 \newtheorem{Thm}{Theorem}[section]
 \newtheorem{Prop}[Thm]{Proposition}
 \newtheorem{Rmk}[Thm]{Remark}
 \newtheorem{Lem}[Thm]{Lemma}
\newcommand{\R}{\mathbb{R}}
\newcommand{\N}{\mathbb{N}}
\newcommand{\bR}{\bm{R}}
\newcommand{\bU}{\bm{U}}
\newcommand{\bV}{\bm{V}}
\newcommand{\cE}{{\mathcal E}}
\newcommand{\A}{{\sf A}}
\newcommand{\D}{{\sf D}}
\newcommand{\M}{{\sf M}}
\renewcommand{\S}{{\sf S}}
\def\baru {\underline u}
\def\barh {\underline h}
\def\barrho {\underline\rho}
\def\barH {\underline H}
\def\barU {\underline U}
\def\barM {\underline M}
\DeclareMathOperator{\Id}{Id}
\DeclareMathOperator{\Fr}{Fr}
\DeclareMathOperator{\dd}{{\rm d}\!}
\DeclareMathOperator*{\esssup}{ess\,sup}
\DeclareMathOperator*{\essinf}{ess\,inf}
\numberwithin{equation}{section}
\begin{document}

\title{Relaxing the sharp density stratification and
	columnar motion assumptions in layered shallow water systems}

\author[1]{Mahieddine Adim}
\author[2]{Roberta Bianchini}
\author[1,3]{Vincent Duchêne}
\affil[1]{IRMAR, Univ. Rennes , F-35000 Rennes, France.}
\affil[2]{IAC, Consiglio Nazionale delle Ricerche, 00185 Rome, Italy.}
\affil[3]{CNRS. Corresponding author: \url{vincent.duchene@univ-rennes.fr}}
\date{\today}

\maketitle 

\begin{abstract}
	We rigorously justify the bilayer shallow-water system as an approximation to the hydrostatic Euler equations in situations where the flow is density-stratified with close-to-piecewise constant density profiles, and close-to-columnar velocity profiles. Our theory accommodates with continuous stratification, so that admissible deviations from bilayer profiles are not pointwise small. This leads us to define refined approximate solutions that are able to describe at first order the flow in the pycnocline. Because the hydrostatic Euler equations are not known to enjoy suitable stability estimates, we rely on thickness-diffusivity contributions proposed by Gent and McWilliams. Our strategy also applies to one-layer and multilayer frameworks.  
\end{abstract}

\section{Introduction}

\paragraph{Motivation} The bilayer shallow water system is a standard model for the description of internal waves in density-stratified flows in situations where the density distribution is such that the fluid can be approximately described as two layers with almost-constant densities separated by a thin a pycnocline; see {\em e.g.} ~\cite[Chap.~6]{Gill82}. In addition to this {\em sharp stratification} assumption, the formal derivation of the bilayer shallow water system relies on two additional ingredients. Firstly, the internal pressure is assumed to be {\em hydrostatic}, that is the pressure-gradient force balances the external force due to gravity. Secondly, the flow velocity is assumed to be {\em columnar}, that is the horizontal velocity of fluid particles is constant with respect to the vertical variable within each layer.
Of course the validity of bilayer shallow water system relies on the expectation that, if originally approximately satisfied, these three assumptions remain accurate on a relevant timescale as the flow evolves.

The rigorous justification of the hydrostatic assumption in the shallow water regime ---that is when the typical horizontal wavelength of the flow is large with respect to the vertical depth of the layer--- has been rigorously analyzed either in situations of homogeneous density~\cite{AzeradGuillen2001,LiTiti2019,FurukawaGigaHieberEtAl20,LiTitiYuan21}, with 
smooth density distributions~\cite{PuZhou21,PuZhou22}, or in the bilayer framework~\cite{BonaLannesSaut08,Duchene10,MM4WW}. In the bilayer framework and assuming that the pressure is hydrostatic, the columnar assumption is propagated exactly by the flow. By this we mean that the bilayer shallow water system produces exact solutions to the hydrostatic (incompressible) Euler equations with a density and horizontal velocity distributions which are piecewise constant with respect to the vertical variable. This statement is made explicit below.

In this work we investigate solutions to the hydrostatic Euler equations in the vicinity of such solutions, that is relaxing the sharp stratification as well as columnar motion assumptions. We prove that for initial data suitably close to the bilayer framework, the emerging solutions to the hydrostatic Euler equations remain close to the solution predicted by the bilayer shallow water system on a relevant timescale.

This task is made difficult in part because we lack good stability estimates for the hydrostatic Euler equations in the presence of density stratification. For that matter, as in our previous works~\cite{BianchiniDuchene,Adim}, we rely on the regularizing properties of thickness-diffusivity terms proposed by Gent and McWilliams~\cite{GentMcWilliams90} so as to model the effective contributions of geostrophic eddy correlations in non-eddy-resolving systems. 

\paragraph{Description of our results} Specifically, the hydrostatic Euler equations we consider take the form
\begin{equation} \label{eq.hydro-intro}
\begin{aligned}
\partial_t  h + \partial_x ((1+  h) (  \baru +  u)) & = \kappa \partial_x^2  h,\\
\partial_t  u + \left( \baru+ u - \kappa \frac{\partial_x  h}{1+ h}\right)\partial_x u +\frac1{\barrho} \partial_x  \Psi &=0, \\
\end{aligned}
\end{equation}
where the Montgomery potential $\Psi$ is given by
\begin{equation}\label{eq.def-mont-intro}
\Psi(t,x,r)= \barrho (r)  \int_{-1}^r   h (t,x, r') \dd r' + \int_r^0  \barrho (r')  h (t,x, r') \dd r'  .
\end{equation}
Here, the equations are formulated using isopycnal coordinates (in particular we assume that the fluid is stratified in the sense that the two-dimensional fluid domain is foliated  through lines of equal density, namely {\em isopycnals}). The variable $h$ represents the deviation of the infinitesimal depth of isopycnals from the reference value $1$, and $ u $ is the deviation of the horizontal velocity of the fluid particles from the reference value $\baru$, and $\barrho$ their density. The unknowns $h,u$ depend on the time $t$, the horizontal space $x$, and the variable $r\in(-1,0)$ referring to the isopycnal line at stake, while $\baru$ and $\barrho$ are given and depend only on $r$. 
The derivation of these equations from the more standard formulation in Eulerian coordinates is described for instance in~\cite{BianchiniDuchene}.%
\footnote{Let us point out that in~\cite{BianchiniDuchene} we choose to label isopycnal lines using the value of the density of fluid particles: $\varrho=\barrho(r)$. Here we use a different convention, so as to set the reference infinitesimal depth of isopycnals at value $\barh(r)=1$. Notice that the change of variable $\varrho=\barrho(r)$ is bijective in the stably stratified situation, {\em i.e.} when $r\mapsto\barrho(r)$ is strictly decreasing, but that our choice in this work allows to consider stratifications that are not strictly monotonic such as homogeneous and layered configurations. Incidentally, let us mention that in~\eqref{eq.hydro-intro} we have set the gravity acceleration to $g=1$ and the total depth of the fluid domain at rest to $\int_{-1}^0\barh(r) \dd r=1$ through suitable rescaling. } 
Finally $\kappa>0$ is the thickness diffusivity coefficient and $u_\star=- \kappa \frac{\partial_x  h}{1+ h}$ is often referred to as the ``bolus velocity''.

The corresponding bilayer shallow water system reads
\begin{equation} \label{eq.SV2-intro}
\begin{cases}
	\partial_t H_s + \partial_x \big((\barH_s +H_s)( \barU_s+U_s)\big) = \kappa\partial_x^2 H_s, \\	
	\partial_t H_b + \partial_x \big((\barH_b +H_b)( \barU_b+U_b)\big) = \kappa\partial_x^2 H_s, \\
	\partial_t U_s + (\barU_s+U_s-\kappa\frac{\partial_x H_s}{\barH_s+H_s})\partial_x U_s +  \partial_x H_s +  \partial_x H_b=0,\\
	\partial_t U_b + (\barU_b+U_b-\kappa\frac{\partial_x H_b}{\barH_b+H_b})\partial_x U_b + \frac{\rho_s}{\rho_b}\partial_x H_s +  \partial_x H_b=0.
\end{cases}
\end{equation}
Here, $H_s$ (resp. $H_b$) represents the deviation of the depth of the upper (resp. lower) layer from the reference constant value $\barH_s$ (resp. $\barH_b$), and $ U_s $ (resp. $U_b$) is the deviation of the horizontal velocity within the upper (resp. lower) layer from the reference constant value $\barU_s$ (resp. $\barU_b$). Notice $H_s,H_b,U_s,U_b$ depend only on the time and horizontal space variables, bringing about a relative ease of use of the bilayer model. We denote $\rho_s$ (resp. $\rho_b$) the constant density of fluid particles in the upper (resp. lower) layers. 
Finally, $\kappa>0$ is again the thickness diffusivity coefficient.

As mentioned above, solutions to~\eqref{eq.SV2-intro} provide exact solutions to~\eqref{eq.hydro-intro}.
Specifically, if we denote
\begin{equation}\label{eq.bilayer-to-continuous-intro}
\begin{aligned}
\barrho_{\rm bl}(r) &= \rho_s {\bf 1}_{(-\barH_s,0)}(r) +\rho_b{\bf 1}_{(-1,-\barH_s)}(r),\\
\baru_{\rm bl}(r) 	&= \barU_s {\bf 1}_{(-\barH_s,0)}(r) +\barU_b{\bf 1}_{(-1,-\barH_s)}(r),\\
u_{\rm bl}(\cdot,r)  		&=  U_s {\bf 1}_{(-\barH_s,0)}(r) + U_b{\bf 1}_{(-1,-\barH_s)}(r), \\
h_{\rm bl}(\cdot,r) 		&= \frac{H_s}{\barH_s} {\bf 1}_{(-\barH_s,0)}(r)  +\frac{H_b}{\barH_b} {\bf 1}_{(-1,\barH_s)}(r),
\end{aligned}
\end{equation}
where $(\rho_s,\rho_b,\barH_s,\barH_b,\barU_s,\barU_b,H_s,H_b,U_s,U_b)$ is a solution to~\eqref{eq.SV2-intro}, then $(\barrho_{\rm bl},\baru_{\rm bl},h_{\rm bl},u_{\rm bl})$ is a solution to~\eqref{eq.hydro-intro}-\eqref{eq.def-mont-intro}. In this work we shall compare these solutions with the ones emerging from profiles satisfying~\eqref{eq.bilayer-to-continuous-intro} only approximately.
Specifically, our results are twofold. 
\begin{enumerate}[i.]
	\item \label{R.1} We prove that strong solutions to the bilayer shallow water system~\eqref{eq.SV2-intro} emerge from sufficiently regular initial data satisfying some hyperbolicity conditions.
	\item \label{R.2}We prove that strong solutions to the hydrostatic Euler equations~\eqref{eq.hydro-intro} emerge from profiles close to the piecewise constant profiles given by~\eqref{eq.bilayer-to-continuous-intro}, and that these solutions remain close to the bilayer solutions.
\end{enumerate}

Notice that to accommodate our aim of comparing solutions with a piecewise constant density distribution with a solution with a continuous density distribution, we need to consider deviations that can be pointwise large in small regions, that is the pycnocline. This demands to weaken the topology measuring the size of deviations in (\ref{R.2}). Yet a control in the strong $L^\infty$ topology associated with a Banach algebra turns out to be necessary to secure suitable convergence estimates. Our strategy then relies on the construction of a refined approximate solution, which is proved to be close to the corresponding solution to the hydrostatic Euler equation in a strong topology, and close to the bilayer solution in a weaker topology. Hence this refined approximate solution improves the description of the exact solution within the pycnocline.

A second important remark is that while the contribution of thickness diffusivity is essential to our stability estimates, we wish to control and compare solutions on a time interval which is uniform with respect to the thickness diffusivity parameter, $0<\kappa\leq 1$. The dependency on the thickness parameter will appear only as a restriction on the size of admissible deviations. Concerning (\ref{R.1}) this is made possible by the well-known fact that the bilayer shallow-water system is well-posed (under some hyperbolicity conditions) when $\kappa=0$ (see~\cite{Ovsjannikov79}). Yet obtaining the corresponding result for $\kappa>0$ is not straightforward and demands to use finely the structure of the thickness diffusivity parameters, following the ``two-velocity'' strategy developed in the context of the BD-entropy (see {\em e.g.}~\cite{BreschDesjardinsZatorska15}). Concerning (\ref{R.2}) we use the existence of the bilayer solution and consequently the existence of the refined approximate solution to bootstrap the control of sufficiently close solutions to the hydrostatic Euler equations on the relevant timescale. For that purpose we strongly use the regularizing effect of thickness diffusivity contributions, but any non-uniformity with respect to the diffusivity parameter, $\kappa$, can be balanced through the smallness of the deviations.

Let us point out that the result (\ref{R.2}) applies to {\em any} given (sufficiently regular with respect to the time and horizontal space variables) solution to the hydrostatic Euler equations. Hence our work provides the same stability estimates around other solutions, constructed for instance in the framework of multiple layers and/or simple waves. 

\paragraph{Related literature} Several existing works discuss the matter of modeling thin pycnoclines through the bilayer framework. Let us first recall that in the bilayer framework, non-hydrostatic pressure contributions trigger strong Kelvin--Helmholtz instabilities that in particular prevent the well-posedness of the initial-value problem in the absence of any additional regularizing ingredients; see~\cite{KamotskiLebeau05}. Such regularizing ingredients include interfacial tension as proved in~\cite{Lannes13} but this is not expected to be te physically relevant mechanism. In~\cite{BoguckiGarrett93}, Bogucki and Garrett describe and model a scenario of interface-thickening due to mixing triggered by shear instabilities, up to a situation where the Richardson number in the interface becomes compatible with the celebrated Miles~\cite{Miles61} and Howard~\cite{Howard61} stability ion. Recall however that the bilayer shallow water system does not suffer from shear-induced instabilities when shear velocities are sufficiently small. Consistently, the authors in~\cite{DesjardinsLannesSaut21} discuss the simultaneous limits of sharp stratification together with shallow water (which can be considered as a low-frequency or hydrostatic pressure limit). In our work we impose the hydrostatic pressure assumption thus taming shear instabilities, although as pointed out previously the stability of continuously stratified hydrostatic Euler equations is poorly understood in the presence of density variations (see~\cite{Teshukov92,Brenier99,Grenier99,MasmoudiWong12} in the homogeneous framework). Notice also that shear-induced instabilities disappear when restricting the framework to purely traveling waves. In this framework, James~\cite{James01} (improving upon~\cite{Turner81,AmickTurner86}) was able to rigorously justify the sharp stratification limit: writing the bilayer and continuously stratified problems in a unified formulation, James proved the existence of internal traveling waves associated with density stratifications in a small neighborhood (according to the $L^2$ topology) of the bilayer framework which converge towards the bilayer solution in the limit of sharp stratification. Our results are in the same spirit: system \eqref{eq.hydro-intro} is our unified formulation, and the topology controlling the limit of sharp stratification in our work is the $L^1_r$ topology. However, our results admit non-trivial dynamics thanks to the hydrostatic assumption and the presence of thickness diffusivity. 

The propagation of internal waves with thin pycnoclines in relation with bilayer models was also investigated through experiments. In particular Grue {\em et al.}~\cite{GrueJensenRusaasEtAl99} set up precise experiments generating large-amplitude solitary waves and reported the dynamical development of rolls on the trailing side of the largest considered waves in accordance with the mechanism promoted by Bogucki and Garrett, while bilayer models provide very accurate predictions otherwise. Almgren, Camassa and Tiron~\cite{AlmgrenCamassaTiron12} investigate thoroughly this matter through careful numerical simulations and analytical results of asymptotic bilayer models, analyzing the triggering of shear-induced instabilities in the region of maximal displacement as well as their advection into stable regions of the flow.  White and Helfrich~\cite{WhiteHelfrich14} consider internal bores generated by a dam-break, and compare continuously stratified and bilayer models with numerical experiments. From their findings they suggest an improvement on existing bilayer theories. In~\cite{CamassaTiron11}, Camassa and Tiron optimize bilayer models (specifically calibrating the top and bottom densities and the position of the sharp interface) and compare the analytical predictions of the optimized bilayer models with respect to the numerically computed continuously stratified solutions ---considering infinitesimally small waves, internal bores and solitary waves--- showing excellent agreement even in situations of relatively thick pycnoclines. Furthermore they propose a new asymptotic model taking into account thin pycnoclines in view of reconstructing analytically local properties of traveling waves within the pycnocline, which is similar in spirit with the ``refined approximate solution'' that we introduce in this work. Notice the authors consider that ``fully time-dependent models governing the evolution of the pycnocline thickness probably constitute one of the most relevant extensions of the model [they] have introduced'', and we believe that our work provides a partial answer in that respect. 

\vspace{-.1cm}
\paragraph{Outline} This manuscript is structured as follows. In Section~\ref{S.SV2} we study the bilayer shallow water systems. We first recall some known results without thickness diffusivity, and then consider the system with thickness diffusivity contributions. The main result is Proposition~\ref{P.uniform-WP} which provides for any sufficiently regular initial data satisfying some hyperbolicity criterion the existence and control of strong solutions to the bilayer system on a time interval which is uniform with respect to $0<\kappa\leq 1$, while Proposition~\ref{P.convergence-SV2} states the strong convergence as $\kappa\searrow 0$. 
In Section~\ref{S.hydro} we study the hydrostatic Euler equations. We provide first some stability estimates with respect to perturbations of the equations and of the data using suitable distances. As a second step we introduce refined approximate solutions associated with some given reference exact solution and close-by profiles. Building upon these approximate solutions, we prove Proposition~\ref{P.convergence-hydro} which controls the difference between the reference solution and exact solutions to the hydrostatic Euler equations emerging from close-by profiles. Together, Proposition~\ref{P.uniform-WP} and  Proposition~\ref{P.convergence-hydro} provide the announced result that for profiles suitably close to the bilayer framework and satisfying some hyperbolicity criterion 
the emerging solutions to the hydrostatic Euler equations remain close to the solution predicted by the bilayer shallow water system on a relevant timescale. The rigorous statement is displayed in Section~\ref{S.conclusion}, completed with a discussion on analogous statements in the one-layer and multilayer frameworks.

\paragraph{Notations} Let us introduce some notations for functional spaces used in this work.
\begin{itemize}
	\item The spaces $L^p(\R)$ are the standard Lebesgue spaces endowed with the usual norms denoted $\|\cdot\|_{L^p}$.
	\item The spaces $W^{k,p}(\R)$ for $k\in\N$ are the $L^p$-based Sobolev spaces endowed with the usual norms denoted $\|\cdot\|_{W^{k,p}}$.
	\item The spaces $H^s(\R)$ for $s\in\R$ are the $L^2$-based Sobolev spaces endowed with the usual norms denoted $\|\cdot\|_{H^s}$.
	\item Given $I$ a real interval and $X$ a Banach space, $L^p(I;X)$ (respectively $C^n(I;X)$) the space of $p$-integrable (respectively $n$-continuously differentiable) $X$-valued functions, endowed with their usual norms. 
	\item When useful, we provide insights on the variables at stake in aforementioned functional spaces by means of subscripts. For instance for $f:(x,r)\in\R\times(-1,0) \mapsto f(x,r)\in\R$ we may denote
	\[ \|f\|_{L^\infty_r H^s_x}  =\esssup\big(\big\{ \|f(\cdot,r)\|_{H^s} \ , \ r\in (-1,0)\}).\]
	\item We sometimes also use subscripts to provide information on the interval at stake in functional spaces. For instance for $T>0$ and $f:(t,x)\in [0,T]\times\R\mapsto f(t,x)\in\R$ we may denote
	\[ \|f\|_{L^\infty_T H^s_x}  =\esssup\big(\big\{ \|f(t,\cdot)\|_{H^s} \ , \ t\in [0,T]\}).\]
\end{itemize}

\section{The bilayer shallow water system}\label{S.SV2}

In this section we analyze the bilayer shallow water system~\eqref{eq.SV2-intro}. We first consider the case without diffusivity ($\kappa=0$) and recall the hyperbolicity analysis due to Ovsjannikov~\cite{Ovsjannikov79}. We complete it by exhibiting explicit symmetrizers of the system of conservation laws. The standard theory for quasilinear systems then provides the local well-posedness of the initial-value problem, that we state in Proposition~\ref{P.WP-SV}.

Extending such result for the system with diffusivity ($\kappa>0$) {\em uniformly with respect to $\kappa\in(0,1]$} is not as obvious as one could naively think, because the aforementioned symmetrizer behaves poorly with respect to diffusivity contributions. In order to deal with this issue, we exhibit regularization effects stemming from the diffusivity contributions that apply to the {\em total velocity} (that is adding the bolus velocities to the velocity unknowns). This is in the spirit of the BD entropy that arose in the context of the barotropic Euler equations with degenerate viscosities (see~\cite{BreschDesjardinsZatorska15}). We infer a stability result on the linearized system, Lemma~\ref{L.estimate-linearized}, which eventually yields the ``large-time'' ---that is uniform with respect to $\kappa\in(0,1]$--- control of solutions stated in Proposition~\ref{P.uniform-WP}, and their strong convergence towards corresponding solutions to the non-diffusive system as $\kappa \searrow0$ stated in Proposition~\ref{P.convergence-SV2}.

\subsection{The system without thickness diffusivity}\label{S.SV2-nondiffusive}
We consider the system
\begin{equation}
\begin{cases}\label{eq.SV2}
	\partial_t H_s + \partial_x ((\barH_s+H_s)(\barU_s+U_s)) = 0, \\
	\partial_t H_b + \partial_x ((\barH_b+H_b)(\barU_b+U_b)) = 0, \\
	\partial_t U_s + (\barU_s+U_s)\partial_x U_s +  \partial_x H_s +  \partial_x H_b=0,\\
	\partial_t U_b + (\barU_b+U_b)\partial_x U_b + \frac{\rho_s}{\rho_b}\partial_x H_s +  \partial_x H_b=0.
\end{cases}
\end{equation}
We shall also always assume $\rho_s\geq0$ and $\rho_b>0$. Through rescaling and Galilean invariance we can assume without loss of generality that ${\barH_s+\barH_b=1}$ and ${\barU_s+\barU_b=0}$. 

In compact form, the system reads
\[ \partial_t \bU + \A(\underline\bU+\bU)\partial_x \bU=0\]
with $\bU:=(H_s,H_b,U_s,U_b)$, $\underline\bU:=(\barH_s,\barH_b,\barU_s,\barU_b)$ and where we introduce the matrix-valued function
\begin{equation}
\A:(H_s,H_b,U_s,U_b) \in\R^4\mapsto\begin{pmatrix}
U_s & 0 & H_s & 0 \\
0 & U_b & 0 & H_b\\
1 & 1 & U_s & 0 \\
 \frac{\rho_s}{\rho_b} & 1  & 0 &  U_b
\end{pmatrix}.
\end{equation}

The following Lemma concerning the hyperbolicity domain of the bilayer shallow water system is proved in~\cite{Ovsjannikov79,BarrosChoi08,VirissimoMilewski20}.
\begin{Lem} \label{L.hyperbolic}
Let $0<\rho_s<\rho_b$ and $\bU:=(H_s,H_b,U_s,U_b) \in\R^4$ be such that that $H_s,H_b>0$. There exist two values $0 < \Fr_- < \Fr_+$ such that the following holds:
\begin{enumerate}
    \item \label{item.hyperbolic} If $|U_b - U_s| < \sqrt{H_b} \Fr_-$, then there exist four distinct real eigenvalues of the matrix $\A(\bU)$.
    \item If $\sqrt{H_b} \Fr_- < |U_b - U_s| < \sqrt{H_b} \Fr_+$, then there exist two distinct real eigenvalues of the matrix $\A(\bU)$ and two distinct complex conjugate eigenvalues.
    \item If $|U_b - U_s| > \sqrt{H_b} \Fr_+$, then there exist four distinct real eigenvalues of the matrix $\A(\bU)$.
\end{enumerate}
Moreover, $\Fr_-$ and $\Fr_+$ depend only and smoothly on $H_s/H_b \in (0,+\infty)$ and $\rho_s/\rho_b \in (0,1)$.
\end{Lem}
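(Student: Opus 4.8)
First I would compute the characteristic polynomial of $\A(\bU)$. Expanding the determinant (say along the last column) one finds the factored expression
\[
P(\lambda) := \det\big(\lambda\Id - \A(\bU)\big) = \big((\lambda-U_s)^2 - H_s\big)\big((\lambda-U_b)^2 - H_b\big) - \frac{\rho_s}{\rho_b}\,H_sH_b .
\]
I would then reduce the number of parameters using the invariances of this quartic: $\lambda\mapsto\lambda+c$ corresponds to $(U_s,U_b)\mapsto(U_s+c,U_b+c)$, $\lambda\mapsto s\lambda$ corresponds to $(U_s,U_b,H_s,H_b)\mapsto(U_s/s,U_b/s,H_s/s^2,H_b/s^2)$, and $\lambda\mapsto-\lambda$ flips the sign of $U_b-U_s$; choosing $c$ and $s=\sqrt{H_b}$ suitably and renaming, the spectral picture depends only on $\Fr:=|U_b-U_s|/\sqrt{H_b}\geq0$, on $\eta:=H_s/H_b$ and on $\nu:=\rho_s/\rho_b$. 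It thus suffices to study the real roots of the one-parameter family $\lambda\mapsto P(\lambda;\Fr)$ (with $\eta,\nu$ fixed) as $\Fr$ runs over $[0,+\infty)$.

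Next I would count real roots through the graph of $F(\lambda):=\big((\lambda-U_s)^2-H_s\big)\big((\lambda-U_b)^2-H_b\big)$, a quartic with positive leading coefficient, since $P=0$ reads $F(\lambda)=\frac{\rho_s}{\rho_b}H_sH_b=:c>0$. One checks directly that $P$ equals $-c<0$ at each of the four ``sonic'' abscissas $U_s\pm\sqrt{H_s}$ and $U_b\pm\sqrt{H_b}$, and $P(\lambda)\to+\infty$ as $\lambda\to\pm\infty$; hence $P$ always has at least two real roots, and the real roots number $4$ or $2$ according to whether the horizontal line $y=c$ does or does not lie below the relevant local maxima of $F$ between the two ``wells'' $\{F<0\}$. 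Separating the case where these wells overlap ($\Fr$ small) from the case where they are disjoint ($\Fr$ large) and tracking the sign of $P$ at each local extremum, one obtains: at $\Fr=0$ the equation $F=c$ becomes, with $z=(\lambda-U_s)^2$, the quadratic $z^2-(H_s+H_b)z+(1-\nu)H_sH_b=0$, which has two positive roots because $0<\nu<1$, so there are four distinct real eigenvalues; for $\Fr$ large the central hump of $F$ exceeds $c$ and again there are four distinct real eigenvalues; and in an intermediate range $y=c$ overshoots that hump, leaving two real eigenvalues and a non-real conjugate pair. Equivalently --- and this is the form I would record --- the discriminant $\Delta(\Fr;\eta,\nu):=\mathrm{disc}_\lambda P(\cdot\,;\Fr)$ is an even polynomial in $\Fr$, and one shows it has exactly two positive roots $\Fr_-<\Fr_+$, with $\Delta>0$ on $[0,\Fr_-)\cup(\Fr_+,+\infty)$ and $\Delta<0$ on $(\Fr_-,\Fr_+)$; since $P$ is never totally complex, $\Delta>0$ forces four real distinct eigenvalues and $\Delta<0$ forces two real and two complex (necessarily simple) eigenvalues, which is exactly the trichotomy of the statement.

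Finally, the smoothness of $\Fr_\pm$ as functions of $(\eta,\nu)=(H_s/H_b,\rho_s/\rho_b)$ follows from the implicit function theorem applied to $\Delta(\cdot\,;\eta,\nu)=0$: the coefficients of $\Delta$ are polynomials in $(\eta,\nu)$, so it is enough that $\Fr_\pm$ be simple roots of $\Delta(\cdot\,;\eta,\nu)$, i.e. that at a transition $P$ has a genuine double root and two simple ones; this reduces to checking $\partial_\lambda^2P\neq0$ at the colliding root, a short computation using $0<\nu<1$.

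The step I expect to be the main obstacle is the real-root count: one must go through all admissible orderings of the four sonic abscissas as $\Fr$ and $\eta$ vary (overlapping versus disjoint wells, and the sub-cases within each), keep track of the sign of $P$ at every local extremum, and verify that as $\Fr$ increases there are \emph{exactly} two sign changes of $\Delta$ --- neither zero nor more --- uniformly in $\eta\in(0,+\infty)$ and $\nu\in(0,1)$. This is precisely the content of the discriminant computations of~\cite{Ovsjannikov79,BarrosChoi08,VirissimoMilewski20} invoked in the statement; once $\Delta$ is known to change sign exactly twice, the realness, distinctness and smoothness assertions follow as above.
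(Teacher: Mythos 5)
Your proposal is correct in substance and is compatible with what the paper actually does: the paper gives \emph{no} proof of Lemma~\ref{L.hyperbolic}, attributing it to Ovsjannikov, Barros--Choi and Vir\'issimo--Milewski, and only records in the subsequent Remark the geometric picture behind $\Fr_\pm$. Your route is a genuinely different framing of the same content. Where the paper characterizes real roots as intersections of the quartic curve $(p_s^2-1)(p_b^2-1)=\rho_s/\rho_b$ with a line of slope $\sqrt{H_s/H_b}$, so that $\Fr_\pm$ appear as the intercepts of the two tangent lines of that slope to the inner oval/branches, you work directly with the graph of $F(\lambda)$ against the level $c=\frac{\rho_s}{\rho_b}H_sH_b$ and with the discriminant $\Delta(\Fr;\eta,\nu)$. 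Your setup is sound: the characteristic polynomial, the reduction to $(\Fr,\eta,\nu)$ by translation/scaling/reflection, the observation that $P=-c<0$ at the four sonic abscissas (hence always at least two real roots, so $\Delta>0$ forces four distinct real eigenvalues and $\Delta<0$ forces the $2+2$ configuration), and the two endpoint checks at $\Fr=0$ (using $0<\nu<1$) and $\Fr\to\infty$ are all correct. The one step you honestly flag as deferred --- that $\Delta$ changes sign \emph{exactly} twice on $(0,\infty)$, uniformly in $\eta$ and $\nu$ --- is exactly the step the paper also outsources to the cited references (geometrically: that there are exactly two tangent intercepts up to sign), so no gap relative to the paper. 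Two small caveats: your smoothness argument needs not only that the colliding root of $P$ is a genuine double root ($\partial_\lambda^2P\neq0$) but also transversality in the parameter, i.e.\ $\partial_{\Fr}P\neq0$ at that root, so that the colliding pair behaves like $\lambda_0\pm\sqrt{a(\Fr-\Fr_\pm)}$ with $a\neq0$ and $\Delta$ has a \emph{simple} zero in $\Fr$; and the case analysis over the orderings of the sonic points (overlapping versus disjoint wells) is where the uniformity in $\eta$ must actually be verified, which the paper's tangent-line picture handles more transparently.
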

\begin{Rmk} Ovsjannikov~\cite{Ovsjannikov79} ---revisited by Barros and Choi~\cite{BarrosChoi08} and then by Vir\'issimo and Milewski~\cite{VirissimoMilewski20}--- provided a nice geometrical approach to the critical values $\Fr_+$ and $\Fr_-$.	The characteristic polynomial associated to $\A(\bU)$ is
	\[ P(\lambda)=\big((U_b-\lambda)^2-H_b\big)\big((U_s-\lambda)^2-H_s\big)- \frac{\rho_s}{\rho_b} H_sH_b.\]
	Notice that $\lambda\in\R$ is a real root of $P$ if and only if $(p_s,p_b):=(\frac{U_s-\lambda}{\sqrt{H_s}},\frac{U_b-\lambda}{\sqrt{H_b}})$ satisfies the following identities:
	\begin{equation}\label{eq.Hyperbolic-geometric}
	\big(p_s^2-1\big)\big(p_b^2-1\big)= \frac{\rho_s}{\rho_b} ,\qquad p_s\sqrt{H_s}-U_s=p_b\sqrt{H_b}-U_b.
	\end{equation}
	The first equality describes a fourth-order curve parametrized by $\rho_s/\rho_b$ having four axes
	of symmetry and consisting of an inner closed curve 
	and four hyperbolic branches 
	and the second equality describes the straight line with slope  $\sqrt{H_s/H_b}$ and intercept $(U_b-U_s)/\sqrt{H_b}$. In this geometrical approach,  $\Fr_-$ and $\Fr_+$ (and their opposite) are the intercepts of the tangents to the fourth-order curves with slope $\sqrt{H_s/H_b}$.
	
	Figure~\ref{F.hyperbolicity} reproduces the aforementioned curves and straight lines for several parameter values.
\end{Rmk}
	\begin{figure}[htb]
		\begin{subfigure}{.33\textwidth}
			\includegraphics[width=\textwidth]{./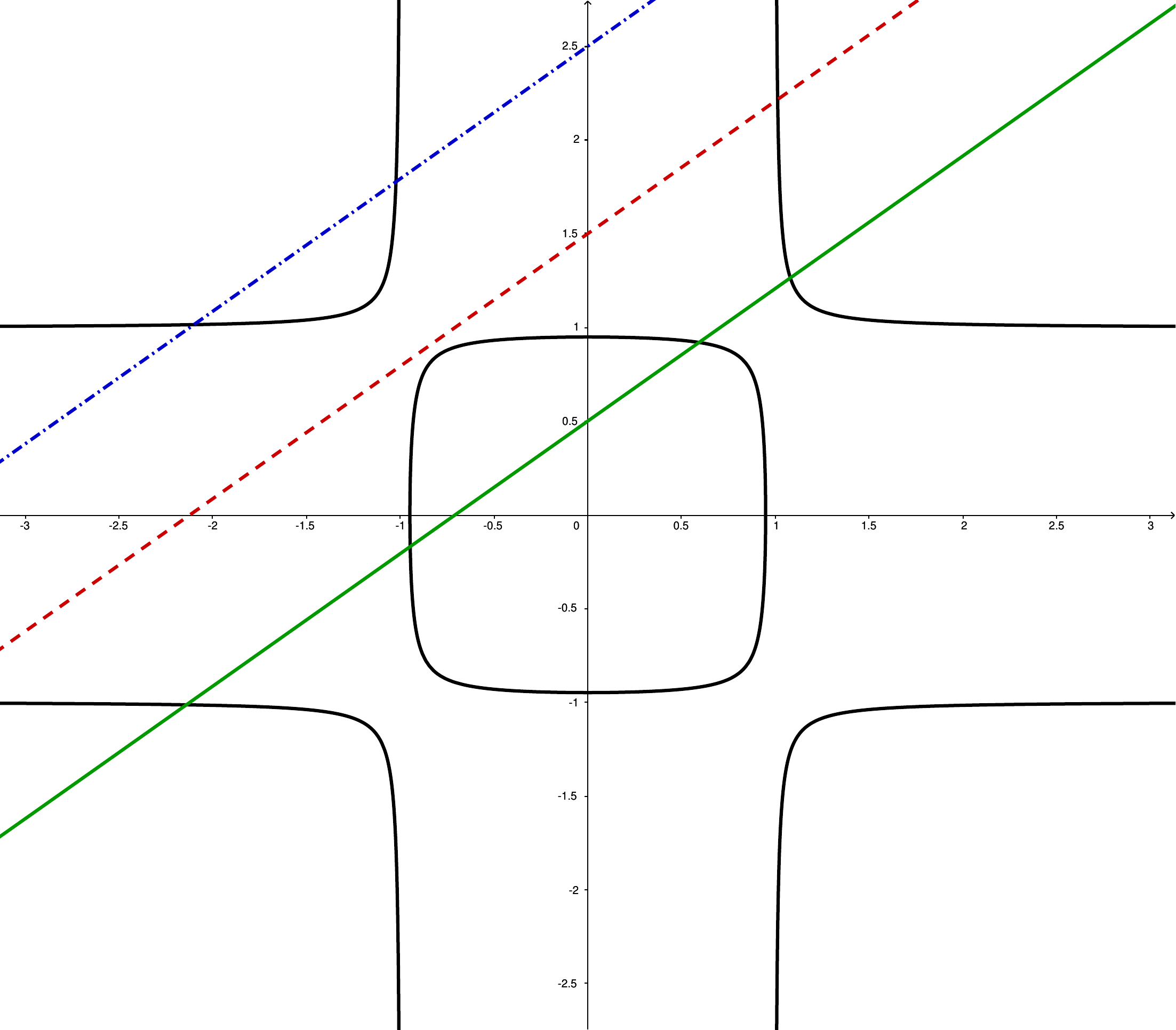}
			\caption{$\rho_s/\rho_b=0.1$}
		\end{subfigure}
		\begin{subfigure}{.33\textwidth}
			\includegraphics[width=\textwidth]{./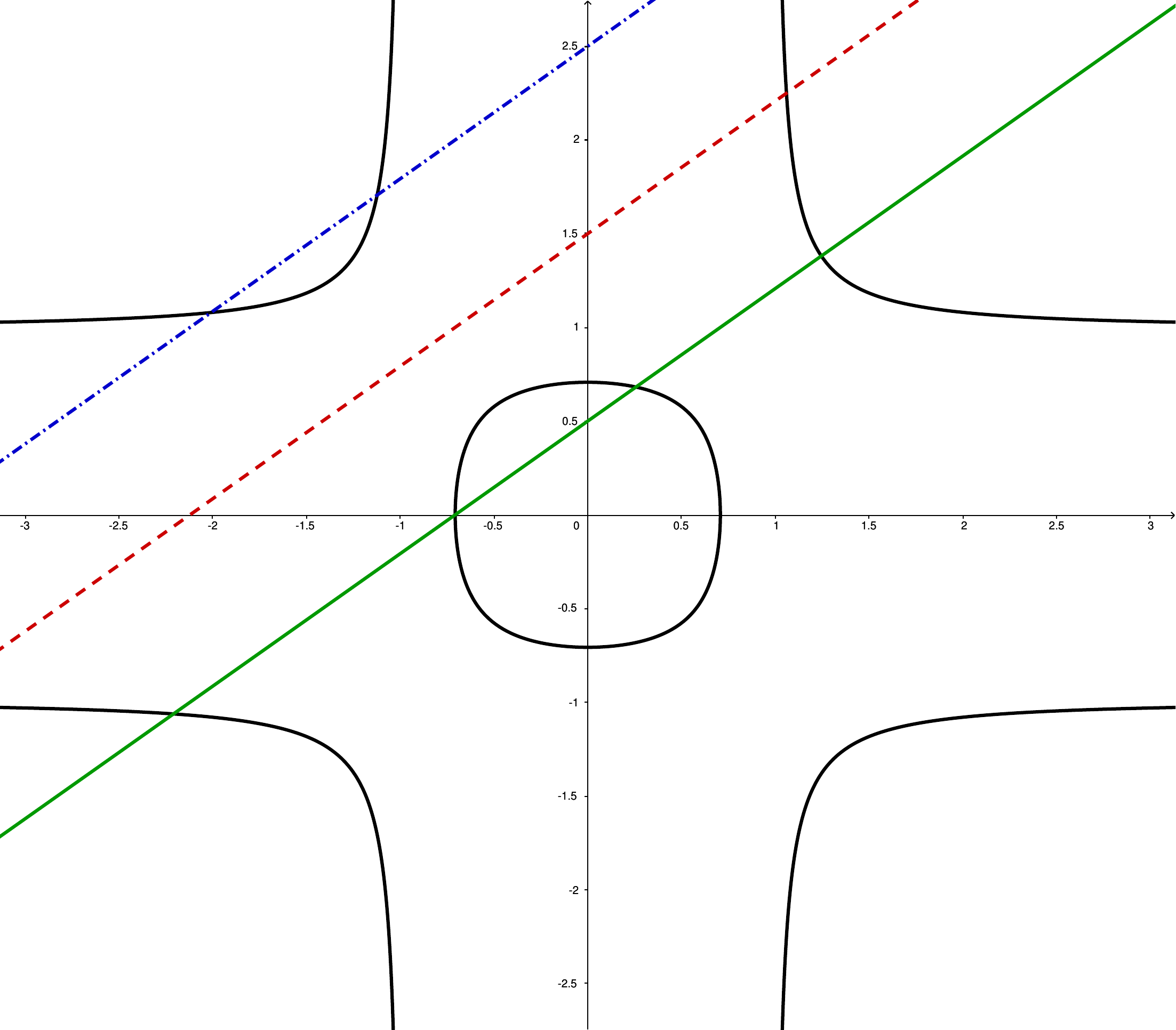}
			\caption{$\rho_s/\rho_b=0.5$}
		\end{subfigure}
		\begin{subfigure}{.33\textwidth}
			\includegraphics[width=\textwidth]{./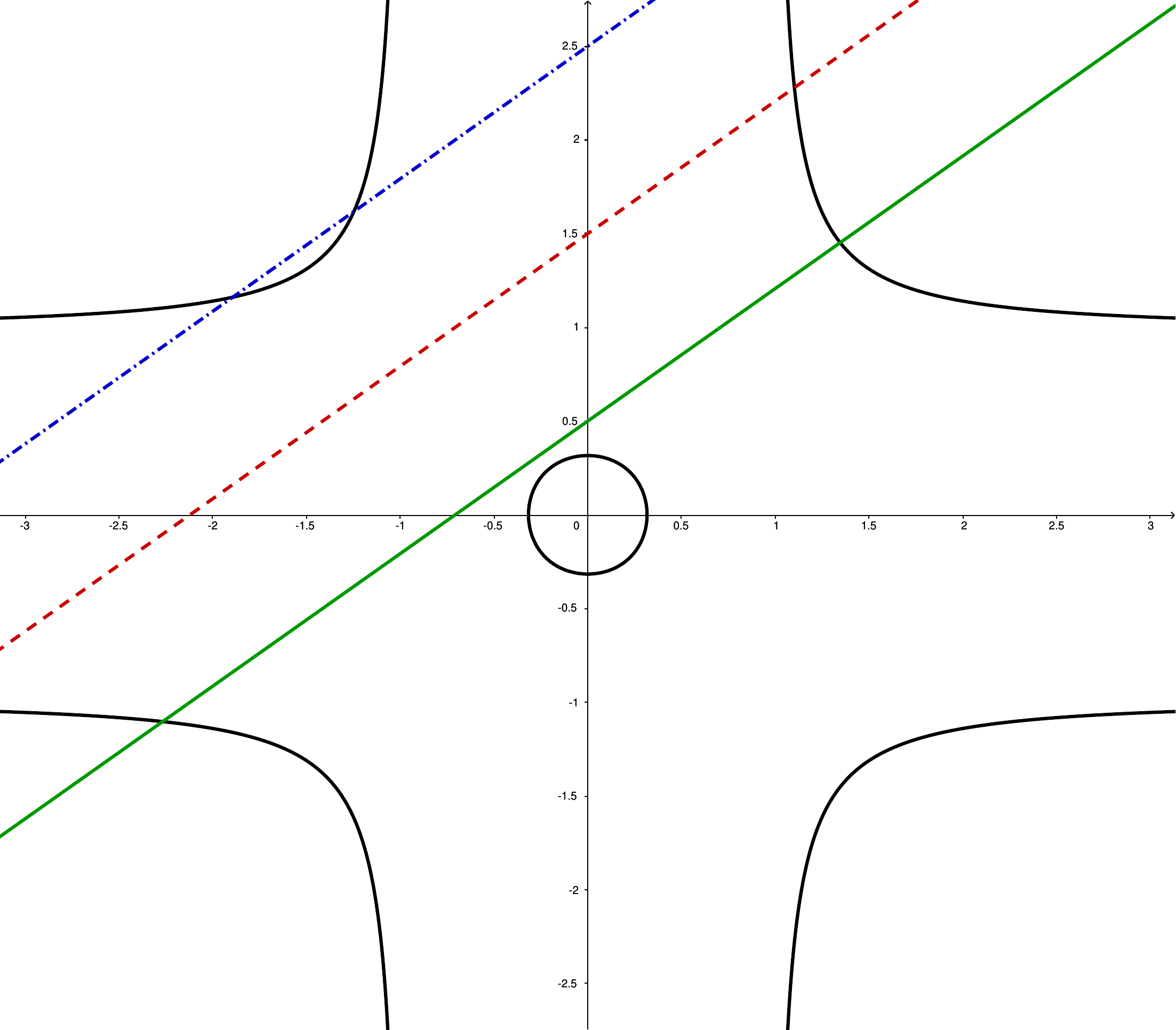}
			\caption{$\rho_s/\rho_b=0.9$}
		\end{subfigure}
		\caption[Geometric interpretation of the hyperbolic domain of the bilayer hydrostatic system]{Solutions to~\eqref{eq.Hyperbolic-geometric} with $H_s=1/3$, $H_b=2/3$, and differnet values for $\rho_s/\rho_b$. Solutions to the quartic equation are in black (plain). Solutions to the linear equation with $(U_b-U_s)/\sqrt{H_b}=1/2$ (green, plain), $(U_b-U_s)/\sqrt{H_b}=3/2$ (red, dashed) and $(U_b-U_s)/\sqrt{H_b}=5/2$ (blue, dot-dashed).}
		\label{F.hyperbolicity}
	\end{figure}

	In this work we restrict our analysis to the hyperbolic domain described by Lemma~\ref{L.hyperbolic}(\ref{item.hyperbolic}). While standard theory for strictly hyperbolic systems guarantees the existence of a symmetrizer to~\eqref{eq.SV2} by using spectral projections~\cite{Benzoni-GavageSerre07}, the following Lemma provides an  (almost) explicit expression for such a symmetrizer.
	
\begin{Lem}\label{L.symmetrizer} Let $0<\rho_s<\rho_b$ and $\bU:=(H_s,H_b,U_s,U_b) \in\R^4$ be such that that $H_s,H_b>0$ and
	\begin{equation}\label{eq.hyperbolic}
		|U_s-U_b|< \sqrt{H_b} \Fr_-
	\end{equation}
	where $\Fr_-=\Fr_-(H_s/H_b,\rho_s/\rho_b)>0$ has been defined in Lemma~\ref{L.hyperbolic}.
	
	There exists $\lambda\in [\min(\{U_s,U_b\}),\max(\{U_s,U_b\})]$ such that, denoting  $U_\ell^\lambda := U_\ell -\lambda\in [-|U_b-U_s|,|U_b-U_s|]$ for $\ell\in\{s,b\}$) the matrix
	\[ \S^\lambda(\bU) := \begin{pmatrix}
		\frac{\rho_s}{\rho_b} & \frac{\rho_s}{\rho_b} & \frac{\rho_s}{\rho_b} U_s^\lambda & 0 \\
		\frac{\rho_s}{\rho_b} & 1 & 0 & U_b^\lambda\\
		\frac{\rho_s}{\rho_b} U_s^\lambda & 0 & \frac{\rho_s}{\rho_b} H_s & 0 \\
		0 & U_b^\lambda  & 0 &  H_b
	\end{pmatrix}\]
	satisfies (i) $\S^\lambda \A$ is symmetric; and (ii) $\S^\lambda$ is symmetric, definite positive.

	Moreover, $\lambda$ can be chosen so that $\frac{\lambda-U_\ell}{\sqrt{H_\ell}} $ (for $\ell\in\{s,b\}$) depends only and smoothly on $H_s/H_b>0$, $ \rho_s/\rho_b\in(0,1)$, and $(U_b-U_s)/ \sqrt{H_b}\in(-\Fr_-,\Fr_-) $.
\end{Lem}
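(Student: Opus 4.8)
The matrix $\S^\lambda$ is handed to us, so only verification is required; I would organise the proof in three steps. First I dispose of the symmetry requirement (i), which I expect to hold for \emph{every} $\lambda\in\R$. Since $\S^\lambda$ is manifestly symmetric, $\S^\lambda\A$ is symmetric iff $\S^\lambda\A^\lambda$ is, where $\A^\lambda:=\A-\lambda\Id$. Writing everything in $2\times 2$ blocks (variables ordered $H_s,H_b,U_s,U_b$), with $\gamma:=\rho_s/\rho_b$, $\Gamma:=\mathrm{diag}(\gamma,1)$, $G:=\left(\begin{smallmatrix}1&1\\\gamma&1\end{smallmatrix}\right)$, $D:=\mathrm{diag}(H_s,H_b)$, $V^\lambda:=\mathrm{diag}(U_s^\lambda,U_b^\lambda)$, one has $\A^\lambda=\left(\begin{smallmatrix}V^\lambda&D\\ G&V^\lambda\end{smallmatrix}\right)$ and $\S^\lambda=\left(\begin{smallmatrix}\Gamma G&\Gamma V^\lambda\\ V^\lambda\Gamma&\Gamma D\end{smallmatrix}\right)$, and the symmetry of $\S^\lambda\A^\lambda$ then follows from a short block computation using only that $\Gamma G=G^{\sf T}\Gamma$ is symmetric and that $\Gamma D$, $V^\lambda$, $D$ are diagonal. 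Thus $\lambda$ is needed solely to secure positivity (ii).

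For (ii) I would apply Sylvester's criterion to $\S^\lambda$ in the ordering $(H_s,H_b,U_s,U_b)$: the leading principal minors are $\gamma$, then $\gamma(1-\gamma)$, then $\gamma^2((1-\gamma)H_s-(U_s-\lambda)^2)$, then $\gamma^2 P(\lambda)$, where $P$ is the characteristic polynomial from the Remark (a direct expansion gives $P(\lambda)=((U_b-\lambda)^2-H_b)((U_s-\lambda)^2-H_s)-\gamma H_sH_b$). The first two are positive because $0<\gamma<1$, and the third is automatic once the fourth holds together with $(U_s-\lambda)^2<H_s$: with $p_\ell:=(U_\ell-\lambda)/\sqrt{H_\ell}$, $P(\lambda)>0$ reads $(1-p_s^2)(1-p_b^2)>\gamma$, so $p_s^2<1\Rightarrow 1-p_b^2>\gamma\Rightarrow p_b^2<1\Rightarrow 1-p_s^2>\gamma$. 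Hence $\S^\lambda$ is symmetric positive definite exactly when $P(\lambda)>0$ and $(U_s-\lambda)^2<H_s$; and since $P$ is a quartic with positive leading coefficient whose real roots, in the regime~\eqref{eq.hyperbolic}, are two ``subsonic'' roots $\lambda_-<\lambda_+$ (with $p_s^2,p_b^2<1$) and two ``supersonic'' roots lying outside $(U_s-\sqrt{H_s},U_s+\sqrt{H_s})$, this is equivalent to $\lambda\in(\lambda_-,\lambda_+)$.

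It then remains to check $(\lambda_-,\lambda_+)\cap[\min(\{U_s,U_b\}),\max(\{U_s,U_b\})]\neq\emptyset$ and to make the choice smooth. I would argue geometrically as in the Remark: $(\lambda_-,\lambda_+)$ corresponds to the open chord $L\cap\Omega$, with $L$ the line $p_b=\sqrt{H_s/H_b}\,p_s+(U_b-U_s)/\sqrt{H_b}$ and $\Omega$ the bounded component of $\{(1-p_s^2)(1-p_b^2)>\gamma\}$, nonempty by~\eqref{eq.hyperbolic}; moreover $\Omega\subset(-1,1)^2$, hence $\Omega\subset(-\sqrt{1-\gamma},\sqrt{1-\gamma})^2$ because there $(1-p_s^2)(1-p_b^2)\le 1-p_\ell^2$. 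The axis intercepts of $L$ are $(0,(U_b-U_s)/\sqrt{H_b})$ (where $\lambda=U_s$) and $(-(U_b-U_s)/\sqrt{H_s},0)$ (where $\lambda=U_b$), and $[\min(\{U_s,U_b\}),\max(\{U_s,U_b\})]$ corresponds to the sub-segment of $L$ between them. So: if $(U_b-U_s)^2<(1-\gamma)H_b$ the first intercept lies in $\Omega$ and $\lambda=U_s$ works; if $(U_b-U_s)^2<(1-\gamma)H_s$, then $\lambda=U_b$ works; and otherwise, i.e.\ $(U_b-U_s)^2\ge(1-\gamma)\max(H_s,H_b)$, the confinement of $\Omega$ forces the entire chord $L\cap\Omega$ into that sub-segment (from $|p_b|<\sqrt{1-\gamma}$ one gets $p_s<0$, and from $|p_s|<\sqrt{1-\gamma}$ one gets $p_s>-(U_b-U_s)/\sqrt{H_s}$, up to the reflection $(p_s,p_b)\mapsto(-p_s,-p_b)$), so $\lambda=\tfrac12(\lambda_-+\lambda_+)$ works. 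These cases exhaust~\eqref{eq.hyperbolic}. For the ``moreover'' part, after normalizing $H_s+H_b=1$, $U_s+U_b=0$ via the scaling/Galilean invariances, the admissible $\lambda$'s form a nonempty open interval whose endpoints depend continuously on $(H_s/H_b,\rho_s/\rho_b,(U_b-U_s)/\sqrt{H_b})$ (the subsonic roots being simple roots of $P$, hence smooth by the implicit function theorem), from which a smooth selection is obtained by gluing locally constant choices with a partition of unity — legitimate because the set of good $\lambda$'s is convex.

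The main difficulty is this last step: converting the bound~\eqref{eq.hyperbolic} into information on the position of $\lambda_\pm$ relative to $U_s$ and $U_b$ genuinely uses the shape of the hyperbolicity domain — the confinement $\Omega\subset(-\sqrt{1-\gamma},\sqrt{1-\gamma})^2$ — and not merely strict hyperbolicity, and one must in the end exhibit a \emph{single} $\lambda$ that simultaneously lies in $[\min(U_s,U_b),\max(U_s,U_b)]$, keeps $\det\S^\lambda=\gamma^2P(\lambda)$ bounded below in terms of the gap in~\eqref{eq.hyperbolic} (so that $\S^\lambda$ is usable as a symmetrizer in the energy estimates of the next sections), and depends smoothly on the three dimensionless parameters; the gluing of the three sub-cases therefore has to be carried out with some care.
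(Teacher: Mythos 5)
Your proof is correct and follows essentially the same route as the paper's: direct verification of the symmetry of $\S^\lambda$ and $\S^\lambda\A$ for every $\lambda$, Sylvester's criterion reducing positive definiteness to $P(\lambda)>0$ together with the subsonic condition (i.e.\ $(p_s^\lambda,p_b^\lambda)$ lying on the inner oval, so $\lambda\in(\lambda_2,\lambda_3)$), and then a choice of $\lambda$ between $U_s$ and $U_b$. The only difference is that you supply an explicit three-case argument, via the confinement $\Omega\subset(-\sqrt{1-\gamma},\sqrt{1-\gamma})^2$, for the existence of such a $\lambda$ and for the smooth selection --- a step the paper merely asserts --- and this added detail is sound.
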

\begin{proof}
	It is straightforward to check that $\S^\lambda$ and $\S^\lambda \A$ are symmetric (and real-valued) for any value of $\lambda\in\R$. In order to prove that $\S^\lambda$ is definite positive for a suitable choice of $\lambda$, we rely on Sylvester's criterion. 
	We obtain the requirements
	\[ H_b>0 ,\quad \frac{\rho_s}{\rho_b} H_sH_b>0, \quad \frac{\rho_s}{\rho_b} H_sH_b-\frac{\rho_s}{\rho_b} H_s(U_b^\lambda)^2>0\]
	 and
	 \[\big(\frac{\rho_s}{\rho_b}\big)^2\Big( \big((U_b^\lambda)^2-H_b\big)\big((U_s^\lambda)^2-H_s\big)-\frac{\rho_s}{\rho_b} H_sH_b\Big)>0.\]
	 The last inequality is equivalent to $P(\lambda)>0$ where $P$ is the aforementioned characteristic polynomial. By Lemma~\ref{L.hyperbolic}, under the condition~\eqref{eq.hyperbolic} there are four distinct real roots to $P$, which we can denote $\lambda_1<\lambda_2<\lambda_3<\lambda_4$ and $P(\lambda)>0$ for any $\lambda\in(\lambda_2,\lambda_3)$. Moreover for all $\lambda\in (\lambda_2,\lambda_3)$, $(p_s^\lambda,p_b^\lambda):=(\frac{U_s-\lambda}{\sqrt{H_s}},\frac{U_b-\lambda}{\sqrt{H_b}})$ belongs to the domain delimited by the inner closed curve, and in particular we have $(p_\ell^\lambda)^2<1$. Hence we find that  for all $\lambda\in(\lambda_2,\lambda_3)$ all principal minors are positive, and hence $\S^\lambda$ is definite positive. 
	 
	 By the standard perturbation theory~\cite{Kato95}, $p_\ell^\lambda$ for $\ell\in\{s,b\}$ and $\lambda\in \{\lambda_2,\lambda_3\}$ depend smoothly on  $H_s/H_b>0$, $ \rho_s/\rho_b\in(0,1)$ and $(U_b-U_s)/ \sqrt{H_b}\in(-\Fr_-,\Fr_-)$. What is more, we can always choose (smoothly) $\lambda \in(\lambda_2,\lambda_3)$ so that $p_s^\lambda >0$ and $p_b^\lambda<0$ when $U_b<U_s$, or $p_s^\lambda <0$ and $p_b^\lambda>0$ when $U_b>U_s$, which corresponds to enforcing $\lambda\in [\min(\{U_s,U_b\}),\max(\{U_s,U_b\})]$. This concludes the proof.
\end{proof}

The following proposition follows from the standard theory on strictly hyperbolic systems (see {\em e.g.}~\cite{Benzoni-GavageSerre07}).
For convenience, we define for $\varsigma\in(0,1)$ a compact subset of the domain of strict hyperbolicity as
\begin{multline}\label{eq.hyperbolicity-condition} \mathfrak p^\varsigma:=\big\{(\rho_s,\rho_b, H_s, H_b,U_s,U_b)\in\R^6 \ :  \\ 
	\varsigma/2\leq\rho_s/\rho_b\leq1-\varsigma/2, \quad \varsigma\leq H_s/H_b \leq \varsigma^{-1}, \quad H_s+H_b\geq \varsigma, \quad \Fr_--\tfrac{|U_b-U_s|}{\sqrt{H_b}}\geq \varsigma \big\} 
\end{multline}
where $\Fr_-=\Fr_-(\rho_s/\rho_b,H_s/H_b)$ is defined in Lemma~\ref{L.hyperbolic}.

\begin{Prop}[Well-posedness]\label{P.WP-SV}
	Let $s\geq s_0> 3/2$, $\varsigma>0$ and $M_0>0$. There exist $C>0$ and $T>0$ such that the following holds. 
	
	For all $(\rho_s,\rho_b, \barH_s,\barH_b,\barU_s,\barU_b)\in\R^6$ such that $\barH_s+\barH_b=1$ and $\barU_s+\barU_b=0$ and $( H_s^0, H_b^0, U_s^0, U_b^0)\in H^s(\R)^4$ such that 
	\[\forall x\in\R,\quad  (\rho_s,\rho_b, \barH_s+ H_s^0(x) , \barH_b+ H_b^0(x),\barU_s+ U_s^0(x),\barU_b+ U_b^0(x))\in\mathfrak p^\varsigma\]
	and
	\[ \| ( H_s^0, H_b^0, U_s^0, U_b^0)\|_{H^{s_0}}\leq M_0\]
	there exists a unique $( H_s, H_b, U_s, U_b)\in C([0,T^\star);H^s(\R)^4)\cap C^1([0,T^\star);H^{s-1}(\R)^4)$  maximal-in-time (classical) solution to~\eqref{eq.SV2} emerging from the initial data $(H_s,H_b,U_s,U_b)\big\vert_{t=0}=( H_s^0, H_b^0, U_s^0, U_b^0)$.
	
	Moreover, one has $T^\star> T/M_0$ and for any $t\in[0,T/M_0]$ one has
		 	\[ \forall x\in\R,\quad  ( \rho_s,\rho_b,\barH_s+ H_s(t,x), \barH_b + H_b(t,x), \barU_s+ U_s(t,x),\barU_b+ U_b(t,x)) \in \mathfrak p^{\varsigma/2}\]
		 	and
	\[  \| ( H_s(t,\cdot), H_b(t,\cdot), U_s(t,\cdot), U_b(t,\cdot))\|_{H^s}\leq C  \| ( H_s^0, H_b^0, U_s^0, U_b^0)\|_{H^{s}}.\]

Moreover, the maximal existence time (resp. the emerging solution in $C([0,T^\star);H^s(\R)^4)$) is a lower semi-continuous (resp. continuous) function of the initial data in $H^s(\R)^4$ and if $T^\star<\infty$ then
\[ \| ( H_s(t,\cdot), H_b(t,\cdot), U_s(t,\cdot), U_b(t,\cdot))\|_{H^{s_0}}\to \infty \text{ as } t\to T^\star.\]
\end{Prop}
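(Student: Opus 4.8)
The plan is to deduce Proposition~\ref{P.WP-SV} from the general theory of symmetrizable strictly hyperbolic quasilinear systems, using the explicit symmetrizer constructed in Lemma~\ref{L.symmetrizer}. First I would observe that the system $\partial_t \bU + \A(\underline\bU+\bU)\partial_x\bU=0$ is equivalent, after left-multiplication by $\S^\lambda(\underline\bU+\bU)$, to the symmetric quasilinear form $\S(\bU)\partial_t\bU + \widetilde\A(\bU)\partial_x\bU=0$ where $\S(\bU):=\S^\lambda(\underline\bU+\bU)$ and $\widetilde\A(\bU):=\S^\lambda(\underline\bU+\bU)\A(\underline\bU+\bU)$ are both symmetric by Lemma~\ref{L.symmetrizer}(i)--(ii), with $\S$ uniformly positive definite as long as the state $(\rho_s,\rho_b,\barH_s+H_s,\barH_b+H_b,\barU_s+U_s,\barU_b+U_b)$ stays in the compact hyperbolicity set $\mathfrak p^{\varsigma/2}$. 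Here one must use the last sentence of Lemma~\ref{L.symmetrizer}: since $(\lambda-U_\ell)/\sqrt{H_\ell}$ depends smoothly on the (bounded, away from degeneracy) ratios, the choice of $\lambda$ — hence the entries of $\S^\lambda$ and $\S^\lambda\A$ — is a smooth function of $\bU$ on a neighborhood of the relevant compact set, and the smallest eigenvalue of $\S$ is bounded below by a constant depending only on $\varsigma$.

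Next I would set up the a priori energy estimate in $H^s$, $s>3/2$. Applying $\partial_x^\alpha$ for $|\alpha|\le s$ (or a Bessel potential $\langle D\rangle^s$), pairing with $\S(\bU)\partial_x^\alpha\bU$ in $L^2$, and using symmetry of $\widetilde\A$ together with the standard commutator (Kato--Ponce / Moser) estimates in the Banach algebra $H^{s_0}$, $s_0>3/2$, yields
\begin{equation*}
\frac{\dd}{\dd t}\big(\S(\bU)\partial_x^\alpha\bU,\partial_x^\alpha\bU\big)_{L^2} \le C\big(\|\bU\|_{H^{s_0}}\big)\,\|\bU\|_{H^s}^2,
\end{equation*}
where the constant depends on $\varsigma$ through the lower bound for $\S$ and the bound on $\partial_t\S = (\partial_\bU\S)(\bU)\cdot\partial_t\bU$, itself controlled via the equation by $\|\partial_x\bU\|_{L^\infty}\lesssim\|\bU\|_{H^{s_0}}$. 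Since $\S$ is equivalent to the identity, this gives a Gronwall-type differential inequality $\frac{\dd}{\dd t}\|\bU\|_{H^s}^2 \le C(\|\bU\|_{H^{s_0}})\|\bU\|_{H^s}^2$, and in particular $\frac{\dd}{\dd t}\|\bU\|_{H^{s_0}} \le C(\|\bU\|_{H^{s_0}})\|\bU\|_{H^{s_0}}$, a closed scalar inequality. A continuity argument then shows that if $\|\bU^0\|_{H^{s_0}}\le M_0$ the solution persists and stays of size $\lesssim M_0$ on a time interval of length $\gtrsim 1/M_0$ (the $1/M_0$ scaling is exactly what a superlinear-but-locally-Lipschitz right-hand side produces), which gives $T^\star>T/M_0$; one simultaneously propagates the pointwise constraint, upgrading $\mathfrak p^\varsigma$ at $t=0$ to $\mathfrak p^{\varsigma/2}$ for $t\le T/M_0$ by choosing $T$ small (using $\|\bU(t)-\bU^0\|_{L^\infty}\lesssim t\,\|\partial_t\bU\|_{L^\infty_T L^\infty}\lesssim t\,C(M_0)$ and the openness/smoothness of the defining inequalities of $\mathfrak p$). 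The higher $H^s$ bound then follows from the linear-in-$\|\bU\|_{H^s}^2$ inequality and Gronwall once the $H^{s_0}$ norm is controlled, giving the stated constant $C$.

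For existence, uniqueness, blow-up criterion and continuous dependence I would invoke the classical Friedrichs/Kato well-posedness theorem for symmetric hyperbolic systems (as in Benzoni-Gavage--Serre~\cite{Benzoni-GavageSerre07}): mollify the equation, obtain uniform $H^s$ bounds from the above energy estimate, pass to the limit, prove uniqueness by an $L^2$ (or $H^{s_0-1}$) estimate on the difference of two solutions using the same symmetrizer, and obtain the blow-up alternative $\|\bU(t)\|_{H^{s_0}}\to\infty$ as $t\to T^\star<\infty$ from the fact that the $H^s$ norm is controlled as long as $\|\bU\|_{H^{s_0}}$ stays finite and the state stays in the hyperbolicity domain — here one also needs that the state cannot exit the hyperbolicity domain without $\|\bU\|_{H^{s_0}}$ (equivalently $\|\bU\|_{L^\infty}$) blowing up, which holds because the complement of $\bigcup_{\varsigma>0}\mathfrak p^\varsigma$ inside the region $H_s,H_b>0$ corresponds to large shear $|U_b-U_s|$, controlled by $\|\bU\|_{L^\infty}$. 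Lower semi-continuity of $T^\star$ and continuity of the solution map are standard consequences of the energy method combined with the uniqueness estimate.

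The main obstacle is not the abstract machinery but making the symmetrizer genuinely usable: one must verify that $\lambda=\lambda(\bU)$ from Lemma~\ref{L.symmetrizer} can be chosen as a $C^\infty$ (or at least $C^{s_0}$) function of $\bU$ on a full neighborhood of the compact set $\{(\barH_s,\barH_b,\barU_s,\barU_b)+\bU : \bU\in\mathfrak p^{\varsigma/2}\text{-admissible}\}$ — this is where the smooth-dependence clause at the end of Lemma~\ref{L.symmetrizer} is essential — and that all the constants (lower bound for $\S$, upper bounds for $\S$, $\widetilde\A$, and their $\bU$-derivatives) are uniform over $\mathfrak p^{\varsigma}$, with only $\varsigma$ and $M_0$ entering. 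The rest is a routine, if somewhat lengthy, application of the quasilinear symmetric-hyperbolic toolkit, and I would present it by citing~\cite{Benzoni-GavageSerre07} for the parts that are verbatim standard.
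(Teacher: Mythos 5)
Your proposal is correct and follows exactly the route the paper takes: the paper's entire ``proof'' of Proposition~\ref{P.WP-SV} is the sentence preceding it, namely that the result follows from the standard theory of strictly hyperbolic quasilinear systems (citing Benzoni-Gavage--Serre) once Lemma~\ref{L.hyperbolic} gives strict hyperbolicity on $\mathfrak p^\varsigma$ and Lemma~\ref{L.symmetrizer} supplies a smooth, uniformly positive definite symmetrizer, and your write-up is precisely that argument spelled out (energy estimates with $\S^\lambda$, commutator bounds, Gronwall, propagation of the pointwise constraint, Friedrichs/Kato machinery). The only point I would flag is your justification of the blow-up alternative: a bounded state can approach the \emph{boundary} of the hyperbolicity domain (e.g.\ $\Fr_--|U_b-U_s|/\sqrt{H_b}\to 0$) without any norm becoming large, so ``exiting $\bigcup_\varsigma\mathfrak p^\varsigma$ requires large $\|\bU\|_{L^\infty}$'' is not accurate — but this subtlety is equally glossed over in the paper's statement.
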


\subsection{The system with diffusivity}\label{S.SV2-diffusive}

We now consider the system
\begin{equation}
\begin{cases}\label{eq.SV2-kappa}
	\partial_t H_s + \partial_x ((\barH_s+H_s)(\barU_s+U_s)) = \kappa\partial_x^2 H_s, \\
	\partial_t H_b + \partial_x ((\barH_b+H_b)(\barU_b+ U_b)) = \kappa\partial_x^2 H_b, \\
	\partial_t U_s + (\barU_s+U_s-\kappa\frac{\partial_x H_s}{\barH_s+H_s})\partial_x U_s +  \partial_x H_s +  \partial_x H_b=0,\\
	\partial_t U_b + (\barU_b+U_b-\kappa\frac{\partial_x H_b}{\barH_b+H_b})\partial_x U_b + \frac{\rho_s}{\rho_b}\partial_x H_s +  \partial_x H_b=0.
\end{cases}
\end{equation}

\begin{Prop}[Small time well-posedness]\label{P.WP-small-time-SV2}
Let $s\geq s_0>3/2$, $\varsigma\in(0,1)$, $M_0>0$ and $c>1$. There exists $T>0$ such that the following holds.

For all  $\kappa>0$, for all $(\rho_s,\rho_b, \barH_s,\barH_b,\barU_s,\barU_b)\in\R^6$ such that $\barH_s+\barH_b=1$ and $\barU_s+\barU_b=0$ and for all $( H_s^0, H_b^0, U_s^0, U_b^0)\in H^s(\R)^4$ such that 
\[ 0\leq \rho_s/\rho_b\leq \varsigma^{-1} \quad \text{ and } \quad \forall x\in\R, \quad    \barH_s+ H_s^0(x) \geq \varsigma ,\quad   \barH_b+ H_b^0(x)\geq \varsigma\]
and
\[ \| ( H_s^0, H_b^0, U_s^0, U_b^0)\|_{H^{s_0}}\leq M_0\]
 there exists a unique $( H_s, H_b, U_s, U_b)\in C([0,T^\star);H^s(\R)^4)$  maximal-in-time strong solution to~\eqref{eq.SV2-kappa} emerging from the initial data $(H_s,H_b,U_s,U_b)\big\vert_{t=0}=( H_s^0, H_b^0,U_s^0, U_b^0)$.
 
 Moreover, $T^\star> \kappa T$ and for any $t\in[0,\kappa T]$ one has
 	 	\[ \forall x\in\R,\quad    \barH_s+ H_s(t,x) \geq \varsigma/c ,\quad   \barH_b+ H_b(t,x)\geq \varsigma/c\]
 	 	and
\[ \max(\{ \| ( H_s, H_b, U_s, U_b)\|_{L^\infty(0,t;H^s)},\kappa^{1/2} \| (\partial_x  H_s,\partial_x  H_b)\|_{L^2(0,t;H^s)}\})\leq c  \| ( H_s^0, H_b^0, U_s^0, U_b^0)\|_{H^{s}}.\]

	Moreover, the maximal existence time (resp. the emerging solution in $C([0,T^\star);H^s(\R)^4)$) is a lower semi-continuous (resp. continuous) function of the initial data in $H^s(\R)^4$ and if $T^\star<\infty$ then
	\[ \| ( H_s(t,\cdot), H_b(t,\cdot), U_s(t,\cdot), U_b(t,\cdot))\|_{H^{s_0}}\to \infty \text{ as } t\to T^\star.\]
\end{Prop}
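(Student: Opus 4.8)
The plan is to prove Proposition~\ref{P.WP-small-time-SV2} by a standard parabolic-regularization/fixed-point argument, exploiting that the system~\eqref{eq.SV2-kappa} is, for fixed $\kappa>0$, a quasilinear parabolic system in $(H_s,H_b)$ coupled to transport equations in $(U_s,U_b)$. The rescaling of time by $\kappa$ in the statement is the telltale sign that one should \emph{not} try to be uniform in $\kappa$ here (that is the job of Proposition~\ref{P.uniform-WP}); instead one substitutes $t=\kappa\tau$, which turns $\kappa\partial_x^2$ into $\partial_x^2$ and the convective/pressure terms into $\kappa$-small terms, so the analysis becomes that of a fixed parabolic system on an $O(1)$ time interval in $\tau$ with coefficients bounded independently of $\kappa\in(0,1]$ (and one may as well treat all $\kappa>0$ by the same substitution, the $\kappa>1$ case only making the perturbation larger but still fine on a short $\tau$-interval).

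Concretely, I would proceed as follows. First, reformulate the $(H_s,H_b)$-equations as $\partial_\tau H_\ell - \partial_x^2 H_\ell = \kappa\, F_\ell(\bU,\partial_x\bU)$ and the $(U_s,U_b)$-equations as transport equations $\partial_\tau U_\ell + \kappa\, a_\ell(\bU,\partial_x\bU)\,\partial_x U_\ell = \kappa\, G_\ell(\partial_x\bU)$ with $a_\ell$ involving the bolus velocity $\bar U_\ell+U_\ell-\kappa\partial_x H_\ell/(\bar H_\ell+H_\ell)$ — note the lower bound $\bar H_\ell+H_\ell\ge\varsigma$ is exactly what keeps these coefficients well-defined and controlled. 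Second, set up the iteration: given $\bU^{(n)}$ in a ball of the space
$$ X_T := \{ \bU \in C([0,T];H^s) \ :\ \|\bU\|_{L^\infty_T H^s_x} + \kappa^{1/2}\|(\partial_x H_s,\partial_x H_b)\|_{L^2_T H^s_x} \le R \},$$
solve the decoupled linear problems for $\bU^{(n+1)}$ — the heat equation with source for $H_\ell^{(n+1)}$ using maximal $L^2_\tau H^{s+1}_x$ (equivalently the stated $\kappa^{1/2}\partial_x$-gain after undoing the rescaling) regularity, and the linear transport equation with $H^s$-coefficients for $U_\ell^{(n+1)}$ using the standard energy estimate for transport (which needs $s>3/2$ so $H^s\hookrightarrow W^{1,\infty}$ and the commutator estimates apply). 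Third, close the a~priori bound $\|\bU^{(n+1)}\|_{X_T}\le R$ for $T$ small depending only on $s_0,\varsigma,M_0,c$ (here use that the nonlinear source terms are tame in $H^{s_0}\cap H^s$ by Moser-type product/composition estimates, that $\kappa\le 1$ absorbs constants after rescaling, and that the lower bound on $\bar H_\ell+H_\ell$ degrades at most to $\varsigma/c$ on $[0,\kappa T]$ by continuity since $\partial_t H_\ell\in L^2_T H^{s-1}_x \hookrightarrow L^1_T L^\infty_x$). Fourth, prove contraction in the low-norm space $C([0,T];H^{s-1})\cap (\text{$\kappa^{1/2}\partial_x$ in }L^2_T H^{s-1}_x)$ by writing the equations for differences $\bU^{(n+1)}-\bU^{(n)}$ and estimating exactly as above, gaining a small factor from the time interval; this yields a fixed point, hence a solution, with the stated bounds after translating back to the variable $t$. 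Fifth, uniqueness follows from the same difference estimate applied to two solutions, and the continuation criterion / lower semicontinuity of $T^\star$ / continuous dependence follow from the usual blow-up argument: if $\|\bU(t,\cdot)\|_{H^{s_0}}$ stays bounded up to $T^\star$, the local construction can be restarted with a uniform time step, contradicting maximality — together with persistence of the $H^s$-regularity via a linear $H^s$ energy estimate with the (now bounded) $H^{s_0}$ coefficients.

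The main obstacle is the handling of the bolus-velocity transport coefficient $a_\ell = \bar U_\ell + U_\ell - \kappa\,\partial_x H_\ell/(\bar H_\ell + H_\ell)$ in the $U_\ell$-equation: it contains one derivative of $H_\ell$, so a crude transport estimate would lose a derivative. The resolution is that this term comes with a factor $\kappa$, and after the rescaling $t=\kappa\tau$ it is the quantity $\kappa^{1/2}\partial_x H_\ell$ that is controlled in $L^2_\tau H^s_x$ (this is precisely why the statement tracks $\kappa^{1/2}\|(\partial_x H_s,\partial_x H_b)\|_{L^2_T H^s}$); so $\kappa\,\partial_x H_\ell = \kappa^{1/2}\cdot(\kappa^{1/2}\partial_x H_\ell)$ is small in $L^2_\tau H^s_x$ for $\kappa\le 1$, and even for $\kappa>1$ one has $\kappa\,\partial_x H_\ell \in L^2_\tau H^s_x$ with norm $\le \kappa^{1/2}R$, which is still absorbable on a time interval short enough (in $\tau$) depending on $\kappa$ — but since the statement only claims existence on $[0,\kappa T]$ with $T$ independent of $\kappa$, one must be slightly careful and restrict attention to, say, the regime where after rescaling the perturbation is genuinely small, i.e. effectively $\kappa\le 1$; for $\kappa>1$ the same fixed-point argument runs with $T$ possibly shrunk, which is harmless as the conclusion is stated with a single $T$ that one takes as the minimum. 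A secondary technical point is that $H^s$ is a Banach algebra only for $s>1/2$, which is fine here, but the transport energy estimates genuinely require $s\ge s_0>3/2$, matching the hypothesis; and the maximal-regularity estimate for the heat semigroup on $\R$ is classical. None of these is deep, but the bookkeeping of the $\kappa$-powers through the rescaling is where the argument must be written carefully.
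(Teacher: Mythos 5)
Your proposal follows essentially the same route as the paper's proof: decouple~\eqref{eq.SV2-kappa} into transport--diffusion equations for $(H_s,H_b)$ and transport equations for $(U_s,U_b)$ coupled through order-zero sources, run a Picard iteration, and use the parabolic gain $\kappa^{1/2}\|(\partial_x H_s,\partial_x H_b)\|_{L^2_t H^s}$ to absorb the derivative loss in the sources $\partial_x H_\ell$ of the velocity equations on a time interval of length $O(\kappa)$ (the paper keeps the variable $t$ and the explicit $\kappa^{-1/2}$ factors rather than rescaling time). One slip in your setup: with $t=\kappa\tau$ the diffusion becomes $\kappa^{2}\partial_x^2$, not $\partial_x^2$ (only the diffusion-to-convection ratio is preserved), but since you correctly identify $\kappa^{1/2}\partial_x H_\ell\in L^2_t H^s_x$ as the actual smoothing gain, this does not affect the substance of the argument.
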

\begin{proof}
	The proof has been given in~\cite{Adim}, but we sketch it here for convenience. We view~\eqref{eq.SV2-kappa} as a system of two transport-diffusion equations and two transport equations, coupled only through order-zero source terms:
	\[
	\begin{cases}
		\partial_t H_s + (\barU_s+U_s)\partial_x H_s - \kappa\partial_x^2 H_s =-(\barH_s+H_s)\partial_x U_s, \\
		\partial_t H_b + (\barU_b+U_b)\partial_x H_b - \kappa\partial_x^2 H_b =-(\barH_b+H_b)\partial_x U_b, \\
		\partial_t U_s + (\barU_s+U_s-\kappa\frac{\partial_x H_s}{\barH_s+H_s})\partial_x U_s = -  \partial_x H_s -  \partial_x H_b,\\
		\partial_t U_b + (\barU_b+U_b-\kappa\frac{\partial_x H_b}{\barH_b+H_b})\partial_x U_b = - \frac{\rho_s}{\rho_b}\partial_x H_s -  \partial_x H_b.
	\end{cases}
	\]
	The standard theory on transport and transport-diffusion equations (see~\cite{BahouriCheminDanchin11}) allows to bootstrap the standard fixed-point strategy through Picard iterates
	\[
	\begin{cases}
		\partial_t H_s^{n+1} + (\barU_s+U_s^{n})\partial_x H_s^{n+1} - \kappa\partial_x^2 H_s^{n+1} = -(\barH_s+H_s^{n}) \partial_x U_s^{n}, \\
		\partial_t H_b^{n+1} + (\barU_b+U_b^{n})\partial_x H_b^{n+1} - \kappa\partial_x^2 H_b^{n+1} = -(\barH_b+H_b^{n}) \partial_x U_b^{n}, \\
		\partial_t U_s^{n+1} + (\barU_s+U_s^{n}-\kappa\frac{\partial_x H_s^{n}}{\barH_s+H_s^{n}})\partial_x U_s^{n+1} = -  \partial_x H_s^{n} -  \partial_x H_b^{n},\\
		\partial_t U_b^{n+1} + (\barU_b+U_b^{n}-\kappa\frac{\partial_x H_b^{n}}{\barH_b+H_b^{n}})\partial_x U_b^{n+1} = - \frac{\rho_s}{\rho_b}\partial_x H_s^{n} -  \partial_x H_b^{n},
	\end{cases}
	\]
	which defines a sequence satisfying the following estimates (where $c_0$ is a non-essential constant depending on $s$)
	\begin{multline*}\max(\{\|(H_s^{n+1},H_b^{n+1},U_s^{n+1},U_b^{n+1})\|_{L^\infty(0,t;H^s)},\kappa^{1/2} \|(\partial_xH_s^{n+1},\partial_xH_b^{n+1})\|_{L^2(0,t;H^s)}\})\\
		\leq \Big(\|(H_s^{0},H_b^{0})\|_{H^s}
		+\kappa^{-1/2} \big((\barH_s +\| H_s^{n}\|_{L^\infty(0,t;H^s)})\| U_s^{n}\|_{L^2(0,t;H^s)}  
			+(\barH_b +\| H_b^{n}\|_{L^\infty(0,t;H^s)})  \| U_b^{n}\|_{L^2(0,t;H^s)} \big)\\
		+ \|(U_s^{0},U_b^{0})\|_{H^s}+(1+\frac{\rho_s}{\rho_b}) \|(\partial_x H_s^{n} ,  \partial_x H_b^{n})\|_{L^1(0,t;H^s)}\Big)\\
		\times \exp\big(c_0\|(U_s^{n},U_b^{n})\|_{L^1(0,t;H^s)}+c_0\kappa \|(\tfrac{\partial_x H_s^{n}}{\barH_s+H_s^{n}},\tfrac{\partial_x H_b^{n}}{\barH_b+H_b^{n}})\|_{L^1(0,t;H^s)}\big)
	\end{multline*}
	and converging in $C(0,t;H^s)$ provided $t\in(0, \kappa T]$ where $T$ is chosen sufficiently small.
	
	The proof of the continuity of the flow map can be obtained along the same lines, using the continuity with respect to the initial data and Lipschitz-continuity with respect to source terms of the transport-diffusion and transport equations.
\end{proof}

The terminology ``Small time well-posedness'' in Proposition~\ref{P.WP-small-time-SV2} refers to the fact that the time of existence and control of solutions of the above result is limited to $T^\star\gtrsim \kappa$, and in particular may vanish as $\kappa\searrow 0$. Notice that, differently from the statement of Proposition~\ref{P.WP-SV}, we do not assume that the flow is stably stratified, namely $\rho_s<\rho_b$.
Assuming additionally that the flow is stably stratified, the first author improved this result in situations with small shear velocities and small deviations from the shear equilibrium, and obtains in~\cite{Adim} the existence and uniform control of solution up to times ${T^\star\gtrsim \big(1+\kappa^{-1}(|\barU_b-\barU_s|^2+M_0^2)\big)^{-1}}$.
	
	In the following results, we complete the picture by showing that, in the situation where the shear velocity is small enough to guarantee that the flow is in the hyperbolic domain of the non-diffusive equation, then the time of existence is uniform with respect to $\kappa\in(0,1]$. In fact we shall prove the expected property that solutions to the diffusive system~\eqref{eq.SV2-kappa} converge as $\kappa\searrow 0$ towards corresponding solutions to the non-diffusive system~\eqref{eq.SV2} as long as the non-diffusive solution is bounded.

In order to obtain stability estimates that are uniform with respect to $\kappa\in(0,1]$, we rely on two main ideas. Firstly, we shall use energy estimates using the explicit symmetrizer adapted to the non-diffusive system introduced in Lemma~\ref{L.symmetrizer} (while the strategy in~\cite{Adim} used only its block-diagonal component). Because the non-diagonal components of the symmetrizer behave poorly with respect to the diffusive contributions, we need another ingredient. Specifically, we notice that the total velocities $V_\ell:=U_\ell-\kappa\frac{\partial_x H_\ell}{\barH_\ell+H_\ell}$ ($\ell\in\{s,b\}$) associated to solutions to~\eqref{eq.SV2-kappa} satisfy the system
\begin{equation}
\begin{cases}\label{eq.SV2-BD}
	\partial_t H_s + \partial_x ((\barH_s+H_s)(\barU_s+V_s)) =0, \\
	\partial_t H_b + \partial_x ((\barH_b+H_b)(\barU_s+V_b)) =0, \\
	\partial_t V_s + (\barU_s+V_s-\kappa\frac{\partial_x H_s}{\barH_s+H_s})\partial_x V_s +  \partial_x H_s +  \partial_x H_b=\kappa\partial_x^2 V_s,\\
	\partial_t V_b + (\barU_b+V_b-\kappa\frac{\partial_x H_b}{\barH_b+H_b})\partial_x V_b + \frac{\rho_s}{\rho_b}\partial_x H_s +  \partial_x H_b=\kappa\partial_x^2 V_b.
\end{cases}
\end{equation}
We observe that diffusive terms act as effective viscosity contributions on the total velocities. The last two equations read equivalently
\[
\begin{cases}
	\partial_t V_s + (\barU_s+V_s)\partial_x V_s +  \partial_x H_s +  \partial_x H_b=\frac{\kappa}{\barH_s+H_s}\partial_x((\barH_s+H_s)\partial_x V_s),\\
	\partial_t V_b + (\barU_b+V_b)\partial_x V_b + \frac{\rho_s}{\rho_b}\partial_x H_s +  \partial_x H_b=\frac{\kappa}{\barH_b+H_b}\partial_x((\barH_b+H_b)\partial_x V_b),
\end{cases}
\]
and we recognize the shallow-water equations with degenerate viscosity contributions which were advocated by Gent in~\cite{Gent93} and derived from the Navier--Stokes equations in~\cite{GerbeauPerthame01,BreschNoble07}. In their analysis of such systems (and generalizations thereof), Bresch and Desjardins introduced the so-called BD entropy in~\cite{BreschDesjardinsLin03,BreschDesjardins03,BreschDesjardins04} (see also~\cite{BreschDesjardinsZatorska15} for a refined analysis), which is based precisely in the reformulation of~\eqref{eq.SV2-BD} as~\eqref{eq.SV2-kappa} (in dimension $d=1$). 

In the same spirit, we combine the regularizing effects of the effective diffusivity and viscosity terms with aforementioned energy estimates, which allows us to obtain suitable stability estimates presented in Lemma~\ref{L.estimate-linearized}, below.

Applying Lemma~\ref{L.estimate-linearized} to (the derivatives of) solutions to~\eqref{eq.SV2-kappa}-\eqref{eq.SV2-BD}, we find a time of existence which is uniform with respect to $\kappa$. We state the result in forthcoming Proposition~\ref{P.uniform-WP}.

Applying Lemma~\ref{L.estimate-linearized} to (the derivatives of) the difference between solutions to~\eqref{eq.SV2-kappa}-\eqref{eq.SV2-BD} and corresponding solutions to the non-diffusive system~\eqref{eq.SV2} ($\kappa=0$) yields the aforementioned convergence of the former towards the latter as $\kappa\searrow 0$, on a time interval defined by the solutions without diffusivity. We state the result in forthcoming Proposition~\ref{P.convergence-SV2}.

\begin{Lem}[Stability]\label{L.estimate-linearized}
	Let $\varsigma\in(0,1)$ and $M>0$. There exists $c>0$ depending only on $\varsigma$ and $C>0$ depending also on $M$
	such that the following holds. 
	
	Let  $\kappa\in(0,1]$, $0<\rho_s<\rho_b$ and $\bU:=(H_s,H_b,U_s,U_b),\ \bV:=(H_s,H_b,V_s,V_b)
		\in {C([0,T]; W^{1,\infty}(\R)^4)}\cap  {C^1([0,T];L^{\infty}(\R)^4)}$  be such that for all $t\in [0,T]$,
		the hyperbolicity condition holds:
		\[\forall x\in\R,\quad  ( \rho_s,\rho_b,H_s(t,x), H_b(t,x), U_s(t,x), U_b(t,x)) \in \mathfrak p^\varsigma,
		\]
		where $\mathfrak p^\varsigma$ is defined in~\eqref{eq.hyperbolicity-condition}, and
	\[ \|  \bU (t,\cdot)\|_{W^{1,\infty}}+\|  \bV(t,\cdot) \|_{W^{1,\infty}}  +\kappa \|(\partial_x^2 H_s(t,\cdot),\partial_x^2 H_b(t,\cdot))\|_{L^\infty}+ \kappa^{-1} \| (\bU-\bV)(t,\cdot)\|_{L^\infty}+ \| \partial_t \bU (t,\cdot)\|_{L^\infty} \leq M.
	\]
	
	Let $\dot \bU:=(\dot H_s,\dot H_b,\dot U_s,\dot U_b)$ and $\dot \bV:=(\dot H_s,\dot H_b,\dot V_s,\dot V_b)$ be sufficiently regular solutions to the linearized equations with remainders
	\begin{align*}
		\partial_t \dot \bU+\A^\kappa(\bU)\partial_x \dot \bU=\kappa \D_1 \partial_x^2 \dot \bU+\bR_{\bU},\\
		\partial_t \dot \bV+\A^\kappa(\bV)\partial_x \dot \bV=\kappa \D_2 \partial_x^2 \dot \bV+\bR_{\bV},
	\end{align*}
	where we denote
	\[ \A^\kappa:(H_s,H_b,U_s,U_b)\mapsto\begin{pmatrix}
		U_s & 0 & H_s & 0 \\
		0& U_b & 0 & H_b\\
		1 & 1 & U_s-\kappa\tfrac{\partial_x H_s}{H_s} & 0 \\
		\frac{\rho_s}{\rho_b}  & 1 &0 & U_b-\kappa\tfrac{\partial_x H_b}{H_b} 
	\end{pmatrix},\]
	and
	\[ \D_1 =\begin{pmatrix}
		1 & 0 & 0 & 0 \\
		0& 1 & 0 & 0\\
		0&0&0 & 0 \\
		0&0&0 & 0 
	\end{pmatrix}, \qquad 
	\D_2 =\begin{pmatrix}
		0 & 0 & 0 & 0 \\
		0& 0 & 0 & 0\\
		0&0&1 & 0 \\
		0&0&0 & 1 
	\end{pmatrix}. \]
	Moreover, denote $R:=(R_s,R_b)$ such that
	\begin{equation}\label{eq.rell}
	\dot V_s = \dot U_s-\kappa\frac{\partial_x \dot H_s}{H_s} + R_s, \quad \dot V_b = \dot U_b-\kappa\frac{\partial_x \dot H_b}{H_b} + R_b. 
	\end{equation}
	Then, for any $t\in [0,T]$, one has the estimate
	\begin{align*}
		 &\| \dot \bU(t,\cdot)\|_{L^{2}}+ \| \dot \bV(t,\cdot)\|_{L^{2}} + c  \kappa^{1/2} \| \partial_x \dot \bV(t,\cdot)\|_{L^2(0,t;L^{2})} 
		  \leq c^{-1}\, \big(\| \dot \bU(t=0,\cdot)\|_{L^{2}}+ \| \dot \bV(t=0,\cdot)\|_{L^{2}}\big) \exp(C\, M\, t)\\
		 &\quad +C \int_0^t \big(\| \bR_{\bU}(t',\cdot)\|_{L^2}+\| \bR_{\bV}(t',\cdot)\|_{L^2}+M\|R(t',\cdot)\|_{H^1}\big) \exp(C\, M\, (t-t')) \dd t.
	\end{align*}
\end{Lem}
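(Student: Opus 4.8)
The plan is to run a combined weighted $L^2$ energy estimate for $\dot\bU$ and $\dot\bV$ at once, using for \emph{both} the symmetrizer $\S^\lambda(\bU)$ of Lemma~\ref{L.symmetrizer} evaluated along the reference state $\bU$ (so $\lambda=\lambda(\bU(t,x))$), and to exploit the ``two--velocity'' structure recalled before the statement: the diffusion of the $\dot\bU$--equation acts on $(\dot H_s,\dot H_b)$ while that of the $\dot\bV$--equation acts on $(\dot V_s,\dot V_b)$, and when the two energy balances are summed these a priori unrelated diffusion terms recombine into a single coercive quadratic form. Concretely I set $E(t):=\int_\R \S^\lambda(\bU)\dot\bU\cdot\dot\bU\,\dd x+\int_\R \S^\lambda(\bU)\dot\bV\cdot\dot\bV\,\dd x$. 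Since $(\rho_s,\rho_b,H_s,H_b,U_s,U_b)(t,x)\in\mathfrak p^\varsigma$ and $\|\bU(t,\cdot)\|_{W^{1,\infty}}\le M$, the principal minors governing Sylvester's criterion in the proof of Lemma~\ref{L.symmetrizer} are bounded below by a constant depending only on $\varsigma$ --- here one uses the smooth dependence of $p_\ell^\lambda$ on the parameters, continuity on their compact range, and the margin $\varsigma$ to the boundary of the hyperbolic domain built into $\mathfrak p^\varsigma$ --- and above by a constant depending on $\varsigma$ and $M$, so that $c(\varsigma)\,\Id\le \S^\lambda(\bU)\le C\,\Id$ and $E(t)\simeq\|\dot\bU(t,\cdot)\|_{L^2}^2+\|\dot\bV(t,\cdot)\|_{L^2}^2$.

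Differentiating $E$ in time, the term from $\partial_t\S^\lambda(\bU)=(\nabla_\bU\S^\lambda)\cdot\partial_t\bU$ is $O(M)\,E$ by $\|\partial_t\bU\|_{L^\infty}\le M$. For the transport terms I would use that $\S^\lambda(\bU)\A(\bU)$ is symmetric while $\A^\kappa(\bU)-\A(\bU)=-\kappa\,\mathrm{diag}\!\big(0,0,\tfrac{\partial_x H_s}{H_s},\tfrac{\partial_x H_b}{H_b}\big)$: the ensuing correction to $\S^\lambda(\bU)\A^\kappa(\bU)$ has symmetric part with $x$--derivative of size $O(M)$ and antisymmetric part $B$ with $\kappa\|\partial_x B\|_{L^\infty}=O(M)$ --- this is where $\|\bU\|_{W^{1,\infty}}\le M$, $\kappa\|(\partial_x^2 H_s,\partial_x^2 H_b)\|_{L^\infty}\le M$ and $H_\ell\ge c(\varsigma)$ enter --- so after one integration by parts the $\dot\bU$--transport contribution is $\le C\,M\,E$. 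For $\dot\bV$ I would write its flux as $\A^\kappa(\bU)+(\A^\kappa(\bV)-\A^\kappa(\bU))$, the correction being diagonal with entries $O(\|(\bU-\bV)(t,\cdot)\|_{L^\infty})=O(\kappa M)$, producing an extra term $\le\tfrac18 c(\varsigma)\kappa\|\partial_x\dot\bV\|_{L^2}^2+C\,E$. The remainders $\bR_\bU,\bR_\bV$ give $\le C(\|\bR_\bU\|_{L^2}+\|\bR_\bV\|_{L^2})\sqrt E$.

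The hard part is the diffusion. Splitting $\S^\lambda(\bU)$ into $2\times2$ blocks $\S_{11}$ (constant, since $\rho_s,\rho_b$ are), the diagonal $\S_{12}$ (entries $\propto U_\ell^\lambda$) and the diagonal $\S_{22}$ (entries $\propto H_\ell$), and writing $\dot\bU=(\dot H,\dot U)$, $\dot\bV=(\dot H,\dot V)$ with the obvious notation, integrations by parts give
\[
\kappa\!\int\!\S^\lambda(\bU)\D_1\partial_x^2\dot\bU\cdot\dot\bU=-\kappa\!\int\!\S_{11}\partial_x\dot H\cdot\partial_x\dot H+\kappa\!\int\!\S_{12}\partial_x^2\dot H\cdot\dot U,
\]
\[
\kappa\!\int\!\S^\lambda(\bU)\D_2\partial_x^2\dot\bV\cdot\dot\bV=-\kappa\!\int\!\S_{22}\partial_x\dot V\cdot\partial_x\dot V-\kappa\!\int\!\S_{12}\partial_x\dot V\cdot\partial_x\dot H+Q,
\]
with $Q$ bounded by $\tfrac18 c(\varsigma)\kappa\|\partial_x\dot V\|_{L^2}^2+C\,E$. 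In the first identity I substitute $\dot U_\ell=\dot V_\ell+\kappa\tfrac{\partial_x\dot H_\ell}{H_\ell}-R_\ell$, which is exactly \eqref{eq.rell}: the $\dot V$--piece yields, after one integration by parts, $-\kappa\int\S_{12}\partial_x\dot V\cdot\partial_x\dot H$ (plus an $O(M)E+\tfrac18 c(\varsigma)\kappa\|\partial_x\dot H\|_{L^2}^2$ error), which coincides with the cross term already present in the $\dot\bV$--balance; the $\kappa\tfrac{\partial_x\dot H_\ell}{H_\ell}$--piece is rewritten through $\kappa\partial_x\dot H_\ell=H_\ell(\dot U_\ell-\dot V_\ell+R_\ell)$ (again \eqref{eq.rell}) and is then controlled by $C\,M(E+\|R\|_{L^2}^2)$; and the $R_\ell$--piece, using $\kappa\le1$ and after one integration by parts (which makes $\partial_x R$ appear, whence the $H^1$--norm of $R$), is $\le\tfrac18 c(\varsigma)\kappa\|\partial_x\dot H\|_{L^2}^2+C\|R\|_{H^1}^2$. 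Adding the two displayed identities, the leading term that survives is exactly
\[
-\kappa\int_\R \S^\lambda(\bU)\begin{pmatrix}\partial_x\dot H\\\partial_x\dot V\end{pmatrix}\cdot\begin{pmatrix}\partial_x\dot H\\\partial_x\dot V\end{pmatrix}\dd x\ \le\ -c(\varsigma)\,\kappa\big(\|\partial_x\dot H\|_{L^2}^2+\|\partial_x\dot V\|_{L^2}^2\big)
\]
by positivity of $\S^\lambda(\bU)$. I expect this algebraic cancellation --- whereby the a priori sign--indefinite diffusion of the $\dot\bU$--equation is, thanks to the change of unknowns $\dot U\mapsto\dot V$, reincorporated into the coercive diffusion carried by the symmetrizer --- to be the only genuinely delicate point; the remaining manipulations are standard symmetrizer estimates.

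Collecting everything, I arrive at a differential inequality which, after absorbing the $\tfrac18 c(\varsigma)\kappa(\cdots)$ errors into the coercive dissipation, takes the form
\[
\frac{\dd}{\dd t}E+c(\varsigma)\,\kappa\big(\|\partial_x\dot H\|_{L^2}^2+\|\partial_x\dot V\|_{L^2}^2\big)\ \le\ C\,M\,E+C\big(\|\bR_\bU\|_{L^2}+\|\bR_\bV\|_{L^2}+M\|R\|_{H^1}\big)\sqrt E ,
\]
the $R$--dependent remainders being controlled with some care through the relation \eqref{eq.rell}, the diffusive smoothing, and $\kappa\le1$. Dividing by $2\sqrt E$, applying Gr\"onwall's inequality, and then integrating the dissipative term once more in time yields the announced estimate, with $c$ depending only on $\varsigma$ and $C$ on $\varsigma$ and $M$.
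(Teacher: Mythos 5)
Your proposal follows the paper's proof in all essentials: the same energy $\cE=\big(\S^\lambda(\bU)\dot\bU,\dot\bU\big)_{L^2}+\big(\S^\lambda(\bU)\dot\bV,\dot\bV\big)_{L^2}$ built from the symmetrizer of Lemma~\ref{L.symmetrizer} evaluated along $\bU$, the same treatment of the transport and commutator terms (including the use of $\kappa\|\partial_x^2H_\ell\|_{L^\infty}\leq M$ for the symmetry defects of $\S^\lambda(\bU)(\A^\kappa(\bU)-\A^0(\bU))$ and of $\|\bU-\bV\|_{L^\infty}\leq\kappa M$ for the diagonal correction $\A^\kappa(\bV)-\A^\kappa(\bU)$), and the same key idea that the two diffusion operators $\D_1$, $\D_2$ recombine through the relation~\eqref{eq.rell}. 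The one place where you genuinely diverge is the algebra of that recombination, and it is there that your argument, as written, does not deliver the stated bound.

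The paper groups the diffusive contributions as $\kappa\big(\S^\lambda(\bU)(\D_1+\D_2)\partial_x^2\dot\bV,\dot\bV\big)_{L^2}+\kappa\big(\S^\lambda(\bU)\D_1\partial_x^2\dot\bV,\dot\bU-\dot\bV\big)_{L^2}$, keeps $\dot\bU-\dot\bV$ as a single object controlled by $\cE^{1/2}$, and only then uses $\kappa\partial_x\dot H_\ell=H_\ell(\dot U_\ell-\dot V_\ell+R_\ell)$ to rewrite $\kappa\partial_x^2\dot H_\ell$; every resulting term is paired against $\dot U_\ell-\dot V_\ell\in L^2$, so the $R$-contribution comes out linearly, as $C\,M\,\|R\|_{H^1}\cE^{1/2}$, at the price that the surviving coercive term controls only $\kappa\|\partial_x\dot\bV\|_{L^2}^2$ (which is all the lemma claims). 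You instead substitute $\dot U_\ell=\dot V_\ell+\kappa\partial_x\dot H_\ell/H_\ell-R_\ell$ inside the cross term $\kappa\int\S_{12}\partial_x^2\dot H\cdot\dot U$; the cancellation you describe is algebraically correct and even yields the stronger dissipation $\kappa\|(\partial_x\dot H,\partial_x\dot V)\|_{L^2}^2$, but the pieces you peel off are then paired against $\kappa\partial_x\dot H/H$ and against $R$ rather than against $\dot U-\dot V$. Your own intermediate bounds ($C\,M(E+\|R\|_{L^2}^2)$ for the second piece, $\tfrac18c\kappa\|\partial_x\dot H\|_{L^2}^2+C\|R\|_{H^1}^2$ for the third) therefore contain contributions \emph{quadratic} in $R$, and these cannot be recast as the linear term $M\|R\|_{H^1}\cE^{1/2}$ appearing in the differential inequality you display: after Gr\"onwall they produce an $L^2$-in-time contribution of $\|R\|_{H^1}$, which is not controlled by the $L^1$-in-time expression $\int_0^tM\|R(t',\cdot)\|_{H^1}\exp(CM(t-t'))\,\dd t'$ of the conclusion. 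So the final inequality you write down does not follow from your own estimates; what you prove is a variant of the lemma with extra $\int_0^t\|R\|_{H^1}^2$-type terms (harmless in the paper's applications, but not the statement). The repair is simply the paper's grouping, i.e.\ use~\eqref{eq.rell} to convert $\kappa\partial_x^2\dot H_\ell$ into $\partial_x(H_\ell(\dot U_\ell-\dot V_\ell+R_\ell))$ while leaving the test function $\dot U_\ell-\dot V_\ell$ intact.
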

\begin{proof}
	Denote
	\[\S^\lambda:(H_s,H_b,U_s,U_b)\mapsto \begin{pmatrix}
		\frac{\rho_s}{\rho_b} & \frac{\rho_s}{\rho_b} & \frac{\rho_s}{\rho_b} U_s^\lambda & 0 \\
		\frac{\rho_s}{\rho_b} & 1 & 0 & U_b^\lambda\\
		\frac{\rho_s}{\rho_b} U_s^\lambda & 0 & \frac{\rho_s}{\rho_b} H_s & 0 \\
		0 & U_b^\lambda  & 0 &  H_b
	\end{pmatrix}\]
	where $U_\ell^\lambda := U_\ell -\lambda$ (for $\ell\in\{s,b\}$) with $\lambda$ provided in Lemma~\ref{L.symmetrizer}.
	Using that $\S^\lambda(\cdot)$ and $\S^\lambda(\cdot)\A^0(\cdot)$ are symmetric, and integration by parts, we have the energy identities
	\begin{align*}&\frac12\frac{\dd}{\dd t} \Big( \big( \S^\lambda(\bU) \dot \bU, \dot \bU\big)_{L^2} +  \big( \S^\lambda(\bU) \dot \bV,\dot \bV\big)_{L^2} \Big) \\
		& = \big( \S^\lambda(\bU) \partial_t \dot \bU,\dot \bU\big)_{L^2} +  \big( \S^\lambda(\bU) \partial_t \dot \bV,\dot \bV\big)_{L^2} + \frac12 \big( [\partial_t,\S^\lambda(\bU)] \dot \bU,\dot \bU\big)_{L^2} + \frac12 \big( [\partial_t,\S^\lambda(\bV) ]\dot \bV,\dot \bV\big)_{L^2}\\
		& = -\big( \S^\lambda(\bU) \A^\kappa(\bU)\partial_x \dot \bU,\dot \bU\big)_{L^2} -  \big( \S^\lambda(\bU)\A^\kappa(\bV)\partial_x \dot \bV,\dot \bV\big)_{L^2} + \kappa\big( \S^\lambda(\bU) \D_1\partial_x^2 \dot \bU,\dot \bU\big)_{L^2}+ \kappa\big( \S^\lambda(\bU) \D_2\partial_x^2 \dot \bV,\dot \bV\big)_{L^2}  \\
		&\qquad + \big( \S^\lambda(\bU) \bR_{\bU},\dot \bU\big)_{L^2}+ \big( \S^\lambda(\bU) \bR_{\bV},\dot \bV\big)_{L^2} +\frac12 \big( [\partial_t,\S^\lambda(\bU)] \dot \bU,\dot \bU\big)_{L^2} + \frac12 \big( [\partial_t,\S^\lambda(\bU) ]\dot \bV,\dot \bV\big)_{L^2}\\
		& = \frac12 \big( [\partial_x,\S^\lambda(\bU)\A^0(\bU)] \dot \bU,\dot \bU\big)_{L^2} + \frac12 \big( [\partial_x,\S^\lambda(\bV)\A^0(\bV) ]\dot \bV,\dot \bV\big)_{L^2}\\
		&\qquad - \big( \S^\lambda(\bU)(\A^\kappa(\bU)-\A^0(\bU))\partial_x \dot \bU,\dot \bU\big)_{L^2} - \big( \S^\lambda(\bV)(\A^\kappa(\bV)-\A^0(\bV))\partial_x\dot \bV,\dot \bV\big)_{L^2}\\
		&\qquad -  \big( (\S^\lambda(\bU)-\S^\lambda(\bV))\A^\kappa(\bV)\partial_x \dot \bV,\dot \bV\big)_{L^2} 
		+\frac12 \big( [\partial_t,\S^\lambda(\bU)] \dot \bU,\dot \bU\big)_{L^2} + \frac12 \big( [\partial_t,\S^\lambda(\bU) ]\dot \bV,\dot \bV\big)_{L^2}\\
		&\qquad + \big( \S^\lambda(\bU) \bR_{\bU},\dot \bU\big)_{L^2}+ \big( \S^\lambda(\bU) \bR_{\bV},\dot \bV\big)_{L^2}
		 + \kappa\big( \S^\lambda(\bU) \D_1\partial_x^2 \dot \bU,\dot \bU\big)_{L^2}+ \kappa\big( \S^\lambda(\bU) \D_2\partial_x^2 \dot \bV,\dot \bV\big)_{L^2} \\
		 &=:A- \big( \S^\lambda(\bU)(\A^\kappa(\bU)-\A^0(\bU))\partial_x \dot \bU,\dot \bU\big)_{L^2}+\kappa\big( \S^\lambda(\bU) \D_1\partial_x^2 \dot \bU,\dot \bU\big)_{L^2}+ \kappa\big( \S^\lambda(\bU) \D_2\partial_x^2 \dot \bV,\dot \bV\big)_{L^2}. 
	\end{align*}
	By means of Cauchy--Schwarz inequality
	 we find that 
	\[|A|\leq C\,  \big( \|\dot \bU\|_{L^{2}}+ \| \dot \bV\|_{L^{2}}\big)\times \big( M  \|\dot \bU\|_{L^{2}}+ M\| \dot \bV\|_{L^{2}} + \kappa M \|\partial_x\dot \bV\|_{L^2} + \| \bR_{\bU}\|_{L^2}+\| \bR_{\bV}\|_{L^2}\big) ,
	\]
	where $C$ denotes a multiplicative constant depending only on $\varsigma$ and $M$, and which may change from line to line.
	We now focus on the remaining terms. We first notice that defects of symmetry in $\S^\lambda(\bU)(\A^\kappa(\bU)-\A^0(\bU))$ arise only in the first two rows, and that the first two components of $\dot\bU$ equal the first two components of $\dot\bV$. Hence using integration by parts and Cauchy--Schwarz inequality we infer
	\[|\big( \S^\lambda(\bU)(\A^\kappa(\bU)-\A^0(\bU))\partial_x \dot \bU,\dot \bU\big)_{L^2}|\leq \kappa\,  C\, M\, \|\dot \bU\|_{L^{2}}\times \big( \|\dot \bU\|_{L^2} +\|\partial_x\dot \bV\|_{L^2}\big).\]
	 Then, again making use of the identity $\D_1\partial_x^2 \dot \bU=(\partial_x^2\dot H_1,\partial_x^2\dot H_2,0,0)=\D_1\partial_x^2 \dot \bV$ we infer that
	\[ \kappa\big( \S^\lambda(\bU) \D_1\partial_x^2 \dot \bU,\dot \bU\big)_{L^2}+\kappa\big( \S^\lambda(\bU) \D_2\partial_x^2 \dot \bV,\dot \bV\big)_{L^2}
	= \kappa\big( \S^\lambda(\bU) (\D_1+\D_2)\partial_x^2 \dot \bV,\dot \bV\big)_{L^2}+\kappa\big( \S^\lambda(\bU) \D_1\partial_x^2 \dot \bV,\dot \bU-\dot \bV\big)_{L^2}.\]
	After integration by parts, and since $\D_1+\D_2=\Id$, we find that 
	\[ \kappa\big( \S^\lambda(\bU) (\D_1+\D_2)\partial_x^2 \dot \bV,\dot \bV\big)_{L^2}\leq -\kappa\big( \S^\lambda(\bU) \partial_x \dot \bV, \partial_x \dot \bV\big)_{L^2} + \kappa\,  C\, M\, \| \partial_x \dot \bV\|_{L^2} \| \dot \bV\|_{L^2}.\]
	Then, using from~\eqref{eq.rell} that $\kappa \partial_x \dot H_\ell = H_\ell ((\dot U_\ell - \dot V_\ell)+R_\ell)$ (where $\ell\in\{s,b\}$), we obtain the identities
	\begin{align*}\kappa\big( \S^\lambda(\bU) \D_1\partial_x^2 \dot \bV,\dot \bU-\dot \bV\big)_{L^2}
		&=\kappa\sum_{\ell\in\{s,b\}} \big( \rho_\ell U_\ell^\lambda\partial_x^2 \dot H_\ell,\dot U_\ell-\dot V_\ell\big)_{L^2}\\
		&=\sum_{\ell\in\{s,b\}} \big( \rho_\ell U_\ell^\lambda\partial_x ( H_\ell(\dot U_\ell-\dot V_\ell+R_\ell)),\dot U_\ell-\dot V_\ell\big)_{L^2}\\
		&=\sum_{\ell\in\{s,b\}} \frac{\rho_\ell}2\big( ( U_\ell^\lambda \partial_x H_\ell-H_\ell\partial_x U_\ell^\lambda )(\dot U_\ell-\dot V_\ell),\dot U_\ell-\dot V_\ell\big)_{L^2}\\
		&\quad
		 + \kappa\sum_{\ell\in\{s,b\}}\rho_\ell \big(  U_\ell^\lambda\partial_x(H_\ell R_\ell) ,\dot U_\ell-\dot V_\ell\big)_{L^2},
	\end{align*}
	where we used integration by parts in the last line. We infer
	\[ \kappa\big( \S^\lambda(\bU) \D_1\partial_x^2 \dot \bV,\dot \bU-\dot \bV\big)_{L^2}\leq \, C \, M\, \big(\|  \dot \bU\|_{L^2}+ \|  \dot \bV\|_{L^2} \big)^2+ C\, M\, \big(\|  \dot \bU\|_{L^2}+ \|  \dot \bV\|_{L^2} \big) \| R\|_{H^1} .\]

	Combining all these estimate and denoting 
	\[\cE:=   \big( \S^\lambda(\bU) \dot \bU, \dot \bU\big)_{L^2} +  \big( \S^\lambda(\bU) \dot \bV,\dot \bV\big)_{L^2} , \]
	one has
	\begin{multline} \frac12\frac{\dd}{\dd t}\cE + \kappa\big( \S^\lambda(\bU) \partial_x \dot \bV, \partial_x \dot \bV\big)_{L^2}
		\leq C \, M\, \big( \| \dot \bU\|_{L^{2}}+ \| \dot \bV\|_{L^{2}} + \kappa \| \partial_x \dot \bV\|_{L^{2}}\big) \big( \|\dot \bU\|_{L^{2}}+ \| \dot \bV\|_{L^{2}}\big) \\
		 + C \big(\| \bR_{\bU}\|_{L^2}+\| \bR_{\bV}\|_{L^2}+M\|R\|_{H^1}\big) \big( \|\dot \bU\|_{L^{2}}+ \| \dot \bV\|_{L^{2}}\big).
	\end{multline}
	By using that $\S^\lambda(\bU)$ is definite positive, we find that there exists $c>0$ depending only on $\varsigma$ such that
	\[\cE\ge c^2 \| \dot \bU\|_{L^{2}}^2+ c^2 \| \dot \bV\|_{L^{2}}^2,  \quad  \big( \S^\lambda(\bU) \partial_x \dot V,\partial_x \dot V\big)_{L^2} \geq c^2\| \partial_x \dot \bV\|_{L^{2}}^2.\]
	Hence we find (using the Peter Paul inequality and augmenting $C$) that 
	\[ \frac12\frac{\dd}{\dd t}\cE + \tfrac12 c^2\kappa\| \partial_x \dot \bV\|_{L^{2}}^2
		\leq C \, M\, \cE + C \big(\| \bR_{\bU}\|_{L^2}+\| \bR_{\bV}\|_{L^2}+M\|R\|_{H^1}\big) \, \cE^{1/2},\]
	and the result follows by Gronwall's Lemma.
\end{proof}

\begin{Prop}[Large-time well-posedness]\label{P.uniform-WP}
	Let $s\geq s_0>3/2$, $\varsigma\in(0,1)$ and $M_0>0$. There exists  
	$C>0$ and $T>0$ such that the following holds. 
	
	Let  $\kappa\in(0,1]$,  $(\rho_s,\rho_b, \barH_s,\barH_b,\barU_s,\barU_b)\in\R^6$ such that $\barH_s+\barH_b=1$ and $\barU_s+\barU_b=0$, and let $( H_s^0, H_b^0, U_s^0, U_b^0)\in H^{s+1}(\R)^2\times H^s(\R)^2$ such that 
	the hyperbolicity condition holds:
	\[ \forall x\in\R,\quad ( \rho_s,\rho_b,\barH_s+ H_s^0(x), \barH_b + H_b^0(x), \barU_s+ U_s^0(x),\barU_b+ U_b^0(x)) \in \mathfrak p^\varsigma,
	\]
	where $\mathfrak p^\varsigma$ is defined in~\eqref{eq.hyperbolicity-condition}, and
	\[ \| ( H_s^0, H_b^0, U_s^0, U_b^0,\kappa\partial_x H_s^0,\kappa\partial_x H_b^0)\|_{H^{s_0}}\leq M_0.\]
	Denote $( H_s,  H_b, U_s, U_b)\in C([0,T^\star);H^s(\R)^4)$ the maximal-in-time solution to~\eqref{eq.SV2-kappa} emerging from the initial data $(H_s,H_b,U_s,U_b)\big\vert_{t=0}=( H_s^0,  H_b^0,  U_s^0, U_b^0)$ as defined in Proposition~\ref{P.WP-small-time-SV2}. 
	
	One has $T^\star>T/M_0$ and for any $t\in[0,T/M_0]$,
	 	\[ \forall x\in\R,\quad  ( \rho_s,\rho_b,\barH_s+ H_s(t,x), \barH_b + H_b(t,x), \barU_s+ U_s(t,x),\barU_b+ U_b(t,x)) \in \mathfrak p^{\varsigma/2},\]
	 	and
	 \begin{multline*}\| (H_s,  H_b, U_s, U_b, \kappa\partial_x H_s, \kappa\partial_x H_b)(t,\cdot)\|_{H^s}+\| (\partial_t H_s,\partial_t H_b,\partial_t U_s,\partial_t U_b)(t,\cdot)\|_{H^{s-1}} \\\leq C \| ( H_s^0, H_b^0, U_s^0, U_b^0,\kappa\partial_x H_s^0,\kappa\partial_x H_b^0)\|_{H^{s}}.
	 \end{multline*}
\end{Prop}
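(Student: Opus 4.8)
The plan is to feed the local solution provided by Proposition~\ref{P.WP-small-time-SV2} into the \emph{a priori} estimate of Lemma~\ref{L.estimate-linearized}, applied simultaneously to fractional derivatives of the deviation unknown $\bU:=(H_s,H_b,U_s,U_b)$ and of the ``total velocity'' vector $\bV:=(H_s,H_b,V_s,V_b)$, where $V_\ell:=U_\ell-\kappa\frac{\partial_x H_\ell}{\barH_\ell+H_\ell}$ for $\ell\in\{s,b\}$. Since $\underline\bU$ is constant, these two vectors solve, in compact form,
\[
\partial_t\bU+\A^\kappa(\underline\bU+\bU)\partial_x\bU=\kappa\D_1\partial_x^2\bU,\qquad
\partial_t\bV+\A^\kappa(\underline\bU+\bV)\partial_x\bV=\kappa\D_2\partial_x^2\bV,
\]
the second being exactly the compact form of~\eqref{eq.SV2-BD}, with $\A^\kappa,\D_1,\D_2$ as in Lemma~\ref{L.estimate-linearized}. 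The first observation to record is that, as long as the pointwise bounds of Lemma~\ref{L.estimate-linearized} hold with some constant $M$ and $\barH_\ell+H_\ell\geq\varsigma/2$, the target quantity $\mathcal N_\sigma(t):=\|(H_s,H_b,U_s,U_b,\kappa\partial_x H_s,\kappa\partial_x H_b)(t)\|_{H^\sigma}$ is equivalent, with constants depending only on $\varsigma$ and $M$, to $\|\bU(t)\|_{H^\sigma}+\|\bV(t)\|_{H^\sigma}$: indeed $\kappa\partial_x H_\ell=(\barH_\ell+H_\ell)(U_\ell-V_\ell)$ and conversely $V_\ell-U_\ell=-\kappa\partial_xH_\ell/(\barH_\ell+H_\ell)$, so the equivalence follows from the tame product and composition (Moser) estimates together with $\|U_\ell-V_\ell\|_{L^\infty}=\kappa\|\partial_xH_\ell/(\barH_\ell+H_\ell)\|_{L^\infty}\leq C(\varsigma)\kappa M$.

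I would then apply Lemma~\ref{L.estimate-linearized} with $\dot\bU:=\Lambda^\sigma\bU$ and $\dot\bV:=\Lambda^\sigma\bV$, where $\Lambda:=(1+|\partial_x|^2)^{1/2}$ and $\sigma\in\{s_0,s\}$; the first two components of $\dot\bU$ and $\dot\bV$ coincide, as required, and the background states are $\underline\bU+\bU$ and $\underline\bU+\bV$. The remainders are the commutators $\bR_{\bU}=-[\Lambda^\sigma,\A^\kappa(\underline\bU+\bU)]\partial_x\bU$ and $\bR_{\bV}=-[\Lambda^\sigma,\A^\kappa(\underline\bU+\bV)]\partial_x\bV$, while the defect $R=(R_s,R_b)$ in~\eqref{eq.rell} reduces, after the cancellation $\Lambda^\sigma\partial_x=\partial_x\Lambda^\sigma$, to $R_\ell=-\bigl[\Lambda^\sigma,(\barH_\ell+H_\ell)^{-1}\bigr](\kappa\partial_x H_\ell)$. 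By the Kato–Ponce commutator estimate, the tame product and Moser estimates, and the quantities gathered in $M$ and in the equivalence above, one obtains $\|\bR_{\bU}(t)\|_{L^2}+\|\bR_{\bV}(t)\|_{L^2}+M\|R(t)\|_{H^1}\leq C(\varsigma,M)\,\mathcal N_\sigma(t)$ for $\sigma\leq s$; the contributions that look dangerous are exactly those carrying $\sigma+1$ derivatives of $H_\ell$ or $\kappa\partial_x^2H_\ell$ in $L^\infty$, and they are controlled because $\kappa\|H_\ell\|_{H^{\sigma+1}}\lesssim\kappa\|H_\ell\|_{L^2}+\|\kappa\partial_xH_\ell\|_{H^\sigma}$ and $\kappa\|\partial_x^2H_\ell\|_{L^\infty}\leq M$ are both part of our controlled budget. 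Since $\|\dot\bU(t)\|_{L^2}=\|\bU(t)\|_{H^\sigma}$ and likewise for $\bV$, the conclusion of Lemma~\ref{L.estimate-linearized} combined with Grönwall's lemma gives $\|\bU(t)\|_{H^\sigma}+\|\bV(t)\|_{H^\sigma}\leq C(\varsigma,M)\bigl(\|\bU(0)\|_{H^\sigma}+\|\bV(0)\|_{H^\sigma}\bigr)e^{C(\varsigma,M)t}$, hence $\mathcal N_\sigma(t)\leq C(\varsigma,M)\mathcal N_\sigma(0)\,e^{C(\varsigma,M)t}$.

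The bootstrap closes in two stages. First at $\sigma=s_0$: since $s_0>3/2$, the Sobolev embeddings $H^{s_0}\hookrightarrow W^{1,\infty}$ and $H^{s_0-1}\hookrightarrow L^\infty$, the equations for $\partial_t\bU$, and the equivalence above bound the constant $M$ appearing in Lemma~\ref{L.estimate-linearized} by a fixed polynomial in $\mathcal N_{s_0}$; thus the $\sigma=s_0$ estimate is self-improving on a time $\tau\gtrsim1/M_0$, provided the state stays in $\mathfrak p^{\varsigma/2}$. The latter holds because $\|\partial_t(\barH_\ell+H_\ell,\barU_\ell+U_\ell)\|_{L^\infty}$, and hence — using that $\Fr_-$ depends smoothly on $H_s/H_b$ and $\rho_s/\rho_b$ by Lemma~\ref{L.hyperbolic} — the variation of $\Fr_--|U_b-U_s|/\sqrt{H_b}$, is bounded by the same polynomial in $M_0$. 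Concretely, letting $T^{\star\star}$ be the largest time $\leq\min(T^\star,T/M_0)$ on which $\mathcal N_{s_0}\leq 2M_0$ and the $\mathfrak p^{\varsigma/2}$ condition both hold, the above estimate shows these hold with strict inequalities on $[0,T^{\star\star})$ once $T$ is small enough (depending only on $\varsigma,M_0$), forcing $T^{\star\star}=\min(T^\star,T/M_0)$; with the blow-up criterion of Proposition~\ref{P.WP-small-time-SV2} this yields $T^\star>T/M_0$. Once $\mathcal N_{s_0}\leq 2M_0$ is secured on $[0,T/M_0]$, the constant $M$ is bounded by some $M_1=M_1(\varsigma,M_0)$, so the $\sigma=s$ estimate has coefficients depending only on $M_1$ and Grönwall gives $\mathcal N_s(t)\leq C(\varsigma,M_0)\,\mathcal N_s(0)$ for $t\leq T/M_0$; finally the bound on $\partial_t(H_s,H_b,U_s,U_b)$ in $H^{s-1}$ follows from $\partial_t\bU=-\A^\kappa(\underline\bU+\bU)\partial_x\bU+\kappa\D_1\partial_x^2\bU$ and the tame estimates, noting $\kappa\D_1\partial_x^2\bU=\partial_x(\kappa\partial_xH_s,\kappa\partial_xH_b,0,0)\in H^{s-1}$ with $\|\kappa\partial_xH_\ell\|_{H^s}\leq\mathcal N_s$. (To make the energy manipulations of Lemma~\ref{L.estimate-linearized} rigorous one first works with regularized data and passes to the limit; the hypothesis $H_\ell^0\in H^{s+1}$ is used to guarantee that $\kappa\partial_xH_\ell\in C([0,\tau];H^s)$ on the short interval on which the continuity argument is run, the $H$-equations being transport–diffusion equations that propagate this regularity.)

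The one genuine difficulty — already isolated and resolved by Lemma~\ref{L.estimate-linearized}, which I take as given — is that the explicit symmetrizer $\S^\lambda$ of Lemma~\ref{L.symmetrizer} symmetrizes only the non-diffusive flux $\A^0$, and $\S^\lambda(\A^\kappa-\A^0)$ has an antisymmetric part at top order (through the bolus velocity $\kappa\partial_x H_\ell/(\barH_\ell+H_\ell)$) that a naive energy estimate cannot absorb uniformly in $\kappa$; the two-velocity/BD reformulation~\eqref{eq.SV2-BD} is precisely what allows these terms to be integrated by parts against the genuine viscosity $\kappa\partial_x^2\bV$, at the cost of carrying $\bU$ and $\bV$ (hence $\kappa\partial_xH_\ell$) as a single unknown. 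On our side the residual delicate point is book-keeping: verifying that every commutator remainder is bounded by $\mathcal N_\sigma$ with a $\kappa$-uniform constant. This relies repeatedly on the identity $\kappa\partial_xH_\ell=(\barH_\ell+H_\ell)(U_\ell-V_\ell)$, which converts an excess derivative on $H_\ell$ weighted by $\kappa$ into a controlled derivative of $\kappa\partial_xH_\ell$, and on the fact that $\kappa\|\partial_x^2H_\ell\|_{L^\infty}$ is part of the pointwise budget $M$ in Lemma~\ref{L.estimate-linearized}.
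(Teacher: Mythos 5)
Your proposal is correct and follows essentially the same route as the paper: apply $\Lambda^\sigma$ to~\eqref{eq.SV2-kappa}, to the BD reformulation~\eqref{eq.SV2-BD}, and to the relation $V_\ell=U_\ell-\kappa\partial_x H_\ell/(\barH_\ell+H_\ell)$, treat the resulting commutators as the remainders $\bR_{\bU},\bR_{\bV},R$ of Lemma~\ref{L.estimate-linearized}, close them against the combined norm $\|(H,U,\kappa\partial_x H)\|_{H^\sigma}$ via the identity $\kappa\partial_x H_\ell=(\barH_\ell+H_\ell)(U_\ell-V_\ell)$, and then run Gr\"onwall plus a continuity argument first at the $s_0$ level (to fix $M$ and preserve membership in $\mathfrak p^{\varsigma/2}$ by integrating $\partial_t$ in time) and then at the $s$ level. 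The two-stage bootstrap, the regularization of the data, and the final $H^{s-1}$ bound on the time derivatives all match the paper's argument.
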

\begin{proof}
	We assume that the initial data is smooth, so that $(H_s,H_b,U_s,U_b,V_s,V_b)$ are smooth on their domain of existence. The general case is obtained by regularizing the initial data and passing to the limit, thanks to the persistence of regularity and continuity of the flow map stated in Proposition~\ref{P.WP-small-time-SV2}.
	
	Denote $ V_\ell:= U_\ell-\kappa\frac{\partial_x  H_\ell}{\barH_\ell+H_\ell}$ ($\ell\in\{s,b\}$), $\Lambda^s:=(\Id-\partial_x^2)^{s/2}$ and
	\[(\dot H_s,\dot H_b,\dot U_s,\dot U_b,\dot V_s,\dot V_b):=(\Lambda^s H_s,\Lambda^s H_b,\Lambda^s U_s,\Lambda^s U_b, \Lambda^s V_s,\Lambda^s V_b).\]
	Applying the operator $\Lambda^s$ to~\eqref{eq.SV2-kappa} and~\eqref{eq.SV2-BD}, we obtain
	\[
	\begin{cases}
		\partial_t \dot H_s + (\barH_s+H_s)\partial_x \dot U_s + (\barU_s+U_s)\partial_x \dot H_s =\kappa\partial_x^2 \dot H_s + R_H(H_s,U_s), \\
		\partial_t \dot H_b + (\barH_b+H_b)\partial_x \dot U_b + (\barU_b+U_b)\partial_x \dot H_b =\kappa\partial_x^2 \dot H_b+ R_H(H_b,U_b), \\
		\partial_t \dot U_s + (\barU_s+U_s-\kappa\frac{\partial_x H_s}{\barH_s+H_s})\partial_x \dot U_s +  \partial_x \dot H_s +  \partial_x \dot H_b= R_U(H_s,U_s),\\
		\partial_t \dot U_b + (\barU_b+U_b-\kappa\frac{\partial_x H_b}{\barH_b+H_b})\partial_x \dot U_b + \frac{\rho_s}{\rho_b}\partial_x \dot H_s +  \partial_x \dot H_b=R_U(H_b,U_b),
	\end{cases}
	\]
	and
	\[
	\begin{cases}
		\partial_t \dot H_s + (\barH_s+H_s)\partial_x \dot V_s + (\barU_s+V_s)\partial_x \dot H_s = R_H(H_s,V_s), \\
		\partial_t \dot H_b + (\barH_b+H_b)\partial_x \dot V_b + (\barU_b+V_b)\partial_x \dot H_b = R_H(H_b,V_b), \\
		\partial_t \dot V_s + (\barU_s+V_s-\kappa\frac{\partial_x H_s}{\barH_s+H_s})\partial_x \dot V_s +  \partial_x \dot H_s +  \partial_x \dot H_b=\kappa\partial_x^2\dot V_s+R_U(H_s,V_s),\\
		\partial_t \dot V_b + (\barU_b+V_b-\kappa\frac{\partial_x H_b}{\barH_b+H_b})\partial_x \dot V_b + \frac{\rho_s}{\rho_b}\partial_x \dot H_s +  \partial_x \dot H_b=\kappa\partial_x^2\dot V_s+R_U(H_b,V_b),
	\end{cases}
	\]
	with remainders $R_H$ and $R_V$ defined as
	\[ 	R_H(H,V) := -[\Lambda^s,H]\partial_x V-[\Lambda^s,V]\partial_x H \quad \text{ and } \quad 
	 	R_U(H,V) := -[\Lambda^s,V-\kappa\tfrac{\partial_x H}{\barH+H}]\partial_x V.\]
	Moreover,  applying the operator $\Lambda^s$ to the identity $V_\ell=U_\ell-\kappa\frac{\partial_x H_\ell}{\barH_\ell+H_\ell}$ ($\ell\in\{s,b\}$) yields
	\[ \dot V_\ell = \dot U_\ell-\kappa\frac{\partial_x \dot H_\ell}{\barH_\ell+H_\ell} + R(H_\ell), \]
	with
	\[ R(H)=-\kappa [\Lambda^s,\tfrac{1}{\barH+H}]\partial_x H.\]
	Standard commutator estimates and composition estimates in Sobolev spaces; see {\em e.g.}~\cite[Appendix~B]{Lannes} yield
	\begin{align*} 
		\|R_H(H,V)\|_{L^2}&\leq C \big( \| H\|_{H^{s_0}}+\| V\|_{H^{s_0}}\big)\big( \| H\|_{H^{s}}+\| V\|_{H^{s}}\big), \\
		\|R_U(H,V)\|_{L^2}&\leq C\big( \kappa\| \partial_xH\|_{H^{s_0}}+\| V\|_{H^{s_0}}\big)\big( \kappa\| \partial_x H\|_{H^{s}}+\| V\|_{H^{s}}\big),\\
		\|R(H)\|_{H^1}&\leq C \big(\| H\|_{H^{s_0}}+\kappa\| \partial_x  H\|_{H^{s_0}}\big) \big(\|  H\|_{H^{s}}+\kappa\| \partial_x H\|_{H^{s}}\big) ,
	\end{align*}
	where $C$ is a positive constant depending only on $s,s_0$, $\| H\|_{H^{s_0}}$ and $\inf_{\R}(\barH+H)>0$. 
	
	Moreover, notice that by using the equations~\eqref{eq.SV2-kappa},~\eqref{eq.SV2-BD} and the identity $\kappa\partial_x H_\ell =(\barH_\ell+H_\ell)(U_\ell-V_\ell)$ for $\ell\in\{s,b\}$, we have
	\begin{multline*}  \|  (\partial_t H_s,\partial_t H_b,\partial_t U_s,\partial_t U_b) \|_{H^{s-1}}+ \kappa\|(\partial_x  H_s,\partial_x  H_b)\|_{H^{s}} +\kappa^{-1}\|( U_s- V_s, U_b- V_b)\|_{H^{s-1}}\\ \leq C\, \|  ( H_s, H_b, U_s, U_b, V_s, V_b) \|_{H^{s}} ,
	\end{multline*}
	where the multiplicative constant $C$ depends on $\|  ( H_s, H_b, U_s, U_b, V_s, V_b) \|_{H^{s_0}}$ and $\inf_{\R}(\barH_\ell+ H_\ell)>0$ (for $\ell\in\{s,b\}$).
	
	We may thus apply Lemma~\ref{L.estimate-linearized}, and infer that we can set  $C$  depending on $s_0,M_0,\varsigma$, and $c$ depending only on $\varsigma$, so that as long as
		\begin{equation} \label{eq.H1}
		\forall x\in\R,\quad 	 ( \rho_s,\rho_b,\barH_s+H_s(t,x), \barH_b+H_b(t,x), \barU_s+U_s(t,x), \barU_b+U_b(t,x)) \in \mathfrak p^{\varsigma/2},
	\end{equation}
	and
		\begin{equation} \label{eq.H2}
			\|  ( H_s, H_b, U_s, U_b, V_s, V_b)(t,\cdot) \|_{H^{s_0}} 
	\leq 2c M_0
\end{equation}
	one has (using the standard $H^{s_0}\subset W^{1,\infty}$ continuous embedding and the fact that $0\leq \barH_s,\barH_b\leq 1$ and $\barU_s+\barU_b=0$)
	\begin{multline*}
	 \|  ( H_s, H_b, U_s, U_b, V_s, V_b) (t,\cdot)\|_{H^{s_0}}
	\leq c\, M_0 \exp(C\, M_0\, t)\\
	+C M_0\int_0^t \|  ( H_s, H_b, U_s, U_b, V_s, V_b)(t',\cdot) \|_{H^{s_0}}  \exp(C\, M_0\, (t-t')) \dd t.
\end{multline*}
	Applying Gronwall's Lemma, we find that if $C M_0 t$ is smaller than a universal constant, then 
	\[\| ( H_s, H_b, U_s, U_b, V_s, V_b)(t,\cdot) \|_{H^{s}}
	\leq \frac{3c}2 \, M_0.\]
	What is more  we have (augmenting $C$ if necessary)
	\[ |( H_s(t,\cdot)- H_s^0, H_b(t,\cdot)- H_b^0, U_s(t,\cdot)- U_s^0, U_b(t,\cdot)- U_b^0)|\leq \int_0^t \|(\partial_t  H_s,\partial_t  H_b,\partial_t  U_s,\partial_t  U_b)\|_{L^\infty}\leq C M_0 t.\] 
	 Hence lowering further   $C M_0 t$, we infer that $ ( \rho_s,\rho_b,H_s(t,x), H_b(t,x), U_s(t,x), U_b(t,x)) \in \mathfrak p^{\varsigma/4}$ for all $x\in\R$. By the usual continuity argument we infer that the assumptions~\eqref{eq.H1} and~\eqref{eq.H2} do hold for $t\in[0,T/M_0]$ with $T$ depending on $s,s_0,M_0,\varsigma$. This yields the lower bound on the maximal time of existence and the claimed upper bound on the solution follows from the above estimates replacing $s_0$ with $s$.
\end{proof}

Combined together, Proposition~\ref{P.WP-small-time-SV2} and Proposition~\ref{P.uniform-WP} yield a
time of existence for solutions to~\eqref{eq.SV2-kappa} emerging from sufficiently regular initial data which is independent of $\kappa\in(0,1]$. The following result describes the behavior of these solutions as $\kappa\searrow 0$.

\begin{Prop}[Convergence]\label{P.convergence-SV2}
	Let $s\geq s_0>3/2$, $\varsigma\in(0,1)$ and $M_0>0$. 
	
	Let $\kappa\in(0,1]$, $(\rho_s,\rho_b, \barH_s,\barH_b,\barU_s,\barU_b)\in\R^6$ such that $\barH_s+\barH_b=1$ and $\barU_s+\barU_b=0$, and $( H_s^0, H_b^0, U_s^0, U_b^0)\in H^{s+2}(\R)^4$ be such that 
	the hyperbolicity condition holds
	\[ \forall x\in\R,\quad ( \rho_s,\rho_b,\barH_s+ H_s^0(x), \barH_b + H_b^0(x), \barU_s+ U_s^0(x),\barU_b+ U_b^0(x)) \in \mathfrak p^\varsigma,
	\]
	where $\mathfrak p^\varsigma$ is defined in~\eqref{eq.hyperbolicity-condition}, and
	\[ \| ( H_s^0, H_b^0, U_s^0, U_b^0,\kappa\partial_x H_s^0,\kappa\partial_x H_b^0)\|_{H^{s_0}}\leq M_0.\]
	Denote 
	\begin{itemize}
		\item $(H_s,H_b,U_s,U_b)$ the maximal solution to the non-diffusive system~\eqref{eq.SV2} emerging from the initial data \[(H_s,H_b,U_s,U_b)\big\vert_{t=0}=( H_s^0, H_b^0,  U_s^0,U_b^0)\]
		 as defined in Proposition~\ref{P.WP-SV};
		\item for all $\kappa>0$, $(H_s^\kappa,H_b^\kappa,U_s^\kappa,U_b^\kappa)$ the maximal solution to system~\eqref{eq.SV2-kappa} emerging from the initial data 
		\[(H_s^\kappa,H_b^\kappa,U_s^\kappa,U_b^\kappa)\big\vert_{t=0}=( H_s^0, H_b^0,  U_s^0,U_b^0)\]
		 as defined in Proposition~\ref{P.WP-small-time-SV2};
		\item $T_0>0$ and $c_0>1$ such that for all $t\in[0,T_0]$,
		\[ \forall x\in\R,\quad  ( \rho_s,\rho_b,\barH_s+H_s(t,x), \barH_b+H_b(t,x),\barU_s+U_s(t,x),\barU_b+U_b(t,x)) \in \mathfrak p^{\varsigma/c_0}\]
		and
		\[\| ( H_s, H_b, U_s, U_b)(t,\cdot)\|_{H^{s+2}}\leq c_0 M_0.\] 

	\end{itemize} 
	Then there exists $\kappa_0>0$ and $C>0$, both depending only on $s,s_0,\varsigma,M_0,T_0,c_0$, such that for all $\kappa \in(0,\kappa_0]$, $(H_s^\kappa,H_b^\kappa,U_s^\kappa,U_b^\kappa)(t,\cdot)$ is well-defined for all $t\in[0,T_0]$ and satisfies the hyperbolicity condition
			\[ \forall x\in\R,\quad  ( \rho_s,\rho_b,\barH_s+H_s^\kappa(t,x), \barH_b+H_b^\kappa(t,x),\barU_s+U_s^\kappa(t,x),\barU_b+U_b^\kappa(t,x)) \in \mathfrak p^{\varsigma/(2c_0)}\]
			and the upper bound 
	\[\| ( H_s^\kappa, H_b^\kappa, U_s^\kappa, U_b^\kappa)(t,\cdot)\|_{H^{s}} \leq 2 c_0 M_0.\]
		Moreover, one has for all $t\in[0,T_0]$
		\[\| ( H_s^\kappa- H_s, H_b^\kappa- H_b, U_s^\kappa- U_s, U_b^\kappa- U_b)(t,\cdot)\|_{H^{s}} \leq \kappa\, C M_0.\] 
\end{Prop}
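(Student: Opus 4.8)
The plan is to run a Gronwall-type argument on the difference $(\dot H_s,\dot H_b,\dot U_s,\dot U_b):=(H_s^\kappa-H_s,H_b^\kappa-H_b,U_s^\kappa-U_s,U_b^\kappa-U_b)$, but to do so I must account for the fact that the diffusive system involves the ``total velocities'' $V_\ell^\kappa=U_\ell^\kappa-\kappa\partial_x H_\ell^\kappa/(\barH_\ell+H_\ell^\kappa)$, which are the variables enjoying the favorable parabolic estimate of Lemma~\ref{L.estimate-linearized}. So I will introduce $\dot V_\ell := V_\ell^\kappa - V_\ell$ where, for the non-diffusive reference solution, I simply set $V_\ell := U_\ell$ (so that $\dot V_\ell = \dot U_\ell - \kappa\partial_x H_\ell^\kappa/(\barH_\ell+H_\ell^\kappa) = \dot U_\ell - \kappa\partial_x\dot H_\ell/(\barH_\ell+H_\ell^\kappa) + R_\ell$ with a remainder $R_\ell$ of size $O(\kappa)$ controlled using $\|H_\ell\|_{H^{s+1}}\le c_0 M_0$). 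I then apply $\Lambda^s$ to the equations for the differences: $(\dot H_s,\dot H_b,\dot U_s,\dot U_b)$ solves a linearized system of the type $\partial_t\dot\bU+\A^\kappa(\bU^\kappa)\partial_x\dot\bU=\kappa\D_1\partial_x^2\dot\bU+\bR_{\bU}$, and likewise $(\dot H_s,\dot H_b,\dot V_s,\dot V_b)$ solves $\partial_t\dot\bV+\A^\kappa(\bV^\kappa)\partial_x\dot\bV=\kappa\D_2\partial_x^2\dot\bV+\bR_{\bV}$, where the source terms $\bR_{\bU},\bR_{\bV}$ collect (i) commutators from $\Lambda^s$, (ii) the terms produced because the reference solution solves the $\kappa=0$ system rather than the $\kappa$-system — these include the ``missing'' $\kappa\partial_x^2 H_\ell$ and $\kappa\partial_x H_\ell\,\partial_x U_\ell/(\barH+H)$ contributions, each of which is $O(\kappa)$ in the relevant Sobolev norm thanks to the $H^{s+2}$ bound on the reference solution, and (iii) quadratic-in-difference terms like $(\A^\kappa(\bU^\kappa)-\A^\kappa(\bU))\partial_x\dot\bU$ which are $O(\|\dot\bU\|_{H^s})$ by composition estimates.

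The core of the argument is then a bootstrap. On the (non-trivial but finite, by hypothesis) time interval $[0,T_0]$ the reference solution is bounded in $H^{s+2}$ by $c_0M_0$ and stays in $\mathfrak p^{\varsigma/c_0}$; and by Proposition~\ref{P.uniform-WP} (applied with the enlarged parameters $2c_0M_0$, $\varsigma/(2c_0)$) the diffusive solution exists on an interval of length $\gtrsim 1/(c_0M_0)$ independent of $\kappa$. I make the bootstrap assumptions that on $[0,T]$ (with $T\le T_0$) one has $\|(\bU^\kappa,\bV^\kappa)\|_{H^s}\le 2c_0M_0$ and the hyperbolicity $\mathfrak p^{\varsigma/(2c_0)}$ holds, which is exactly the data (via $H^{s_0}\subset W^{1,\infty}$ and the identity $\kappa\partial_x H_\ell^\kappa=(\barH_\ell+H_\ell^\kappa)(U_\ell^\kappa-V_\ell^\kappa)$ to control $\kappa\partial_x^2 H_\ell^\kappa$ in $L^\infty$) needed to invoke Lemma~\ref{L.estimate-linearized} with $M\sim c_0M_0$. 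Feeding the source-term bounds into the conclusion of Lemma~\ref{L.estimate-linearized} gives, at the level $s$,
\[
\|\dot\bU(t)\|_{H^s}+\|\dot\bV(t)\|_{H^s}\;\le\; C\int_0^t\Big(\kappa\,c_0M_0 + c_0M_0\,\big(\|\dot\bU(t')\|_{H^s}+\|\dot\bV(t')\|_{H^s}\big)\Big)\,e^{C c_0 M_0(t-t')}\,\dd t,
\]
since the initial difference vanishes; Gronwall then yields $\|\dot\bU(t)\|_{H^s}+\|\dot\bV(t)\|_{H^s}\le \kappa\,C M_0 e^{C c_0 M_0 t}$ on $[0,T_0]$. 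Choosing $\kappa_0$ small enough that this bound is $\le c_0M_0/2$ (and similarly small for the norm of $\kappa\partial_x\dot H_\ell$, which follows from the same identity) makes $\|\bU^\kappa\|_{H^s}\le \|\bU\|_{H^s}+\|\dot\bU\|_{H^s}\le 3c_0M_0/2<2c_0M_0$, and lowering $\kappa_0$ further keeps the hyperbolicity strictly inside $\mathfrak p^{\varsigma/(2c_0)}$; the usual continuity argument then closes the bootstrap on all of $[0,T_0]$ and in particular $T^\star>T_0$, since the $H^{s_0}$ norm does not blow up. The final convergence estimate is the Gronwall bound itself.

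The step I expect to be the main obstacle is the careful bookkeeping of the source terms $\bR_{\bU}$ and $\bR_{\bV}$: one has to verify that every term which is not linear in the difference $\dot\bU$ carries an explicit factor $\kappa$, and in particular that the discrepancy between $\A^\kappa(\bV^\kappa)$ and the operator governing the reference $\dot\bV$-equation (for which I set $V_\ell=U_\ell$, so the ``bolus'' term $-\kappa\partial_x H_\ell/H_\ell$ is genuinely absent) is $O(\kappa)$ in operator norm on $H^{s-1}$; this requires the extra two derivatives of regularity on the reference solution (hence the $H^{s+2}$ hypothesis) so that $\kappa\partial_x^2 H_\ell$ and $\kappa\partial_x H_\ell$ are bounded in $H^s$. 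A secondary technical point is to make precise, for the non-diffusive reference solution, the decomposition $\dot V_\ell=\dot U_\ell-\kappa\partial_x\dot H_\ell/(\barH_\ell+H_\ell^\kappa)+R_\ell$ with $\|R_\ell\|_{H^1}\lesssim\kappa$, so that the hypothesis~\eqref{eq.rell} of Lemma~\ref{L.estimate-linearized} applies with the term $M\|R\|_{H^1}$ contributing only at order $\kappa$; once these accountings are in place the Gronwall closure is routine.
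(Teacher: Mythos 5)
Your proposal follows essentially the same route as the paper's proof: the same choice $V_\ell:=U_\ell$ for the non-diffusive solution versus the total velocity $V_\ell^\kappa:=U_\ell^\kappa-\kappa\tfrac{\partial_x H_\ell^\kappa}{\barH_\ell+H_\ell^\kappa}$ for the diffusive one, the same application of Lemma~\ref{L.estimate-linearized} to the $\Lambda^s$-differentiated differences with $O(\kappa)$ source terms controlled by the $H^{s+2}$ bound on the reference solution, and the same bootstrap/Gronwall/continuity closure. The only slip is the assertion that the initial difference vanishes: $\dot V_\ell\vert_{t=0}=-\kappa\,\Lambda^s\big(\tfrac{\partial_x H_\ell^0}{\barH_\ell+H_\ell^0}\big)$ is nonzero but $O(\kappa)$, which the paper accounts for explicitly and which does not affect the final estimate.
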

\begin{proof}
	Denote $V_\ell:=U_\ell$  and $V_\ell^\kappa:=U_\ell^\kappa-\kappa\frac{\partial_x H_\ell^\kappa}{\barH_\ell+H_\ell^\kappa}$ ($\ell\in\{s,b\}$), $\Lambda^{s}:=(\Id-\partial_x^2)^{s/2}$, and
	\[(\dot H_s,\dot H_b,\dot U_s,\dot U_b,\dot V_s,\dot V_b):=(\Lambda^{s}( H_s^\kappa-H_s),\Lambda^{s}( H_b^\kappa- H_b),\Lambda^{s}( U_s^\kappa- U_s),\Lambda^{s}( U_b^\kappa- U_b), \Lambda^{s}( V_s^\kappa- V_s),\Lambda^{s}( V_b^\kappa- V_b)).\]
	Substracting~\eqref{eq.SV2-kappa},~\eqref{eq.SV2-BD} and~\eqref{eq.SV2} we obtain
	\[
	\begin{cases}
		\partial_t \dot H_s + (\barH_s+H_s^\kappa)\partial_x \dot U_s + (\barU_s+U_s^\kappa)\partial_x \dot H_s =\kappa\partial_x^2 \dot H_s +  \kappa \Lambda^{s}\partial_x^2 H_s + R_H(H_s^\kappa,U_s^\kappa,H_s,U_s), \\
		\partial_t \dot H_b + (\barH_b+H_b^\kappa)\partial_x \dot U_b + (\barU_b+U_b^\kappa)\partial_x \dot H_b =\kappa\partial_x^2 \dot H_b+  \kappa \Lambda^{s}\partial_x^2 H_b + R_H(H_b^\kappa,U_b^\kappa,H_b,U_b), \\
		\partial_t \dot U_s + (\barU_s+U_s^\kappa-\kappa\frac{\partial_x H_s^\kappa}{\barH_s+H_s^\kappa})\partial_x \dot U_s +  \partial_x \dot H_s +  \partial_x \dot H_b= \kappa\Lambda^s \big(\frac{\partial_x H_s^\kappa}{\barH_s+H_s^\kappa} \partial_x U_s\big)+ R_U(H_s^\kappa,U_s^\kappa,H_s,U_s),\\
		\partial_t \dot U_b + (\barU_b+U_b^\kappa-\kappa\frac{\partial_x H_b^\kappa}{\barH_b+H_b^\kappa})\partial_x \dot U_b + \frac{\rho_s}{\rho_b}\partial_x \dot H_s +  \partial_x \dot H_b=\kappa\Lambda^s \big(\frac{\partial_x H_b^\kappa}{\barH_b+H_b^\kappa} \partial_x U_b\big) + R_U(H_b^\kappa,U_b^\kappa,H_b,U_b),
	\end{cases}
	\]
	and
	\[
	\begin{cases}
		\partial_t \dot H_s + (\barH_s+H_s^\kappa)H_s^\kappa\partial_x \dot V_s + (\barU_s+V_s^\kappa)\partial_x \dot H_s = R_H(H_s^\kappa,V_s^\kappa,H_s,V_s), \\
		\partial_t \dot H_b + (\barH_b+H_b^\kappa)H_b^\kappa\partial_x \dot V_b + (\barU_b+V_b^\kappa)\partial_x \dot H_b = R_H(H_b^\kappa,V_b^\kappa,H_b,V_b), \\
		\partial_t \dot V_s + (\barU_s+V_s^\kappa-\kappa\frac{\partial_x H_s^\kappa}{\barH_s+H_s^\kappa})\partial_x \dot V_s +  \partial_x \dot H_s +  \partial_x \dot H_b\\
		\hspace{5cm}=\kappa\partial_x^2\dot V_s+  \kappa \Lambda^{s}\partial_x^2 V_s+\kappa\Lambda^s \big(\frac{\partial_x H_s^\kappa}{\barH_s+H_s^\kappa} \partial_x V_s\big)+R_U(H_s^\kappa,V_s^\kappa,H_s,V_s),\\
		\partial_t \dot V_b + (\barU_b+V_b^\kappa-\kappa\frac{\partial_x H_b^\kappa}{\barH_b+H_b^\kappa})\partial_x \dot V_b + \frac{\rho_s}{\rho_b}\partial_x \dot H_s +  \partial_x \dot H_b\\
		\hspace{5cm}=\kappa\partial_x^2\dot V_b+  \kappa \Lambda^{s}\partial_x^2 V_b+\kappa\Lambda^s \big(\frac{\partial_x H_b^\kappa}{\barH_b+H_b^\kappa} \partial_x V_b\big)+R_U(H_b^\kappa,V_b^\kappa,H_b,V_b),
	\end{cases}
	\]
	where, for any $\ell\in\{s,b\}$, we denote
	\begin{align*} 
		R_H(H_\ell^\kappa,U_\ell^\kappa,H_\ell,U_\ell) &= 
			-\Lambda^{s} \big((H_\ell^\kappa-H_\ell)\partial_x  U_\ell + (U_\ell^\kappa-U_\ell)\partial_x H_\ell\big)\\
			&\quad 
			-[\Lambda^{s},H_\ell^\kappa ] (\partial_x U_\ell^\kappa-\partial_x U_\ell)
			-[\Lambda^{s},U_\ell^\kappa ] (\partial_x H_\ell^\kappa-\partial_x H_\ell), \\
		R_U(H_\ell^\kappa,U_\ell^\kappa,H_\ell,U_\ell) &= 
			-\Lambda^{s} \big((U_\ell^\kappa- U_\ell)\partial_x U_\ell\big)
			-[\Lambda^{s},U_\ell^\kappa-\kappa\tfrac{\partial_x H_\ell^\kappa}{\barH_\ell+H_\ell^\kappa} ](\partial_x U_\ell^\kappa -\partial_x U_\ell).
	\end{align*}
	Moreover, one has by definition 
	\[ \dot V_\ell= \dot U_\ell -\kappa\frac{\partial_x \dot H_\ell}{\barH_\ell+H_\ell^\kappa}+ R(H_\ell^\kappa,H_\ell)  \]
	where
	\[  R(H_\ell^\kappa,H_\ell) =-\kappa [\Lambda^{s}, \tfrac1{\barH_\ell+H_\ell^\kappa}]\partial_x  H_\ell^\kappa-\kappa\frac{\partial_x \Lambda^{s}H_\ell}{\barH_\ell+H_\ell^\kappa}.\]
	Standard commutator estimates and composition estimates in Sobolev spaces; see {\em e.g.}~\cite[Appendix~B]{Lannes} yield
	\begin{align*} 
		\|R_H(H_\ell^\kappa,V_\ell^\kappa,H_\ell,V_\ell)\|_{L^2}&\leq C \big( \| \partial_x H_\ell\|_{H^{s}}+\| \partial_x V_\ell\|_{H^{s}}+\| \partial_x H_\ell^\kappa\|_{H^{s-1}}+\| \partial_x V_\ell^\kappa\|_{H^{s-1}}\big)\big( \| \dot H_\ell\|_{L^2}+\| \dot V_\ell\|_{L^2}\big), \\
		\|R_U(H_\ell^\kappa,V_\ell^\kappa,H_\ell,V_\ell)\|_{L^2}&\leq C\big( \| \partial_x V_\ell\|_{H^{s}}+\kappa\| \partial_x H_\ell^\kappa\|_{H^{s}}+\| \partial_x V_\ell^\kappa\|_{H^{s-1}}\big)\| \dot V_\ell\|_{L^2},\\
		\|R(H_\ell^\kappa,H_\ell)\|_{H^1}&\leq C \kappa\big( \| \partial_x H_\ell^\kappa\|_{H^{s}} + \| \partial_x H_\ell\|_{H^{s+1}}\big),
	\end{align*}
	where $C$ is a positive constant depending only on $s$, $s_0$, $\| H_\ell^\kappa\|_{H^{s_0}}$   
	 and 
	$\inf_{\R} \barH_\ell+H_\ell^\kappa>0$ for any $\ell\in\{s,b\}$. 
	
	Moreover notice that by the equations~\eqref{eq.SV2-kappa} and~\eqref{eq.SV2-BD} and using the identity $\kappa\partial_x H_\ell^\kappa =(\barH_\ell+H_\ell^\kappa)(U_\ell^\kappa-V_\ell^\kappa)$ we have
	\begin{multline*}  \|  (\partial_t H_s^\kappa,\partial_t H_b^\kappa,\partial_t U_s^\kappa,\partial_t U_b^\kappa) \|_{H^{s-1}} +\kappa\|(\partial_x  H_s^\kappa,\partial_x H_b^\kappa)\|_{H^{s}} + \kappa^{-1}\|( U_s^\kappa- V_s^\kappa, U_b^\kappa- V_b^\kappa)\|_{H^{s-1}}\\
		 \leq C \|  ( H_s^\kappa, H_b^\kappa, U_s^\kappa, U_b^\kappa, V_s^\kappa, V_b^\kappa) \|_{H^{s}} ,
	\end{multline*}
	where the multiplicative constant $C$ depends on $\|  ( H_s^\kappa, H_b^\kappa, U_s^\kappa, U_b^\kappa, V_s^\kappa, V_b^\kappa) \|_{H^{s_0}}$ and $\inf_{\R} \barH_\ell+H_\ell^\kappa>0$ for ${\ell\in\{s,b\}}$.
	
	We may thus apply Lemma~\ref{L.estimate-linearized}, and infer that we can set  $C$  depending on $s,s_0,M_0,\varsigma$, and $c$ depending only on $\varsigma$, so that as long as
		\begin{equation} \label{eq.H1kappa}
		\forall x\in\R,\quad 	 ( \rho_s,\rho_b,\barH_s+H_s^\kappa(t,x), \barH_b+H_b^\kappa(t,x), \barU_s+U_s^\kappa(t,x), \barU_b+U_b^\kappa(t,x)) \in \mathfrak p^{\varsigma/(2c_0)},
	\end{equation}
	and
	\begin{equation} \label{eq.H2kappa}
		\|  ( H_s^\kappa, H_b^\kappa, U_s^\kappa, U_b^\kappa, V_s^\kappa, V_b^\kappa)(t,\cdot) \|_{H^{s}} 
		\leq 2c_0 M_0
	\end{equation}
	one has, using  that $(\dot H_s,\dot H_b,\dot U_s,\dot U_b,\dot V_s,\dot V_b)\vert_{t=0}=(0,0,0,0,-\kappa \Lambda^{s} (\frac{\partial_x  H_s^\kappa}{\barH_s+H_s^\kappa})\vert_{t=0},-\kappa \Lambda^{s} (\frac{\partial_x  H_b^\kappa}{\barH_b+H_b^\kappa})\vert_{t=0})$, 
	\begin{multline*}
		\|  (\dot H_s,\dot H_b,\dot U_s,\dot U_b,\dot V_s,\dot V_b) (t,\cdot)\|_{L^2}+	c\kappa^{1/2}	\|  (\partial_x\dot H_s,\partial_x\dot H_b,\partial_x\dot V_s,\partial_x\dot V_b) \|_{L^2(0,t;L^2)} \leq \kappa C M_0 \exp(C c_0 M_0 t)
		\\
		+
		C \int_0^t \Big(  c_0M_0\|  (\dot H_s,\dot H_b,\dot U_s,\dot U_b,\dot V_s,\dot V_b,\kappa \partial_x\dot H_s,\kappa \partial_x\dot H_b) (t',\cdot)\|_{L^2} \\
		+ \kappa \|  (\partial_x H_s^\kappa,\partial_x H_b^\kappa,\partial_x^2  H_s,\partial_x^2  H_b,\partial_x^2  V_s,\partial_x^2  V_b,\partial_x U_s,\partial_x U_b,\partial_x V_s,\partial_x V_b) (t',\cdot)\|_{H^s}\Big)
		\exp(C\, c_0M_0\, (t-t')) \dd t.
	\end{multline*}
	Now, we use in the integrand the triangle inequality
	\[  \|  (\partial_x H_s^\kappa,\partial_x H_b^\kappa) (t',\cdot)\|_{H^s} \leq  \|  (\partial_x\dot H_s,\partial_x\dot H_b) (t',\cdot)\|_{L^2}+ \|  (\partial_x H_s,\partial_x H_b) (t',\cdot)\|_{H^s}.\]
	The first contribution may be absorbed by the left-hand side if $\kappa$ is sufficiently small (depending on $c,C,c_0M_0,T_0$), and the second contribution is estimated, as other terms, using the assumption
		\[	\sup\big(\big\{\| ( H_s, H_b, U_s, U_b)(t,\cdot)\|_{H^{s+2}} \  : \ t\in[0,T_0]\big\}\big)\leq c_0 M_0. \]
	Applying then Gronwall's Lemma, we find that 
	\begin{equation}\label{eq.control-difference} \|  (\dot H_s,\dot H_b,\dot U_s,\dot U_b,\dot V_s,\dot V_b) (t,\cdot)\|_{L^2} \leq \kappa C c_0 M_0 K \end{equation}
	where $K$ depends only on $C c_0 M_0 T_0$.
	
	By using again the triangle inequality, we can lower further $\kappa$ (depending on $c_0$) so that~\eqref{eq.control-difference} implies 
	\[
		\forall x\in\R,\quad 	 ( \rho_s,\rho_b,H_s^\kappa(t,x), H_b^\kappa(t,x), U_s^\kappa(t,x), U_b^\kappa(t,x)) \in \mathfrak p^{2\varsigma/(3c_0)}
	\]
	and
	\[
		\|  ( H_s^\kappa, H_b^\kappa, U_s^\kappa, U_b^\kappa, V_s^\kappa, V_b^\kappa)(t,\cdot) \|_{H^{s}} 
		\leq \frac32 c_0 M_0.
	\]
Hence by the usual continuity argument we infer that $(H_s^\kappa,H_b^\kappa,U_s^\kappa,U_b^\kappa)(t,\cdot)$ is well-defined for all $t\in[0,T_0]$, and~\eqref{eq.H1kappa}-\eqref{eq.H2kappa}-\eqref{eq.control-difference} hold. This concludes the proof.
\end{proof}

\section{The hydrostatic Euler equations}\label{S.hydro}

In this section we study the stability of the hydrostatic Euler equations for stratified flows:
\begin{equation} \label{eq.hydro-CV}
\begin{aligned}
\partial_t  h + \partial_x ((1+  h) (  \baru +  u)) & = \kappa \partial_x^2  h,\\
\partial_t  u + \left( \baru+ u - \kappa \frac{\partial_x  h}{1+ h}\right)\partial_x u +\frac1{\barrho} \partial_x  \Psi &=0, \\
\end{aligned}
\end{equation}
where the Montgomery potential $\Psi$ is given by
\begin{equation}\label{eq.def-mont-CV}
\Psi(\cdot,r)= \barrho (r)  \int_{-1}^r  h (\cdot, r') \dd r' + \int_r^0  \barrho (r')  h (\cdot, r') \dd r'  =: (\M[\barrho]   h)(r).
\end{equation}

We recall that our stability results must accommodate with solutions generated by the bilayer system that are piecewise constant and allow the comparison with continuously stratified flows. Hence we must allow for deviations that can be large pointwise, while smallness stems from integration (with respect to the $r$-variable). In practice we shall manipulate simultaneously the pointwise as well as the $L^1_r$ topologies depending on the need. This is the case for instance in the following Lemma, measuring the Lipschitz continuity with respect to the density variable of the Montgomery operator $\M[\barrho]$ defined~\eqref{eq.def-mont-CV}.

\begin{Lem}\label{L.stability-M}
	Let $\barM>0$. There exists $C>0$ such that for any $\barrho_\ell$  ($\ell\in\{1,2\}$) such that 
	\[ \| (\barrho_\ell,\tfrac1{\barrho_\ell})\|_{L^1_r\times L^\infty_r }\leq \barM,\]
	and for any $h\in L^\infty_r$, one has for almost any $r\in (-1,0)$,
	\[  \Big| \Big(\frac1{\barrho_1}\M[\barrho_1]h-\frac1{\barrho_2}\M[\barrho_2]h\Big)(r)\Big|\leq \Big(\barM^3  |\barrho_1-\barrho_2|(r) +\barM \|\barrho_1-\barrho_2\|_{L^1_r} \Big)\|h\|_{L^\infty_r}\]
\end{Lem}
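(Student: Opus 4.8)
The plan is to write the difference $\frac1{\barrho_1}\M[\barrho_1]h-\frac1{\barrho_2}\M[\barrho_2]h$ explicitly from the definition~\eqref{eq.def-mont-CV} and then split it into pieces that are each controlled by one of the two terms on the right-hand side. Recall that
\[ \frac1{\barrho(r)}(\M[\barrho]h)(r) = \int_{-1}^r h(r')\,\dd r' + \frac1{\barrho(r)}\int_r^0 \barrho(r') h(r')\,\dd r'.\]
The crucial observation is that the first summand $\int_{-1}^r h(r')\,\dd r'$ does not depend on $\barrho$ at all, so it cancels entirely in the difference. Hence only the second summand contributes, and I am left with estimating
\[ \Big(\frac1{\barrho_1}\M[\barrho_1]h-\frac1{\barrho_2}\M[\barrho_2]h\Big)(r) = \frac1{\barrho_1(r)}\int_r^0 \barrho_1(r') h(r')\,\dd r' - \frac1{\barrho_2(r)}\int_r^0 \barrho_2(r') h(r')\,\dd r'.\]

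Next I would insert and subtract a mixed term, say $\frac1{\barrho_1(r)}\int_r^0 \barrho_2(r') h(r')\,\dd r'$, to get two contributions. The first, $\frac1{\barrho_1(r)}\int_r^0 (\barrho_1-\barrho_2)(r')\,h(r')\,\dd r'$, is bounded in absolute value by $\frac1{\barrho_1(r)}\|h\|_{L^\infty_r}\|\barrho_1-\barrho_2\|_{L^1_r}\le \barM\,\|\barrho_1-\barrho_2\|_{L^1_r}\,\|h\|_{L^\infty_r}$, using $\|\tfrac1{\barrho_1}\|_{L^\infty_r}\le\barM$. The second contribution is $\big(\frac1{\barrho_1(r)}-\frac1{\barrho_2(r)}\big)\int_r^0 \barrho_2(r') h(r')\,\dd r'$; here I bound the integral by $\|h\|_{L^\infty_r}\|\barrho_2\|_{L^1_r}\le \barM\,\|h\|_{L^\infty_r}$, and I rewrite the prefactor as $\frac{\barrho_2(r)-\barrho_1(r)}{\barrho_1(r)\barrho_2(r)}$, whose modulus is at most $\barM^2|\barrho_1-\barrho_2|(r)$ using $\|\tfrac1{\barrho_1}\|_{L^\infty_r},\|\tfrac1{\barrho_2}\|_{L^\infty_r}\le\barM$. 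Multiplying these gives $\barM^3|\barrho_1-\barrho_2|(r)\,\|h\|_{L^\infty_r}$. Adding the two contributions yields exactly the claimed bound, with $C$ absorbed into the explicit constants (indeed one can take $C=1$ here, or any $C\ge 1$ if one prefers a non-sharp statement).

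There is essentially no serious obstacle: the statement is a routine Lipschitz estimate and the only thing to be careful about is the bookkeeping of which $L^1_r$ versus $L^\infty_r$ norm is applied to $\barrho_i$, $\tfrac1{\barrho_i}$, and $h$ at each step, so that the pointwise term $|\barrho_1-\barrho_2|(r)$ and the integrated term $\|\barrho_1-\barrho_2\|_{L^1_r}$ come out with the correct powers of $\barM$. The mild subtlety worth flagging is that everything should be read for \emph{almost every} $r\in(-1,0)$, since $\barrho_i$ and $h$ are only $L^\infty_r$ (resp. $L^1_r$) functions; the pointwise manipulations above are legitimate a.e., and the integral terms $\int_r^0$ are well-defined for every $r$.
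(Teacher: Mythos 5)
Your proof is correct and follows essentially the same route as the paper: the $\barrho$-independent part $\int_{-1}^r h$ cancels, and the remaining difference is split by inserting a mixed term (the paper groups it as $(\tfrac1{\barrho_1}-\tfrac1{\barrho_2})\int_r^0\barrho_1 h+\tfrac1{\barrho_2}\int_r^0(\barrho_1-\barrho_2)h$, the symmetric counterpart of your splitting), with the same $\barM$-bookkeeping. The paper's proof consists only of displaying that identity, so your write-up simply makes explicit the estimates it leaves to the reader.
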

\begin{proof}
	\[\Big(\frac1{\barrho_1}\M[\barrho_1]h-\frac1{\barrho_2}\M[\barrho_2]h\Big)(r) = \big(\frac1{\barrho_1(r)}-\frac1{\barrho_2(r)}\big) \int_r^0  \barrho_1 (r')  h (r') \dd r'+\frac1{\barrho_2(r)} \int_r^0  ( \barrho_1 (r') -\barrho_2 (r')) h (r') \dd r' 
	.\]
\end{proof}

Note also that we are seeking stability estimates for the system~\eqref{eq.hydro-CV}-\eqref{eq.def-mont-CV} with respect to perturbations of the equations ---in particular through $\barrho \approx \barrho_{\rm bl}$ and  $\baru\approx \baru_{\rm bl}$--- and with respect to perturbations of the initial data.
\medskip

In Section~\ref{S.stability-hydro}, we first state a local well-posedness result associated with the initial-value problem for the system~\eqref{eq.hydro-CV}-\eqref{eq.def-mont-CV}, and then provide some stability estimates. These two results by themselves are not sufficient to bootstrap in a standard manner the strong convergence of solutions as the size of deviations shrink, because the topology involved in the first result, namely $L^\infty_r$, is stronger than the topology used in the second result, which is roughly speaking $L^1_r$. For that matter we introduce and study in Section~\ref{S.approx-hydro} a refined approximate solution which, compared with the original reference solution, improves the description of solutions associated with nearby profiles. Specifically, the refined approximate solution satisfies the following three properties:
\begin{enumerate}[i.]
	\item it is well-defined and controlled on a time interval which is uniform with respect to $\kappa\in(0,1]$;
	\item the difference with respect to the nearby solution is controlled for the strong norm associated with $L^\infty_r$;
	\item the difference with respect to the reference solution is controlled for the weak norm associated with $L^1_r$.
\end{enumerate}
The resulting convergence result, Proposition~\ref{P.convergence-hydro}, is stated and proved in Section~\ref{S.convergence-hydro}.

\subsection{Stability estimates}\label{S.stability-hydro}

\begin{Prop}[Well-posedness] \label{P.WP-small-time-hydro}
	Let $s\geq s_0>3/2$, $\varsigma\in(0,1)$, $\barM,M_0>0$ and $c>1$. There exists
	 $T>0$ such that the following holds. 
	
	For all  $\kappa\in(0,1]$,  all $(\barrho,\baru)\in L^\infty((-1,0))$ such that
	\[\| (\baru,\barrho,\tfrac1\barrho)\|_{L^\infty_r }\leq \barM,\]
	and all $( h^0,u^0)\in  L^\infty((-1,0);H^{s}(\R)^2)$ such that for almost all $ r\in (-1,0)$,
	\[  \forall x\in\R,\quad   1+h^0\geq \varsigma ,\]
	 and
	\[ \| ( h^0, u^0)\|_{L^\infty_r H^{s_0}_x}\leq M_0\]
	 there exists a unique $( h,u)\in C([0,T^\star);L^\infty((-1,0);  H^s(\R)^2)$  maximal-in-time strong solution to~\eqref{eq.hydro-CV}--\eqref{eq.def-mont-CV} emerging from the initial data $(h,u)\big\vert_{t=0}=( h^0, u^0)$.
	
	Moreover, $T^\star> \kappa T$ and for any $t\in[0,\kappa T]$ and almost all $ r\in (-1,0)$ one has
	\[  \forall x\in\R,\quad    1+h(t,x,r) \geq \varsigma/c \]
	and
	\[  \max(\{\| ( h,u)\|_{L^\infty(0,t;L^\infty_r H^s_x)},\kappa^{1/2} \| \partial_x  h\|_{L^2(0,t;L^\infty_r H^s_x)}\})\leq c  \| ( h^0,u^0)\|_{L^\infty_r H^s_x}.\]
	
	Moreover, the maximal existence time (resp. the emerging solution in $C([0,T^\star);L^\infty((-1,0);  H^s(\R)^2)$) is a lower semi-continuous (resp. continuous) function of the initial data in $L^\infty((-1,0);H^{s}(\R)^2$ and if $T^\star<\infty$ then
	\[ \| ( h(t,\cdot), u(t,\cdot))\|_{L^\infty_r H^{s_0}_x}\to \infty \text{ as } t\to T^\star.\]
	\begin{proof}
		The proof is very similar to the proof of Proposition~\ref{P.WP-small-time-SV2}, using estimates for transport and transport-diffusion equations pointwisely with respect to the variable $r\in(-1,0)$. The essential arguments are that $L^\infty((-1,0))$ is a Banach algebra and that differentiation with respect to the space variable $\partial_x$ as well as the Fourier multiplier ${\Lambda^s=(\Id-\partial_x^2)^{s/2}}$ commute with the operator $\tfrac1\barrho\M[\barrho]$, and that the linear operator $\tfrac1\barrho\M[\barrho]:L^\infty_r L^2_x\to L^\infty_r L^2_x$ is bounded for any $\barrho\in L^\infty((-1,0))$.
	\end{proof}

\end{Prop}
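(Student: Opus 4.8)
The plan is to mimic the proof of Proposition~\ref{P.WP-small-time-SV2}, viewing~\eqref{eq.hydro-CV}--\eqref{eq.def-mont-CV} as a transport-diffusion equation for $h$ coupled to a transport equation for $u$, with the coupling mediated by the bounded linear operator $\frac1\barrho\M[\barrho]$ acting only in the $r$-variable. First I would fix the reference state and set up the Picard iteration scheme
\begin{equation*}
\begin{cases}
\partial_t h^{n+1} + (\baru+u^n)\partial_x h^{n+1} - \kappa\partial_x^2 h^{n+1} = -(1+h^n)\partial_x u^n,\\[2pt]
\partial_t u^{n+1} + \big(\baru + u^n - \kappa\tfrac{\partial_x h^n}{1+h^n}\big)\partial_x u^{n+1} = -\tfrac1\barrho\partial_x \M[\barrho]h^n,
\end{cases}
\end{equation*}
started from $(h^0,u^0)$. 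For each fixed $r\in(-1,0)$ this is a decoupled pair of scalar transport/transport-diffusion equations in $(t,x)$, so the standard linear theory (see~\cite{BahouriCheminDanchin11}) applies pointwise in $r$; the key structural facts are that $L^\infty((-1,0))$ is a Banach algebra, that $\partial_x$ and $\Lambda^s$ commute with $\frac1\barrho\M[\barrho]$, and that $\frac1\barrho\M[\barrho]\colon L^\infty_r L^2_x\to L^\infty_r L^2_x$ is bounded with norm controlled by $\barM$ (which follows directly from the explicit formula~\eqref{eq.def-mont-CV} and $\|\barrho\|_{L^1_r},\|1/\barrho\|_{L^\infty_r}\le\barM$, or more simply here from $\|(\baru,\barrho,1/\barrho)\|_{L^\infty_r}\le\barM$ together with the finite measure of $(-1,0)$).

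Next I would establish the a priori estimate: applying $\Lambda^s$ and using the standard $H^s$ energy estimates for transport-diffusion equations (with commutator and product estimates à la~\cite[Appendix~B]{Lannes}) one obtains, pointwise in $r$ and then taking $\esssup_r$, a bound of the schematic form
\begin{multline*}
\max\big(\|(h^{n+1},u^{n+1})\|_{L^\infty(0,t;L^\infty_r H^s_x)},\ \kappa^{1/2}\|\partial_x h^{n+1}\|_{L^2(0,t;L^\infty_r H^s_x)}\big)\\
\le \Big(\|(h^0,u^0)\|_{L^\infty_r H^s_x} + \kappa^{-1/2}(1+\|h^n\|_{L^\infty_T L^\infty_r H^s_x})\|u^n\|_{L^2(0,t;L^\infty_r H^s_x)} + \barM\,t\,\|h^n\|_{L^\infty(0,t;L^\infty_r H^s_x)}\Big)\\
\times \exp\Big(c_0\|u^n\|_{L^1(0,t;L^\infty_r H^s_x)} + c_0\kappa\big\|\tfrac{\partial_x h^n}{1+h^n}\big\|_{L^1(0,t;L^\infty_r H^s_x)}\Big),
\end{multline*}
where the $\kappa^{-1/2}$ factor is the usual smoothing gain $\|\partial_x(\cdot)\|_{L^2_T}\lesssim \kappa^{-1/2}\|\cdot\|_{L^\infty_T}$ for the heat-type equation, and the lower bound $1+h^n\ge\varsigma/c$ (propagated by the maximum principle for the transport-diffusion equation, itself stable under the iteration on a short time) keeps $1/(1+h^n)$ under control. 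A standard continuity/bootstrap argument then shows that, choosing $T$ so that $\kappa T$ is small relative to the fixed constants $\varsigma,\barM,M_0,c$, the ball of radius $c\,\|(h^0,u^0)\|_{L^\infty_r H^s_x}$ in $L^\infty(0,\kappa T;L^\infty_r H^s_x)$ (with the companion control on $\kappa^{1/2}\partial_x h$) is stable under the map $(h^n,u^n)\mapsto(h^{n+1},u^{n+1})$, and the maximum principle gives $1+h\ge\varsigma/c$ on $[0,\kappa T]$.

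Convergence of the iterates is then obtained at the lower regularity level $L^\infty_r L^2_x$ (or $L^\infty_r H^{s_0-1}_x$) by estimating the differences $h^{n+1}-h^n$, $u^{n+1}-u^n$: the transport and transport-diffusion equations are continuous with respect to initial data and Lipschitz with respect to source terms, and the difference of sources is linear in $(h^n-h^{n-1},u^n-u^{n-1})$ with coefficients bounded by the uniform $H^s$ bounds just obtained, so on a possibly smaller time interval (still of size $\propto\kappa$) the map is a contraction. This yields a limit $(h,u)\in C([0,\kappa T];L^\infty_r H^{s}_x)$ solving~\eqref{eq.hydro-CV}--\eqref{eq.def-mont-CV}; uniqueness, the maximal solution, the blow-up criterion, lower semicontinuity of $T^\star$ and continuity of the flow map all follow by the now-standard arguments (difference estimates plus a Bona--Smith-type regularization to upgrade from the low-regularity to the $H^s$-level continuity). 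The main obstacle I anticipate is purely bookkeeping: keeping the $r$-dependence uniform throughout — i.e.\ checking that every constant produced by the commutator/product estimates depends only on $\|(h,u)\|_{L^\infty_r H^{s_0}_x}$ and $\essinf_{x,r}(1+h)$ and not on $r$ itself — which is exactly what the Banach-algebra structure of $L^\infty((-1,0))$ and the $r$-uniform boundedness of $\frac1\barrho\M[\barrho]$ guarantee; no genuinely new analytic difficulty arises beyond Proposition~\ref{P.WP-small-time-SV2}.
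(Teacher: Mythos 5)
Your proposal takes essentially the same route as the paper's own (sketched) proof: Picard iteration on the coupled transport/transport-diffusion system, standard linear estimates applied pointwise in $r$, and the three structural facts the authors single out ($L^\infty((-1,0))$ is a Banach algebra, $\partial_x$ and $\Lambda^s$ commute with $\tfrac1\barrho\M[\barrho]$, and this operator is bounded on $L^\infty_r L^2_x$). The only cosmetic correction is in your schematic a priori bound, where the pressure source $-\tfrac1\barrho\M[\barrho]\partial_x h^n$ in the $u$-equation should be estimated through $\|\partial_x h^n\|_{L^1(0,t;L^\infty_r H^s_x)}\le t^{1/2}\kappa^{-1/2}\big(\kappa^{1/2}\|\partial_x h^n\|_{L^2(0,t;L^\infty_r H^s_x)}\big)$ (exactly as the term $\|(\partial_x H_s^n,\partial_x H_b^n)\|_{L^1(0,t;H^s)}$ is handled in Proposition~\ref{P.WP-small-time-SV2}) rather than by $t\,\|h^n\|_{L^\infty(0,t;L^\infty_r H^s_x)}$, which would lose a derivative; the companion control $\kappa^{1/2}\|\partial_x h\|_{L^2_t L^\infty_r H^s_x}$ you carry on the left-hand side is precisely what makes this work under the constraint $t\le\kappa T$.
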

\begin{Prop}[Stability] \label{P.stability-hydro}
	Let $s>3/2$, $\varsigma\in(0,1)$ and $\barM,M_1,M_2>0$. There exists $C>0$ such that the following holds.

	Let $\kappa\in(0,1]$ and $T>0$ be such that 
	\[  C T\leq \kappa .\]
	Let $\barrho_\ell$, $\baru_\ell$ (for $\ell\in\{1,2\}$) be such that 
	\[ \| (\barrho_\ell,\tfrac1{\barrho_\ell})\|_{L^\infty_r }\leq \barM.\]
	Let	$(h_\ell,u_\ell)$ be solutions to~\eqref{eq.hydro-CV}-\eqref{eq.def-mont-CV} (with $\barrho=\barrho_\ell$ and $\baru=\baru_\ell$) defined on the interval $[0,T]$ and satisfying
	\[ \| (\partial_x h_1,\partial_x u_1)\|_{L^\infty_T L^\infty_r H^s_x}\leq M_1,\qquad  \| (h_2,u_2)\|_{L^\infty_T L^\infty_r H^{s}_x}+ \kappa^{1/2}\, \|\partial_x h_2\|_{L^\infty_r L^2_T  H^s_x}\leq M_2,\]
	and
	\[\essinf_{(t,x,r)\in [0,T]\times \R\times (-1,0)} 1+h_1(t,x,r)\geq \varsigma, \qquad\essinf_{(t,x,r)\in [0,T]\times \R\times (-1,0)} 1+h_2(t,x,r)\geq \varsigma. \]
	Then one has
	\begin{multline}\label{eq.stability-L1} \max(\{\| (h_1-h_2,u_1-u_2)\|_{L^\infty_T L^1_r H^s_x} , \kappa^{1/2} \| \partial_x(h_1-h_2)\|_{L^2_T L^1_r H^s_x}\})\\
		\leq 2 \| (h_1^0-h_2^0,u_1^0-u_2^0)\|_{L^1_r H^s_x} + C T \, \| (\barrho_1-\barrho_2,\baru_1-\baru_2)\|_{L^1_r}
	\end{multline}
	and for almost any $r\in (-1,0)$,
	\begin{multline}\label{eq.stability-pointwise}\max(\{\| (h_1-h_2,u_1-u_2)(\cdot,r)\|_{L^\infty_T H^s_x} ,\kappa^{1/2}\|  \partial_x (h_1-h_2)(\cdot,r)\|_{L^2_T H^s_x}  \}) \\
		\leq 2\big( \| (h_1^0-h_2^0,u_1^0-u_2^0)(r)\|_{ H^s_x}+\| (h_1^0-h_2^0,u_1^0-u_2^0)\|_{L^1_r H^s_x}\big)\\
		 + C T \, \big(|(\barrho_1-\barrho_2,\baru_1-\baru_2)(r)|+\| (\barrho_1-\barrho_2,\baru_1-\baru_2)\|_{L^1_r}\big)  .
	\end{multline}
\end{Prop}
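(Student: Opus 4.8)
The plan is to run an energy estimate on the difference $(\dot h,\dot u):=(h_1-h_2,u_1-u_2)$ in $L^\infty_r H^s_x$, treating the gap between the two copies of the system as a source term, and then integrate in $r$ to obtain the $L^1_r$-version \eqref{eq.stability-L1}, while the pointwise estimate \eqref{eq.stability-pointwise} follows from the same energy identity by keeping the $r$-dependence explicit rather than integrating. First I would subtract the two systems \eqref{eq.hydro-CV}-\eqref{eq.def-mont-CV}. Writing $v_\ell:=\baru_\ell+u_\ell-\kappa\frac{\partial_x h_\ell}{1+h_\ell}$ for the total velocity in layer $\ell$, the equation for $\dot h$ reads schematically
\[
\partial_t\dot h + \partial_x\big((1+h_1)(\baru_1+u_1)\big) - \partial_x\big((1+h_2)(\baru_2+u_2)\big) = \kappa\partial_x^2\dot h,
\]
which, after reorganizing, becomes a transport-diffusion equation for $\dot h$ with transport velocity $\baru_2+u_2$, a term $(1+h_2)\partial_x\dot u$, plus source terms proportional to $\dot h\,\partial_x u_1$, $(1+h_1)\partial_x(\baru_1-\baru_2)$ and $(\baru_1-\baru_2)\partial_x h_1$ — all of which are controlled by $M_1$, $\varsigma$ and the quantities being estimated, except the genuinely inhomogeneous piece $(1+h_1)\partial_x(\baru_1-\baru_2)$, which contributes $\|\baru_1-\baru_2\|$ to the right-hand side. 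Similarly the $\dot u$-equation is a transport equation with transport velocity $v_2$, plus $\frac1{\barrho_2}\partial_x\M[\barrho_2]\dot h$, plus source terms: $\dot u\,\partial_x u_1$, $(\baru_1-\baru_2)\partial_x u_1$, $(v_1-v_2)\partial_x u_1$ where the bolus-velocity difference $v_1-v_2$ itself splits into $\dot u$, $\baru_1-\baru_2$, and $\kappa$ times $\partial_x$ of a bounded function of $\dot h$ and $h_1-h_2$, and crucially the Montgomery difference $\big(\tfrac1{\barrho_1}\M[\barrho_1]-\tfrac1{\barrho_2}\M[\barrho_2]\big)\partial_x h_1$, which is exactly what Lemma~\ref{L.stability-M} is designed to bound, yielding a contribution $\big(\barM^3|\barrho_1-\barrho_2|(r)+\barM\|\barrho_1-\barrho_2\|_{L^1_r}\big)\|\partial_x h_1\|_{L^\infty_r H^s_x}$.

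Next I would apply $\Lambda^s$ to both equations and perform the standard $L^2_x$ energy estimate, using commutator estimates (Kato--Ponce, as in \cite[Appendix~B]{Lannes}) to handle $[\Lambda^s,\baru_2+u_2]$, $[\Lambda^s,v_2]$, $[\Lambda^s,1+h_2]$ acting on the principal-order terms; the cross-term between $(1+h_2)\partial_x\dot u$ in the $h$-equation and $\frac1{\barrho_2}\partial_x\M[\barrho_2]\dot h$ in the $u$-equation must be matched after integration by parts — this is the place where the specific algebraic structure of the Montgomery operator and the weight $1/\barrho_2$ must conspire so that the symmetrized bilinear form is sign-definite (or at worst produces only lower-order remainders), exactly analogous to how the symmetrizer works in Lemma~\ref{L.symmetrizer} for the bilayer case but now at the level of the continuously stratified operator. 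The diffusive term $\kappa\partial_x^2\dot h$ provides the positive contribution $\kappa\|\partial_x\dot h\|_{H^s_x}^2$ controlling the second entry on the left of \eqref{eq.stability-L1}-\eqref{eq.stability-pointwise}, and the degenerate nature of the bolus advection ($v_2$ containing $-\kappa\partial_x h_2/(1+h_2)$, whose $W^{1,\infty}_x$ norm is bounded using $\kappa^{1/2}\|\partial_x h_2\|_{L^\infty_r L^2_T H^s_x}\le M_2$ together with a $\kappa^{1/2}$ from Cauchy--Schwarz in $t$) keeps all $\kappa$-dependent commutator terms under control provided $CT\le\kappa$, which is precisely the hypothesis.

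Collecting everything gives a differential inequality of the form $\frac{\dd}{\dd t}\mathcal E + \kappa\|\partial_x\dot h\|_{H^s_x}^2 \le C\mathcal E + C\big(\|\barrho_1-\barrho_2\|_{L^1_r}+\|\baru_1-\baru_2\|_{L^1_r}\big)\mathcal E^{1/2}$ pointwise in $r$ (with an extra $|\barrho_1-\barrho_2|(r)+|\baru_1-\baru_2|(r)$ on the right for the pointwise version, coming from the first term in Lemma~\ref{L.stability-M} and the $r$-local source terms), where $\mathcal E\sim\|(\dot h,\dot u)(\cdot,r)\|_{H^s_x}^2$. Integrating over $r\in(-1,0)$ for the $L^1_r$ statement and applying Grönwall with $CT\le\kappa\le1$ (so $e^{CT}\le e$, whence the factor $2$) yields \eqref{eq.stability-L1}; keeping $r$ fixed, but feeding in the already-obtained $L^1_r$ bound to close the terms that involve $\|\,\cdot\,\|_{L^1_r}$ rather than the pointwise value, yields \eqref{eq.stability-pointwise}. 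The main obstacle I anticipate is the bookkeeping around the Montgomery operator: verifying that the symmetrization of the $(\dot h,\dot u)$-system produces a coercive energy functional uniformly in $\barrho$ (given only $\|(\barrho,1/\barrho)\|_{L^\infty_r}\le\barM$), and that the non-symmetric leftover — which in the bilayer case was handled by an explicit $\lambda$-shift — is genuinely lower order here because the only troublesome coupling is through $\partial_x$-free terms after one integration by parts; once that structural point is secured, the rest is the routine commutator-and-Grönwall machinery already used in the proofs of Lemma~\ref{L.estimate-linearized} and Proposition~\ref{P.uniform-WP}.
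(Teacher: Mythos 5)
Your setup---subtracting the two systems, isolating the Montgomery difference and bounding it with Lemma~\ref{L.stability-M}, then Gr\"onwall, integration in $r$ via Minkowski, and feeding the $L^1_r$ bound back into the pointwise estimate---matches the paper's skeleton. But the central mechanism you propose for the principal-order coupling is not the paper's, and it does not work under the stated hypotheses. You want to symmetrize the coupled $(\dot h,\dot u)$-system so that the cross-terms $(1+h_2)\partial_x\dot u$ (in the $h$-equation) and $\tfrac1{\barrho_2}\M[\barrho_2]\partial_x\dot h$ (in the $u$-equation) cancel after integration by parts against a coercive energy. There are two obstructions. First, $\M[\barrho_2]$ is nonlocal in $r$, so no pointwise-in-$r$ energy $\mathcal E\sim\|(\dot h,\dot u)(\cdot,r)\|_{H^s_x}^2$ can see this cancellation; any symmetrized energy must integrate over $r$, and its coercivity is precisely the (open) question of stability of the continuously stratified hydrostatic system that the paper's introduction flags as not understood. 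Second, the Proposition assumes only $\|(\barrho_\ell,1/\barrho_\ell)\|_{L^\infty_r}\le\barM$---no stable stratification, no smallness of shear, no $r$-regularity---and the Remark following the Proposition states explicitly that the symmetric-structure route of~\cite{BianchiniDuchene} requires extra regularity in $r$ that is unavailable here because the estimates must apply to piecewise-constant profiles. So the ``structural point'' you defer to the end is exactly the point that fails.

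What the paper actually does is cruder, and it is the reason the hypothesis $CT\le\kappa$ appears: it treats both coupling terms as pure source terms, accepts the loss of one derivative, and absorbs that loss with the parabolic smoothing of $\kappa\partial_x^2\dot h$. Concretely, $r_2=-(1+h_2)\partial_x\dot u$ enters the transport--diffusion estimate for $\dot h$ through $\kappa^{-1/2}\|r_2\|_{L^2_T H^{s-1}_x}\lesssim \kappa^{-1/2}T^{1/2}\|\dot u\|_{L^\infty_T H^s_x}$, while $\tfrac1{\barrho_2}\M[\barrho_2]\partial_x\dot h$ enters the transport estimate for $\dot u$ through $\barM^2\|\partial_x\dot h\|_{L^1_T L^1_r H^s_x}\lesssim \barM^2\,T^{1/2}\kappa^{-1/2}\cdot\kappa^{1/2}\|\partial_x\dot h\|_{L^2_T L^1_r H^s_x}$; both prefactors $\kappa^{-1/2}T^{1/2}$ are made small by $CT\le\kappa$, so these contributions are absorbed into the left-hand side. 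Your attribution of the hypothesis $CT\le\kappa$ to commutator bookkeeping for the bolus velocity is therefore a misdiagnosis. To repair the argument, drop the symmetrization entirely, run the two scalar a priori estimates for transport and transport--diffusion equations separately, close first the $L^1_r$-integrated inequality, and only then the pointwise one.
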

\begin{proof}
	Let us denote $\dot h:=h_1-h_2$, $\dot u:=u_1-u_2$, $\dot\baru:=\baru_1-\baru_2$ and $\dot{\barrho}=\barrho_1-\barrho_2$. We have on the time interval $I:=[0,T]$
	\[
	\begin{cases}
		\partial_t  \dot h + (\baru_2+u_2)\partial_x \dot h- \kappa \partial_x^2 \dot h   =r_1+r_2 ,\\
		\partial_t  \dot u + \left( \baru_2+u_2 +u_2^\star\right)\partial_x\dot u  =r_3, 
	\end{cases}
	\]
	where we denote $u_2^\star:=- \kappa \tfrac{\partial_x  h_2}{1+ h_2}$,
	\[ r_1:= -(\dot \baru+\dot u)\partial_x h_1 - \dot h\partial_x u_1, \quad  r_2:=-(1+h_2)\partial_x \dot u\]
	and
	\[  r_3:= -\left( \dot\baru+\dot u - \kappa \tfrac{\partial_x  \dot h}{1+ h_2}+ \kappa \dot h\tfrac{\partial_x  h_1}{(1+h_1)(1+ h_2)}\right)\partial_x u_1 -\left(\tfrac1{\barrho_1}\M[\barrho_1]-\tfrac1{\barrho_2}\M[\barrho_2]\right)\partial_x  h_1- \tfrac1{\barrho_2}\M[\barrho_2]\partial_x \dot h.\]
	We can now use standard estimates and transport-diffusion and transport equations (\cite{BahouriCheminDanchin11}) to infer that there exists $c_0$ depending only on $s$ such that for almost any $r\in(-1,0)$, one has 
	\begin{multline*} \max(\{\| \dot h(\cdot,r)\|_{L^\infty_T H^s_x},\kappa^{1/2}\|\partial_x \dot h(\cdot,r)\|_{L^2_T H^s_x}\})\leq \Big( \| \dot h(t=0,\cdot,r)\|_{H^s_x} + \| r_1(\cdot,r)\|_{L^1_T H^s_x}+ \kappa^{-1/2}\| r_2(\cdot,r)\|_{L^2_T H^{s-1}_x} \Big)\\
		\times \exp(c_0 \|\partial_x u_2(\cdot,r)\|_{L^1_T H^{s-1}} )
	\end{multline*}
	and
	\[ \| \dot u(\cdot,r)\|_{L^\infty_T H^s_x}\leq \Big( \| \dot u(t=0,\cdot,r)\|_{H^s_x} + \| r_3(\cdot,r)\|_{L^1_T H^s_x} \Big)
		\times \exp(c_0 \|\partial_x u_2(\cdot,r)+\partial_x u_2^\star(\cdot,r)\|_{L^1_T H^{s-1}_x} ).
	\]
	Using that $H^{s'}(\R)$ is a Banach algebra for all $s'>1/2$, we find that for any $t\in[0,T]$
	\[\| r_1(t,\cdot,r)\|_{H^s_x}\lesssim M_1\times \big(|\dot \baru(r)|+\| \dot u(t,\cdot,r)\|_{H^s_x}+\| \dot h(t,\cdot,r)\|_{H^s_x}\big), \quad \| r_2(t,\cdot,r)\|_{H^{s-1}_x}\lesssim (1+M_2)\| \dot u(t,\cdot,r)\|_{H^s_x},\]
	and, making additionally use of standard composition estimates in Sobolev spaces (\cite[Appendix~B]{Lannes}) and  Lemma~\ref{L.stability-M},
	\begin{multline*}\| r_3(t,\cdot,r)\|_{H^s_x}\leq C(s,\barM,M_1,M_2,\varsigma)\Big(|\dot \baru(r)|+\| \dot u(t,\cdot,r)\|_{H^s_x}+ \kappa\| \dot h(t,\cdot,r)\|_{H^{s+1}_x} +|\dot{\barrho}(r)| + \|\dot{\barrho}\|_{L^1_r} \Big)M_1\\
		+\barM^2 \| \partial_x\dot h(t,\cdot)\|_{L^1_r H^s_x}.
	\end{multline*}
	Finally, using product and composition estimates on $u_2^\star=- \kappa\partial_x  h_2+\kappa \tfrac{  h_2}{1+ h_2}\partial_x  h_2$, we have
	\[\|\partial_x u_2^\star(t,\cdot,r)\|_{H^{s-1}_x}\leq C(M_2,\varsigma^{-1}) \, \kappa\, \|\partial_x h_2(t,\cdot ,r)\|_{H^s_x}.\]
	Collecting these estimates we infer that there exists $C>0$ depending only on $s,\barM,M_1,M_2,\varsigma$ such that 
	\begin{multline*}\max(\{\| \dot h(\cdot,r),\dot u(\cdot,r)\|_{L^\infty_T H^s_x},\kappa^{1/2}\|  \partial_x \dot h(\cdot,r)\|_{L^2_T H^s_x}\} )\leq  \exp(C M_2(T+\kappa^{1/2}T^{1/2}))\\
		\times \Big(\| (\dot h(t=0,\cdot,r), \dot u(t=0,\cdot,r))\|_{H^s_x}
		+C\kappa^{-1/2}T^{1/2}\big( \| \dot u(\cdot,r)\|_{L^\infty_T H^s_x}+\kappa^{1/2}\| \partial_x\dot h\|_{L^2_T L^1_r H^s_x}\big) \\
		+CM_1T\times \big(|\dot \baru(r)|+|\dot{\barrho}(r)| + \|\dot{\barrho}\|_{L^1_r} +\| \dot h(\cdot,r)\|_{L^\infty_T H^s_x}+\| \dot u(\cdot,r)\|_{L^\infty_T H^s_x}+\kappa T^{-1/2}\|  \partial_x \dot h(\cdot,r)\|_{L^2_T H^s_x}\big)
		\Big).
	\end{multline*}
	Hence there exists $C>0$, depending only on $s,\barM,M_1,M_2,\varsigma$ such that for any $T$ sufficiently small so that one has $M_1T\leq (36C)^{-1}$, $(\kappa^{-1/2}+\kappa^{1/2}M_1)T^{1/2}\leq (18C)^{-1}$ and  $M_2T+M_2\kappa^{1/2}T^{1/2}\leq C^{-1}\ln(3/2)$ one has
	\begin{multline}\label{eq.stability}
		\frac56\max(\{\| (\dot h(\cdot,r), \dot u(\cdot,r))\|_{L^\infty_T H^s_x},\kappa^{1/2}\|  \partial_x \dot h(\cdot,r)\|_{L^2_T H^s_x} \})\\
		\leq \frac32 \Big(\| (\dot h(t=0,\cdot,r), \dot u(t=0,\cdot,r))\|_{H^s_x}
		+\tfrac1{18}\kappa^{1/2}\| \partial_x\dot h\|_{L^2_T L^1_r H^s_x}\big) 
		+ CM_1T\,\big(|\dot \baru(r)|+|\dot{\barrho}(r)| + \|\dot{\barrho}\|_{L^1_r}\big) 
		\Big).
	\end{multline}
	Integrating this inequality with respect to the variable $r$ and using Minkowski's inequality, we infer the first stability estimate,~\eqref{eq.stability-L1}. Plugging~\eqref{eq.stability-L1} in the right-hand side of~\eqref{eq.stability}, the second stability estimate,~\eqref{eq.stability-pointwise}, follows immediately.
\end{proof}

\begin{Rmk}The restriction $T\lesssim \kappa$ in Propositions~\ref{P.WP-small-time-hydro} and~\ref{P.stability-hydro} is quite stringent. In~\cite{BianchiniDuchene}, some improved stability estimates concerning the system~\eqref{eq.hydro-CV}-\eqref{eq.def-mont-CV} were derived by the authors. The latter estimates exploit a partial symmetric structure of the equations, and demand some extra regularity with respect to the variable $r$. Because we cannot afford such regularity since our stability estimates will be used with piecewise constant functions, we use in the proof of Proposition~\ref{P.stability-hydro} in a stronger way the parabolic regularization of thickness diffusivity. 
	
	In order to obtain a final result on a timescale which is independent of the parameter $\kappa\in(0,1]$ we shall exploit some a priori control on a reference solution and restrict to initial data as well as shear velocity and density distributions that are close to the reference data.
\end{Rmk}

\subsection{Refined approximation}\label{S.approx-hydro}

In this section we consider a given reference solution to the hydrostatic Euler equation~\eqref{eq.hydro-CV}-\eqref{eq.def-mont-CV} and build from it a refined approximate solution associated with nearby profiles. Specifically, let
$\barrho_{\rm ref},\baru_{\rm ref},h_{\rm ref},u_{\rm ref}$ be a solution to
\begin{equation} \label{eq.hydro-ref}
\begin{aligned}
\partial_t  h_{\rm ref} + \partial_x ((1+  h_{\rm ref}) (  \baru_{\rm ref} +  u_{\rm ref})) & = \kappa \partial_x^2  h_{\rm ref},\\
\partial_t  u_{\rm ref} + \left( \baru_{\rm ref}+ u_{\rm ref} - \kappa \frac{\partial_x  h_{\rm ref}}{1+ h_{\rm ref}}\right)\partial_x u_{\rm ref} + \frac1{\barrho_{\rm ref}}\M[\barrho_{\rm ref}]\partial_x  h_{\rm ref} &=0, \\
\end{aligned}
\end{equation}
where we recall that the operator $\M$ is defined in~\eqref{eq.def-mont-CV}. Considering profiles $(\barrho,\baru)$ which are in some sense close to $(\barrho_{\rm ref},\baru_{\rm ref})$ we construct approximate solutions $(h_{\rm app},u_{\rm app})$ as the solutions to
\begin{equation} \label{eq.hydro-app}
\begin{aligned}
\partial_t  h_{\rm app} + \partial_x ((1+  h_{\rm app}) (  \baru +  u_{\rm app})) & = \kappa \partial_x^2  h_{\rm app},\\
\partial_t  u_{\rm app} + \left( \baru+ u_{\rm app} - \kappa \frac{\partial_x  h_{\rm app}}{1+ h_{\rm app}}\right)\partial_x u_{\rm app} &= -\frac1{\barrho}\M[\barrho]\partial_x  h_{\rm ref}.
\end{aligned}
\end{equation}
Notice first that $(h_{\rm app},u_{\rm app})$ satisfies approximately the hydrostatic Euler equations associated with profiles $(\barrho,\baru)$.
\begin{Prop}\label{P.consistency-app}
	For any $s\geq 0$ the refined approximate solution $(h^{\rm app},u^{\rm app})$ satisfies
	\begin{equation} \label{eq.hydro-app-consistency}
	\begin{aligned}
	\partial_t  h_{\rm app} + \partial_x ((1+  h_{\rm app}) (  \baru +  u_{\rm app})) & = \kappa \partial_x^2  h_{\rm app},\\
	\partial_t  u_{\rm app} + \left( \baru+ u_{\rm app} - \kappa \frac{\partial_x  h_{\rm app}}{1+ h_{\rm app}}\right)\partial_x u_{\rm app}+\frac1{\barrho}\M[\barrho]\partial_x  h_{\rm app} &= r_{\rm rem}
	\end{aligned}
	\end{equation}
	with
	\[\| r_{\rm rem}(t,\cdot) \|_{L^\infty_r H^{s}_x} \leq \|\barrho\|_{L^\infty_r}\|\tfrac1{\barrho}\|_{L^\infty_r} \| (h_{\rm ref}-h_{\rm app})(t,\cdot)\|_{L^1_r H^{s+1}_x}
	\]
\end{Prop}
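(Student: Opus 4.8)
The plan is straightforward: the approximate system \eqref{eq.hydro-app} was designed precisely so that its only discrepancy with the hydrostatic Euler system \eqref{eq.hydro-app-consistency} associated with the profiles $(\barrho,\baru)$ lies in the Montgomery forcing of the velocity equation being evaluated at $h_{\rm ref}$ rather than at $h_{\rm app}$. First I would observe that the two equations governing $h_{\rm app}$ in \eqref{eq.hydro-app} and \eqref{eq.hydro-app-consistency} are literally identical, so there is nothing to check there. Then, subtracting the velocity equation of \eqref{eq.hydro-app} from that of \eqref{eq.hydro-app-consistency} and using that $(h_{\rm app},u_{\rm app})$ solves \eqref{eq.hydro-app}, the time-derivative and advective terms cancel and one reads off
\[ r_{\rm rem} = \frac1{\barrho}\M[\barrho]\partial_x h_{\rm app} - \Big(-\frac1{\barrho}\M[\barrho]\partial_x h_{\rm ref}\Big) = \frac1{\barrho}\M[\barrho]\,\partial_x\big(h_{\rm app}-h_{\rm ref}\big), \]
where I use that $h\mapsto\M[\barrho]h$ is linear and that $\partial_x$ commutes with $\M[\barrho]$, the latter acting only on the $r$-variable.

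It then remains to estimate $\tfrac1\barrho\M[\barrho]\partial_x(h_{\rm app}-h_{\rm ref})$ in $L^\infty_r H^s_x$. Recalling from \eqref{eq.def-mont-CV} that, for a function $g$ of the $r$-variable,
\[ \Big(\frac1\barrho\M[\barrho]g\Big)(r) = \int_{-1}^r g(r')\,\dd r' + \frac1{\barrho(r)}\int_r^0 \barrho(r')\,g(r')\,\dd r', \]
I would apply Minkowski's integral inequality in $H^s_x$ to each of the two integral terms, bound $\|\partial_x f\|_{H^s_x}\leq\|f\|_{H^{s+1}_x}$, and then factor $\max\big(1,\|\barrho\|_{L^\infty_r}/|\barrho(r)|\big)$ out of the sum of the two integrals, whose domains $(-1,r)$ and $(r,0)$ are complementary and therefore together reconstitute $\|h_{\rm app}-h_{\rm ref}\|_{L^1_r H^{s+1}_x}$. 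Finally I would invoke the elementary bound $\max\big(1,\|\barrho\|_{L^\infty_r}/|\barrho(r)|\big)\leq \|\barrho\|_{L^\infty_r}\|\tfrac1\barrho\|_{L^\infty_r}$ --- valid for a.e.\ $r$ since $|\barrho(r)|^{-1}\leq\|\tfrac1\barrho\|_{L^\infty_r}$ and $1\leq\|\barrho\|_{L^\infty_r}\|\tfrac1\barrho\|_{L^\infty_r}$ --- and take the essential supremum over $r\in(-1,0)$ of the resulting ($r$-independent) right-hand side.

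There is no genuine obstacle here: the statement merely records how the consistency defect of the refined approximate solution is controlled by the discrepancy $h_{\rm ref}-h_{\rm app}$, and the only mild care needed is in arranging constants so that the bound involves solely the factor $\|\barrho\|_{L^\infty_r}\|\tfrac1\barrho\|_{L^\infty_r}$. The boundedness of $\tfrac1\barrho\M[\barrho]$ on $L^\infty_r H^s_x$, together with the relevant commutations, was already used in the proof of Proposition~\ref{P.WP-small-time-hydro}.
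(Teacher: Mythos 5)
Your argument is exactly the paper's proof: identify $r_{\rm rem}=\tfrac1{\barrho}\M[\barrho]\partial_x(h_{\rm app}-h_{\rm ref})$, write out the two integrals defining $\tfrac1\barrho\M[\barrho]$, and bound the density ratio $\barrho(r')/\barrho(r)$ by $\|\barrho\|_{L^\infty_r}\|\tfrac1\barrho\|_{L^\infty_r}$ (together with $1\leq\|\barrho\|_{L^\infty_r}\|\tfrac1\barrho\|_{L^\infty_r}$), then integrate in $r'$ over the complementary domains. The only blemish is a sign slip in your intermediate display (the subtraction should read $\tfrac1{\barrho}\M[\barrho]\partial_x h_{\rm app}+\big({-}\tfrac1{\barrho}\M[\barrho]\partial_x h_{\rm ref}\big)$), but the resulting identity and the estimate are correct.
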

\begin{proof}
	We have
	\begin{align*}
		r_{\rm rem}(\cdot,r)&=\Big(\frac1{\barrho}\M[\barrho](\partial_x  h_{\rm app}-\partial_x  h_{\rm ref})\Big)(\cdot,r)\\
		&=  \int_{-1}^r (\partial_x  h_{\rm app}-\partial_x  h_{\rm ref}) (\cdot, r') \dd r' + \frac1{\barrho(r)}\int_r^0  \barrho (r')(\partial_x  h_{\rm app}-\partial_x  h_{\rm ref}) (\cdot, r') \dd r' ,
	\end{align*}
	and the result follows since $1\leq \sup(\{\barrho (r')/\barrho(r) \ : \ (r,r')\in(-1,0)^2\})\leq \|\barrho\|_{L^\infty_r}\|\tfrac1{\barrho}\|_{L^\infty_r}$.
\end{proof}

That the above remainder term $r_{\rm app}$ is small is a consequence of the subsequent Proposition~\ref{P.convergence-app-bl}. We first prove that for any initial data $(h_{\rm app},u_{\rm app})\vert_{t=0}=(h^0,u^0)$, the emerging solution $(h_{\rm app},u_{\rm app})$ is well-defined and controlled on a time interval uniform with respect to $\kappa\in(0,1]$. 

\begin{Prop}\label{P.control-app} Let $s>3/2$, $\varsigma\in(0,1)$, $\barM,M_0,M_{\rm ref}>0$ and $c>1$. There exists 
	$C>0$ and $T>0$ such that the following holds. 
	
	Let $h_{\rm ref}\in C([0,T_{\rm ref}); L^1((0,1); H^{s+1}_x(\R))) $ be such that
	\[\| \partial_x h_{\rm ref}\|_{L^1_{T_{\rm ref}} L^1_r H^s_x}\leq M_{\rm ref}.\]	
	For all  $\kappa\in(0,1]$,  all $(\barrho,\baru)\in L^\infty((-1,0))$ such that
	\[\| (\barrho,\tfrac1\barrho)\|_{L^\infty_r}\leq \barM, \]
	and all $(h^0,u^0)\in L^\infty((0,1);H^s(\R)^2)$ such that 
	for almost all $ r\in (-1,0)$ 
	\[  \forall x\in\R,\quad   1+h^0\geq \varsigma \]
	 and
	\[\max(\{\|  h^0\|_{L^\infty_r H^{s-1}_x},\kappa^{1/2} \|  h^0 \|_{L^\infty_r H^{s}_x} ,\|  u^0\|_{L^\infty_r H^{s}_x}\})\leq M_0,\]
	there exists a unique 	$(h_{\rm app},u_{\rm app})\in C([0,T^\star); L^\infty((0,1); H^{s}_x(\R)^2))$ maximal solution to~\eqref{eq.hydro-app} emerging from the initial data $(h_{\rm app},u_{\rm app})\vert_{t=0}=(h^0,u^0)$. Moreover one has $T^\star\geq T_{\rm app}:=\min(\{T_{\rm ref},T\})$ and for any $t\in [0,T_{\rm app}]$
	and almost any $r\in(-1,0)$ one has the upper bound
		\begin{multline*}\max(\{\|  h_{\rm app}(\cdot,r)\|_{L^\infty_{T_{\rm app}} H^{s-1}_x},\kappa^{1/2}  \|  h_{\rm app}(\cdot,r)\|_{L^\infty_{T_{\rm app}} H^{s}_x},\kappa\|\partial_x  h_{\rm app}(\cdot,r)\|_{L^2_{T_{\rm app}} H^{s}_x}\}) +\|u_{\rm app}(\cdot,r)\|_{L^\infty_{T_{\rm app}} H^{s}_x} \\
		\leq c 	\max(\{\|  h^0(\cdot,r)\|_{H^{s-1}_x},\kappa^{1/2} \|  h^0(t=0,\cdot,r)\|_{H^{s}_x} , \|  u^0(\cdot,r)\|_{H^{s}_x} \})+CM_{\rm ref}.
	\end{multline*}
\end{Prop}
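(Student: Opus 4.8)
The plan is to follow the proof of Proposition~\ref{P.WP-small-time-SV2}, regarding~\eqref{eq.hydro-app} as, for each fixed $r\in(-1,0)$, a closed system made of one scalar transport--diffusion equation for $h_{\rm app}(\cdot,r)$ and one scalar transport equation for $u_{\rm app}(\cdot,r)$, coupled only through order-zero and order-one terms, the decisive structural feature being that the Montgomery contribution $-\tfrac1{\barrho}\M[\barrho]\partial_x h_{\rm ref}$ in the velocity equation involves the \emph{given} function $h_{\rm ref}$, not the unknowns. Since $\partial_x$ and $\Lambda^s:=(\Id-\partial_x^2)^{s/2}$ commute with $\tfrac1{\barrho}\M[\barrho]$, and $\tfrac1{\barrho(r)}(\M[\barrho]g)(r)=\int_{-1}^r g(r')\dd r'+\tfrac1{\barrho(r)}\int_r^0\barrho(r')g(r')\dd r'$, this forcing is controlled uniformly in $\kappa$ and in $r$: $\|\tfrac1{\barrho}\M[\barrho]\partial_x h_{\rm ref}\|_{L^1_{T}L^\infty_r H^s_x}\leq(1+\barM^2)\|\partial_x h_{\rm ref}\|_{L^1_{T_{\rm ref}}L^1_r H^s_x}\leq(1+\barM^2)M_{\rm ref}$. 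Using that $L^\infty((-1,0))$ is a Banach algebra, all estimates are performed pointwise in $r$ and $L^\infty_r$ (resp.\ $L^1_r$) norms are taken afterwards. I would then build Picard iterates $(h^n,u^n)\mapsto(h^{n+1},u^{n+1})$ by freezing the coefficients and the lower-order terms at iterate $n$, and invoke the standard estimates for transport and transport--diffusion equations~\cite{BahouriCheminDanchin11} to show that, for $T$ small enough \emph{but independent of $\kappa\in(0,1]$}, the iterates remain in a ball for the norm
\[\mathcal N_T(h,u):=\max(\{\|h\|_{L^\infty_T L^\infty_r H^{s-1}_x},\ \kappa^{1/2}\|h\|_{L^\infty_T L^\infty_r H^{s}_x},\ \kappa\|\partial_x h\|_{L^2_T L^\infty_r H^{s}_x}\})+\|u\|_{L^\infty_T L^\infty_r H^{s}_x},\]
and that the scheme contracts in $C([0,T];L^\infty_r(H^{s-1}_x\times H^{s}_x))$, which produces the unique solution together with its control.

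The a priori estimate is performed pointwise in $r$, with constants depending only on $s,\varsigma,\barM,M_0,M_{\rm ref}$, and rests on three ingredients. (i) For $h_{\rm app}$ at the $H^{s-1}_x$-level, the transport--diffusion estimate controls $\|h_{\rm app}\|_{L^\infty_T H^{s-1}_x}+\kappa^{1/2}\|\partial_x h_{\rm app}\|_{L^2_T H^{s-1}_x}$ by $(\|h^0\|_{H^{s-1}_x}+T(1+\|h_{\rm app}\|_{L^\infty_T H^{s-1}_x})\|u_{\rm app}\|_{L^\infty_T H^s_x})\exp(CT\|u_{\rm app}\|_{L^\infty_T H^s_x})$, the source $-(1+h_{\rm app})\partial_x u_{\rm app}$ being estimated in $H^{s-1}_x$ via the Banach-algebra property ($s-1>1/2$). (ii) At the $H^s_x$-level the source costs one derivative of $u_{\rm app}$, but the one-derivative-short form of the transport--diffusion estimate absorbs it at the price of a factor $\kappa^{-1/2}$, which is exactly cancelled by the chosen $\kappa^{1/2}$ weight; one thereby controls $\kappa^{1/2}\|h_{\rm app}\|_{L^\infty_T H^s_x}+\kappa\|\partial_x h_{\rm app}\|_{L^2_T H^s_x}$ by $(\kappa^{1/2}\|h^0\|_{H^s_x}+T^{1/2}(1+\|h_{\rm app}\|_{L^\infty_T H^{s-1}_x})\|u_{\rm app}\|_{L^\infty_T H^s_x})\exp(CT\|u_{\rm app}\|_{L^\infty_T H^s_x})$, with no surviving negative power of $\kappa$. (iii) For $u_{\rm app}$ at the $H^s_x$-level, the transport estimate controls $\|u_{\rm app}\|_{L^\infty_T H^s_x}$ by $(\|u^0\|_{H^s_x}+(1+\barM^2)M_{\rm ref})\exp(C\int_0^T\|\partial_x w\|_{H^{s-1}_x\cap L^\infty_x})$, where $w=\baru+u_{\rm app}-\kappa\,\partial_x\log(1+h_{\rm app})$ and $\partial_x w=\partial_x u_{\rm app}-\kappa\partial_x^2\log(1+h_{\rm app})$; expanding $\kappa\partial_x^2\log(1+h_{\rm app})$ and using composition estimates together with $\kappa\partial_x^2 h_{\rm app}\in L^2_T H^{s-1}_x$ and $\kappa^{1/2}\partial_x h_{\rm app}\in L^2_T H^{s-1}_x$ (which follow from the depth bounds obtained in (i) and (ii)) and the lower bound $\inf(1+h_{\rm app})\geq\varsigma/c$, one sees that $\int_0^T\|\partial_x w\|_{H^{s-1}_x\cap L^\infty_x}$ is at most $T^{1/2}$ times a fixed polynomial in $\mathcal N_T$, uniformly in $\kappa$. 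A continuity (bootstrap) argument, maintaining simultaneously $\mathcal N_T\leq c\,\mathcal N_0+CM_{\rm ref}$ and the lower bound $1+h_{\rm app}\geq\varsigma/c$ --- the latter from $\|h_{\rm app}(t,\cdot,r)-h^0(\cdot,r)\|_{L^\infty_x}\leq\int_0^t\|\partial_t h_{\rm app}(t',\cdot,r)\|_{L^\infty_x}\dd t'=O(T)$, the diffusive term $\kappa\partial_x^2 h_{\rm app}$ being controlled by $\kappa\|\partial_x h_{\rm app}\|_{L^2_T H^s_x}$ and Cauchy--Schwarz in time --- then closes the estimate with $T$ depending only on $s,\varsigma,\barM,M_0,M_{\rm ref},c$ (and not on $\kappa$); one sets $T_{\rm app}:=\min(\{T_{\rm ref},T\})$. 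Uniqueness and continuity of the flow map follow by running the same scheme on the difference of two solutions, as in Proposition~\ref{P.WP-small-time-SV2}.

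The main obstacle is the uniformity of $T$ with respect to $\kappa\in(0,1]$. In Propositions~\ref{P.WP-small-time-SV2} and~\ref{P.WP-small-time-hydro}, the Montgomery-type source of the velocity equation involves $\partial_x$ of the unknown depth and is only available in $L^1_T$ after paying a factor $\kappa^{-1/2}T^{1/2}$, which forces $T\lesssim\kappa$. Here that source has been replaced by the \emph{given} forcing $\tfrac1{\barrho}\M[\barrho]\partial_x h_{\rm ref}$, bounded in $L^1_T L^\infty_r H^s_x$ uniformly in $\kappa$, so the only $\kappa$-weighted quantity entering the velocity estimate is the bolus coefficient $\kappa\tfrac{\partial_x h_{\rm app}}{1+h_{\rm app}}$. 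The delicate point --- and the place where one must argue carefully --- is to verify that the parabolic smoothing of the depth equation, which because we only request $\kappa^{1/2}$-weighted $H^s_x$ control of $h_{\rm app}$ comes out precisely as $\kappa\partial_x h_{\rm app}\in L^2_T H^s_x$, is exactly enough to control this coefficient and its $x$-derivative --- through the composition estimates for $\log(1+h_{\rm app})$ and $\tfrac1{1+h_{\rm app}}$ --- in $L^1_T$ with only a harmless $T^{1/2}$ gain, so that no negative power of $\kappa$ survives anywhere in the coupled estimate.
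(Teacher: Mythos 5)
Your proposal is correct and follows essentially the same route as the paper's proof: pointwise-in-$r$ transport and transport--diffusion estimates, with the key structural points being that the Montgomery forcing $\tfrac1{\barrho}\M[\barrho]\partial_x h_{\rm ref}$ is a \emph{given} source bounded by $\barM^2 M_{\rm ref}$ in $L^1_T H^s_x$ uniformly in $\kappa$, that the $\kappa^{-1/2}$ loss in the $H^s_x$ estimate for $h_{\rm app}$ is absorbed by the $\kappa^{1/2}$ weight, and that the bolus coefficient is controlled via $\|\partial_x u_{\rm app}^\star\|_{H^{s-1}_x}\lesssim\kappa\|\partial_x h_{\rm app}\|_{H^s_x}$ together with a $T^{1/2}$ gain from Cauchy--Schwarz in time, closed by a continuity argument with $T$ independent of $\kappa$. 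Your explicit propagation of the lower bound $1+h_{\rm app}\geq\varsigma/c$ is a point the paper leaves implicit, but it is the same argument.
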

\begin{proof}
	The existence and uniqueness of $(h_{\rm app},u_{\rm app})\in C([0,T^\star); L^\infty((0,1); H^{s}_x(\R)^2))$ maximal solution to~\eqref{eq.hydro-app} is obtained as in Proposition~\ref{P.WP-small-time-hydro}. We set $T\in [0,T^\star)$. 
	By standard estimates on transport and transport-diffusion equations (\cite{BahouriCheminDanchin11}) applied to~\eqref{eq.hydro-app} we have
		\begin{multline*} \max(\{\|  h_{\rm app}(\cdot,r)\|_{L^\infty_T H^{s-1}_x},\kappa^{1/2}\|\partial_x  h_{\rm app}(\cdot,r)\|_{L^2_T H^{s-1}_x}\})\leq \Big( \|  h^0(\cdot,r)\|_{H^{s-1}_x} + \| r_{\rm app}(\cdot,r)\|_{L^1_T H^{s-1}_x} \Big)\\
		\times \exp(c_0 \|\partial_x u_{\rm app}(\cdot,r)\|_{L^1_T H^{s-1})} ),
	\end{multline*}	
	\begin{multline*} \max(\{\|  h_{\rm app}(\cdot,r)\|_{L^\infty_T H^{s}_x},\kappa^{1/2}\|\partial_x  h_{\rm app}(\cdot,r)\|_{L^2_T H^{s}_x}\})\leq \Big( \|  h^0(\cdot,r)\|_{H^{s}_x}  + \kappa^{-1/2}\| r_{\rm app}(\cdot,r)\|_{L^2_T H^{{s}-1}_x} \Big)\\
		\times \exp(c_0 \|\partial_x u_{\rm app}(\cdot,r)\|_{L^1_T H^{s-1})} ),
	\end{multline*}
	and
\[
		\|  u_{\rm app}(\cdot,r)\|_{L^\infty_T H^{s}_x}\leq \Big( \|  u^0(\cdot,r)\|_{H^{s}_x} + \| r_{\rm ref}(\cdot,r)\|_{L^1_T H^{s}_x} \Big)
		\times \exp(c_0 \|\partial_x u_{\rm app}(\cdot,r)+\partial_x u_{\rm app}^\star(\cdot,r)\|_{L^1_T H^{s-1}_x} ),
\]
	where we denote $u_{\rm app}^\star:=- \kappa \tfrac{\partial_x  h_{\rm app}}{1+ h_{\rm app}}$, $r_{\rm app}:=-(1+h_{\rm app})\partial_x u_{\rm app}$, 
	$r_{\rm ref}=-\frac1{\barrho}\M[\barrho]\partial_x  h_{\rm ref}$, and the constant $c_0$ depends only on $s$.

	Now we notice that for almost any $r\in(-1,0)$,		
	\begin{align*}
		\|\partial_x u_{\rm app}^\star(t,\cdot,r)\|_{H^{s-1}_x}&\leq C(\| h_{\rm app}(t,\cdot ,r)\|_{H^{s-1}_x},\varsigma^{-1}) \, \kappa\, \|\partial_x h_{\rm app}(t,\cdot ,r)\|_{H^s_x},\\
		\| r_{\rm app}(t,\cdot,r)\|_{ H^{s-1}_x} &\leq C(\| h_{\rm app}(t,\cdot ,r)\|_{H^{s-1}_x}) \| u_{\rm app}(t,\cdot ,r)\|_{H^s_x},\\
		\| r_{\rm ref}(t,\cdot,r)\|_{ H^{s}_x} &\leq C(\|\barrho\|_{L^\infty_r}\|\tfrac1{\barrho}\|_{L^\infty_r}) \| \partial_x h_{\rm ref}(t,\cdot)\|_{L^1_r H^s_x}.
	\end{align*}
	From this we infer that there exists $C$, depending only on $s,\| h_{\rm app}(\cdot ,r)\|_{L^\infty_TH^{s-1}_x},\varsigma^{-1},\|\barrho\|_{L^\infty_r}\|\tfrac1{\barrho}\|_{L^\infty_r}$ such that
	\begin{multline*}\max(\{\|  h_{\rm app}(\cdot,r)\|_{L^\infty_T H^{s-1}_x},\kappa^{1/2}  \|  h_{\rm app}(\cdot,r)\|_{L^\infty_T H^{s}_x},\kappa\|\partial_x  h_{\rm app}(\cdot,r)\|_{L^2_T H^{s}_x} ,\|u_{\rm app}(\cdot,r)\|_{L^\infty_T H^{s}_x} \}) \\
		\leq	\Big(\max(\{\|  h^0(\cdot,r)\|_{H^{s-1}_x},\kappa^{1/2} \|  h^0(t=0,\cdot,r)\|_{H^{s}_x},\|  u^0(\cdot,r)\|_{H^{s}_x}\}) \\
		+ C(T+ T^{1/2})\| u_{\rm app}(\cdot ,r)\|_{L^\infty_T H^s_x}+C \| \partial_x h_{\rm ref}\|_{L^1_T L^1_r H^s_x} \Big)\\
		\times \exp(C T \| u_{\rm app}(\cdot ,r)\|_{L^\infty_T H^s_x} +C  T^{1/2} \kappa\|\partial_x  h_{\rm app}(\cdot,r)\|_{L^2_T H^{s}_x} ).
	\end{multline*}
	By the standard continuity argument, we find that
	\begin{multline*}
		\max(\{\|  h_{\rm app}(\cdot,r)\|_{L^\infty_{T_{\rm app}} H^{s-1}_x},\kappa^{1/2}  \|  h_{\rm app}(\cdot,r)\|_{L^\infty_{T_{\rm app}} H^{s}_x},\kappa\|\partial_x  h_{\rm app}(\cdot,r)\|_{L^2_{T_{\rm app}} H^{s}_x} ,\|u_{\rm app}(\cdot,r)\|_{L^\infty_{T_{\rm app}} H^{s}_x} \})\\
		\leq c 	\, \max(\{\|  h^0(\cdot,r)\|_{H^{s-1}_x},\kappa^{1/2} \|  h^0(t=0,\cdot,r)\|_{H^{s}_x}, \|  u^0(\cdot,r)\|_{H^{s}_x}\}) +CM_{\rm ref}
	\end{multline*}for almost all $r\in (0,1)$ and for all $T_{\rm app}\in [0,T]$ such that 
	\[(T_{\rm app}+ T_{\rm app}^{1/2})M_0 \leq  C^{-1} \]
	where $C$ depends only on $s,\varsigma,\barM,cM_0$. This concludes the proof.
\end{proof}
We conclude this section by investigating the difference between the reference solution and the refined approximate solution.
\begin{Prop}\label{P.convergence-app-bl}
	Let $s>3/2$, $\varsigma\in(0,1)$ and $\barM,M_{\rm ref},M_{\rm app}>0$. There exists $C>0$ such that the following holds.

	Let $\kappa\in(0,1]$, $T>0$  and let $\barrho_{\rm ref}$, $\baru_{\rm ref},\barrho,\baru\in L^\infty_r$ be such that 
	\[ \| (\barrho_{\rm ref},\tfrac1{\barrho_{\rm ref}},\barrho,\tfrac1\barrho)\|_{L^\infty_r }\leq \barM.\]
	Let	$(h_{\rm ref},u_{\rm ref})\in C([0,T];L^\infty((-1,0);  H^{s+1}(\R)^2)$ be a solution to~\eqref{eq.hydro-ref} (that is~\eqref{eq.hydro-CV}-\eqref{eq.def-mont-CV}  with $(\barrho,\baru)=(\barrho_{\rm ref},\baru_{\rm ref})$) defined on the time interval $[0,T]$ and satisfying
	\[ \| ( h_{\rm ref}, u_{\rm ref})\|_{L^\infty_T L^\infty_r H^{s+1}_x}\leq M_{\rm ref}.\]
	Let $(h_{\rm app},u_{\rm app})\in C([0,T];L^\infty((-1,0);  H^{s+1}(\R)^2)$ be solution to~\eqref{eq.hydro-app} defined on the time interval $[0,T]$ and satisfying
	\[ \| (h_{\rm app},u_{\rm app})\|_{L^\infty_T L^\infty_r H^{s}_x}+ \kappa^{1/2}\, \|\partial_x h_{\rm app}\|_{L^\infty_r L^2_T  H^s_x}\leq M_{\rm app},\]
	and such that for all $t\in [0,T]$ and almost all $r\in (-1,0)$
	\[\inf_{x\in \R} 1+h_{\rm ref}(t,x,r)\geq \varsigma, \qquad\inf_{x\in\R} 1+h_{\rm app}(t,x,r)\geq \varsigma. \]
	Then one has
	\begin{multline}\label{eq.stability-pointwise-app}\| (h_{\rm ref}-h_{\rm app},u_{\rm ref}-u_{\rm app})(\cdot,r)\|_{L^\infty_T H^s_x}   
		\leq \Big( \| (h_{\rm ref}-h_{\rm app},u_{\rm ref}-u_{\rm app})(t=0,\cdot,r)\|_{ H^s_x}\\ + C \kappa \, \big(|(\barrho_{\rm ref}-\barrho,\baru_{\rm ref}-\baru)(r)|+\| (\barrho_{\rm ref}-\barrho)\|_{L^1_r}\big) \Big) \exp(CT/\kappa).
	\end{multline}	
\end{Prop}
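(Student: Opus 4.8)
The plan is to argue pointwise with respect to $r\in(-1,0)$, along the lines of the proof of Proposition~\ref{P.stability-hydro}, combining the standard estimates for transport and transport--diffusion equations (\cite{BahouriCheminDanchin11}) with the parabolic regularization provided by the thickness diffusivity. Write $\dot h:=h_{\rm ref}-h_{\rm app}$, $\dot u:=u_{\rm ref}-u_{\rm app}$, $\dot\baru:=\baru_{\rm ref}-\baru$ and $\dot\barrho:=\barrho_{\rm ref}-\barrho$. Subtracting the equations~\eqref{eq.hydro-ref} and~\eqref{eq.hydro-app} (after bringing the Montgomery term in~\eqref{eq.hydro-app} to the left-hand side) and using $\partial_x\baru_{\rm ref}=\partial_x\baru=0$, one obtains the coupled system
\begin{align*}
\partial_t\dot h+(\baru+u_{\rm app})\partial_x\dot h-\kappa\partial_x^2\dot h&=-(1+h_{\rm app})\partial_x\dot u-(\dot\baru+\dot u)\partial_x h_{\rm ref}-\dot h\,\partial_x u_{\rm ref},\\
\partial_t\dot u+\big(\baru+u_{\rm app}-\kappa\tfrac{\partial_x h_{\rm app}}{1+h_{\rm app}}\big)\partial_x\dot u&=-(\dot\baru+\dot u)\partial_x u_{\rm ref}+\kappa\big(\tfrac{\partial_x h_{\rm ref}}{1+h_{\rm ref}}-\tfrac{\partial_x h_{\rm app}}{1+h_{\rm app}}\big)\partial_x u_{\rm ref}-\big(\tfrac1{\barrho_{\rm ref}}\M[\barrho_{\rm ref}]-\tfrac1{\barrho}\M[\barrho]\big)\partial_x h_{\rm ref}.
\end{align*}
The structural point, inherited from the construction~\eqref{eq.hydro-app}, is that the Montgomery term acts on the \emph{same} $h_{\rm ref}$ in both equations, so that its contribution to the $\dot u$-equation is the inert forcing $\big(\tfrac1{\barrho_{\rm ref}}\M[\barrho_{\rm ref}]-\tfrac1{\barrho}\M[\barrho]\big)\partial_x h_{\rm ref}$; since $\M[\cdot]$, $\tfrac1{\barrho}$, $\partial_x$ and $\Lambda^s$ commute, this is bounded in $H^s_x$ for a.e.\ $r$, by Lemma~\ref{L.stability-M}, by $C\big(|\dot\barrho(r)|+\|\dot\barrho\|_{L^1_r}\big)\|\partial_x h_{\rm ref}\|_{L^\infty_r H^s_x}\le CM_{\rm ref}\big(|\dot\barrho(r)|+\|\dot\barrho\|_{L^1_r}\big)$. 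In particular this term neither couples back to $\dot h$ nor couples the estimates across distinct values of $r$ --- which is exactly why the refined solution is more amenable to comparison with nearby exact solutions than the reference solution itself.

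Next I would run $H^s_x$ energy estimates on this system for a.e.\ fixed $r$. After the rearrangement $\tfrac{\partial_x h_{\rm ref}}{1+h_{\rm ref}}-\tfrac{\partial_x h_{\rm app}}{1+h_{\rm app}}=\tfrac{\partial_x\dot h}{1+h_{\rm app}}-\partial_x h_{\rm ref}\,\tfrac{\dot h}{(1+h_{\rm ref})(1+h_{\rm app})}$, every source term is either (a) proportional to $\dot\baru(r)$ or to $\dot\barrho$, i.e.\ a genuine forcing of size $\lesssim M_{\rm ref}\big(|\dot\baru(r)|+|\dot\barrho(r)|+\|\dot\barrho\|_{L^1_r}\big)$ carrying \emph{no} negative power of $\kappa$, or (b) linear in $(\dot h,\dot u,\kappa\partial_x\dot h)$ with coefficients controlled in terms of $s,\varsigma,\barM,M_{\rm ref},M_{\rm app}$. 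Here the $H^{s+1}_x$-control of $(h_{\rm ref},u_{\rm ref})$ is used to place products such as $\dot u\,\partial_x h_{\rm ref}$, $\dot h\,\partial_x u_{\rm ref}$, $\partial_x\dot h\,\partial_x u_{\rm ref}$ and $\partial_x h_{\rm ref}\,\dot h\,\partial_x u_{\rm ref}$ in $H^s_x$, while the qualitative regularity $h_{\rm app}\in H^{s+1}_x$ together with the weighted bound $\kappa^{1/2}\|\partial_x h_{\rm app}\|_{L^\infty_r L^2_T H^s_x}\le M_{\rm app}$ (which gives $\|\kappa\partial_x^2 h_{\rm app}\|_{L^1_T H^{s-1}_x}\lesssim T^{1/2}\kappa^{1/2}M_{\rm app}$) yields a bound on $\|\partial_x(\baru+u_{\rm app}-\kappa\tfrac{\partial_x h_{\rm app}}{1+h_{\rm app}})\|_{L^1_T H^{s-1}_x}$ that is uniform in $\kappa\in(0,1]$. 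The dangerous contributions $(1+h_{\rm app})\partial_x\dot u$ (in the $\dot h$-equation) and $\kappa\tfrac{\partial_x\dot h}{1+h_{\rm app}}\partial_x u_{\rm ref}$ (in the $\dot u$-equation) are absorbed by the parabolic dissipation $\kappa\|\partial_x\dot h\|_{H^s_x}^2$ via Young's inequality, the first at the cost of a term $\tfrac{C}{\kappa}\|\dot u\|_{H^s_x}^2$. Collecting everything, with $E(t):=\|(\dot h,\dot u)(t,\cdot,r)\|_{H^s_x}^2$ and $\mathcal S:=M_{\rm ref}\big(|\dot\baru(r)|+|\dot\barrho(r)|+\|\dot\barrho\|_{L^1_r}\big)$, one reaches a differential inequality of the shape $\tfrac{\dd}{\dd t}E+\tfrac12\kappa\|\partial_x\dot h\|_{H^s_x}^2\le\big(\tfrac{C}{\kappa}+g(t)\big)E+C\,\mathcal S\,E^{1/2}$, with $\int_0^T g\le C$ uniformly in $\kappa\in(0,1]$.

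It then remains to apply Gronwall's lemma. From $\tfrac{\dd}{\dd t}E^{1/2}\le\tfrac12\big(\tfrac{C}{\kappa}+g\big)E^{1/2}+\tfrac{C}2\mathcal S$ and the elementary bound $\int_0^t e^{C(t-s)/\kappa}\dd s\le(\kappa/C)e^{CT/\kappa}$ one gets $E(t)^{1/2}\lesssim\big(E(0)^{1/2}+\kappa\,\mathcal S\big)e^{CT/\kappa}$, which after absorbing $M_{\rm ref},\barM$ into the constant is exactly~\eqref{eq.stability-pointwise-app}; equivalently one may iterate over $\lceil T/(c\kappa)\rceil$ subintervals of length $\sim\kappa$ the basic estimate obtained under the restriction of Proposition~\ref{P.stability-hydro}, which reproduces the same factor $\exp(CT/\kappa)$ and the same gain of $\kappa$ on the forcing. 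I expect the main obstacle to be the bookkeeping that distinguishes which source terms are permitted to carry a factor $\kappa^{-1}$ --- only those coupling $\dot h$ and $\dot u$ to themselves, which are harmlessly fed into the Gronwall rate and produce the $\exp(CT/\kappa)$ --- from the genuine forcing proportional to $\dot\baru$ and $\dot\barrho$, which must be kept free of any negative power of $\kappa$ so that it comes out multiplied by $\kappa$; this relies essentially on the Gent--McWilliams/BD structure, which keeps the transport velocities $\kappa$-uniformly bounded, and on the refined-approximation structure~\eqref{eq.hydro-app}, which turns the nonlocal Montgomery operator into an inert forcing rather than a coupling term.
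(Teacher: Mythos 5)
Your proposal is correct and follows essentially the same route as the paper: the same difference system for $(\dot h,\dot u)$, the same key structural observation that the refined construction~\eqref{eq.hydro-app} removes the coupling term $-\tfrac1{\barrho}\M[\barrho]\partial_x\dot h$ so that the Montgomery contribution is an inert forcing controlled pointwise in $r$ by Lemma~\ref{L.stability-M}, and the same mechanism (Gronwall with rate $C/\kappa$, equivalently iteration of the short-time estimate of Proposition~\ref{P.stability-hydro} over $O(T/\kappa)$ subintervals of length $\sim\kappa/C$) producing the factor $\exp(CT/\kappa)$ together with the gain of $\kappa$ on the forcing.
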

\begin{proof}		
	Let us denote $\dot h:=h_{\rm ref}-h_{\rm app}$, $\dot u:=u_{\rm ref}-u_{\rm app}$, $\dot\baru:=\baru_{\rm ref}-\baru$ and $\dot{\barrho}=\barrho_{\rm ref}-\barrho$. We have on the time interval $I:=[0,T]$ for which both function are well-defined
	\[
	\begin{cases}
		\partial_t  \dot h + (\baru+u_{\rm app})\partial_x \dot h- \kappa \partial_x^2 \dot h   =r_1+r_2 ,\\
		\partial_t  \dot u + \left( \baru+u_{\rm app} +u_{\rm app}^\star\right)\partial_x\dot u  =r_3, 
	\end{cases}
	\]
	where $u_{\rm app}^\star:=- \kappa \tfrac{\partial_x  h_{\rm app}}{1+ h_{\rm app}}$,
	\[ r_1:= -(\dot \baru+\dot u)\partial_x h_{\rm ref} - \dot h\partial_x u_{\rm ref}, \quad  r_2:=-(1+h_{\rm app})\partial_x \dot u\]
	and
	\[  r_3:= -\left( \dot\baru+\dot u - \kappa \tfrac{\partial_x  \dot h}{1+ h_{\rm app}}+ \kappa \dot h\tfrac{\partial_x  h_{\rm ref}}{(1+h_{\rm ref})(1+ h_{\rm app})}\right)\partial_x u_{\rm ref} -\left(\tfrac1{\barrho_{\rm ref}}\M[\barrho_{\rm ref}]-\tfrac1{\barrho}\M[\barrho]\right)\partial_x  h_{\rm ref}.\]
	We can proceed as in the proof of Proposition~\ref{P.stability-hydro} with some straightforward adjustments as for the contributions of $r_3$ since the contribution $- \tfrac1{\barrho}\M[\barrho]\partial_x \dot h$ is nonexistent.
	
	We infer that there exists $C$ depending only on $s,\varsigma,\barM,M_{\rm app},M_{\rm ref}$ such that for all $\kappa\in(0,1]$ and $T_0\in(0,T]$ such that $  C T_0\leq \kappa $,
	 one has for almost any $r\in (-1,0)$,
	\begin{equation*}
	\max(\{	\| (\dot h,\dot u)(\cdot,r)\|_{L^\infty_{T_0} H^s_x} ,\kappa^{1/2}\|  \partial_x \dot h(\cdot,r)\|_{L^2_{T_0} H^s_x}  \}) 
		\leq 2 \| (\dot h,\dot u)(t=0,\cdot,r)\|_{ H^s_x} 
		+ C^2 T_0 \, \big(|\dot\barrho(r)|+|\dot\baru(r)|+\| \dot\barrho\|_{L^1_r}\big)  .
	\end{equation*}
	Iterating this control on $T_n:=\min(\{n\kappa/C,T\})$, we find that
		\[\| (\dot h,\dot u)(\cdot,r)\|_{L^\infty_{T_n} H^s_x} 
	\leq 2^{n+1}  \| (\dot h,\dot u)(t=0,\cdot,r)\|_{ H^s_x}  + C 2^{n} \kappa  \, \big(|\dot\barrho(r)|+|\dot\baru(r)|+\| \dot\barrho\|_{L^1_r}\big) ,
	\]
	which yields the claimed estimate.	
\end{proof}

\subsection{Convergence}\label{S.convergence-hydro}

We now conclude our analysis with the following stability result for solutions to the hydrostatic Euler equations.
\begin{Prop}[Convergence] 		\label{P.convergence-hydro}
	Let $s>3/2$, $\barM,M_{\rm ref},M_0>0$, and $\kappa\in(0,1]$. Then there exists $C,T$ independent of $\kappa$ and $\delta_0>0$ (depending on $\kappa$)  such that the following holds.
	
	Let $(\barrho_{\rm ref},\baru_{\rm ref})\in L^\infty((-1,0))^2$ and 
	$(h_{\rm ref},u_{\rm ref})\in C([0,T];L^\infty((-1,0);  H^{s+2}(\R)^2)$ be a solution to~\eqref{eq.hydro-ref} (that is~\eqref{eq.hydro-CV}-\eqref{eq.def-mont-CV}  with $(\barrho,\baru)=(\barrho_{\rm ref},\baru_{\rm ref})$) defined on the time interval $[0,T]$ 
	such that
	\[ \| (\barrho_{\rm ref},\tfrac1{\barrho_{\rm ref}})\|_{L^\infty_r }\leq \barM , \quad \| ( h_{\rm ref}, u_{\rm ref})\|_{L^\infty_T L^\infty_r H^{s+2}_x}+\| \partial_x  h_{\rm ref}\|_{L^1_T L^1_r H^{s+2}_x} \leq M_{\rm ref}.\]
	Let $(\barrho,\baru)\in L^\infty((-1,0))^2$ and $(h^0,u^0)\in L^\infty((-1,0); H^{s+2}(\R)^2)$ be such that 
	\[ \| (\barrho,\tfrac1{\barrho})\|_{L^\infty_r }\leq \barM, \quad  \|  (h^0,\kappa^{1/2}\partial_x h^0)\|_{L^\infty_r H^{s+1}_x}+\| u^0\|_{L^\infty_r H^{s+2}_x}\leq M_0\]
	and
	\[  \| (\barrho_{\rm ref}-\barrho,\baru_{\rm ref}-\baru)\|_{L^1_r}+\| (h_{\rm ref}\vert_{t=0}-h^0,u_{\rm ref}\vert_{t=0}-u^0)\|_{L^1_r H^{s+1}} \leq \delta_0.\]

	Then $(h,u)\in C([0,T^\star);L^\infty((-1,0);  H^{s+2}(\R)^2)$ the maximal-in-time solution to ~\eqref{eq.hydro-CV}-\eqref{eq.def-mont-CV} emerging from initial data $(h,u)\vert_{t=0}= (h^0, u^0)$ is defined on the time interval $[0,T]$ and we have for almost all $r\in (-1,0)$,
	\begin{multline}\label{eq.est-convergence} \| (h_{\rm ref}-h,u_{\rm ref}-u)(\cdot,r)\|_{L^\infty_T  H^{s}_x}  
		\leq \Big(\| (h_{\rm ref}-h_,u_{\rm ref}-u)(t=0,\cdot,r)\|_{ H^{s}_x}+ \| (h_{\rm ref}-h,u_{\rm ref}-u)(t=0,\cdot)\|_{ L^1_r H^{s+1}_x}\\
		 +|(\barrho_{\rm ref}-\barrho,\baru_{\rm ref}-\baru)(r)|+\|(\barrho_{\rm ref}-\barrho,\baru_{\rm ref}-\baru)\|_{L^1_r}\Big)
		\times C \exp(CT/\kappa) .
	\end{multline}
\end{Prop}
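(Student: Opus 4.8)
The plan is to interpose the refined approximate solution $(h_{\rm app},u_{\rm app})$ of Section~\ref{S.approx-hydro}, associated with the reference solution $h_{\rm ref}$ and with the profile $(\barrho,\baru)$, emerging from the \emph{same} initial data $(h_{\rm app},u_{\rm app})\vert_{t=0}=(h^0,u^0)$ as $(h,u)$, and to use
\[\|(h_{\rm ref}-h,u_{\rm ref}-u)(\cdot,r)\|_{L^\infty_T H^s_x}\le \|(h_{\rm ref}-h_{\rm app},u_{\rm ref}-u_{\rm app})(\cdot,r)\|_{L^\infty_T H^s_x}+\|(h_{\rm app}-h,u_{\rm app}-u)(\cdot,r)\|_{L^\infty_T H^s_x}.\]
First I would fix $T$ (independent of $\kappa$) by applying Proposition~\ref{P.control-app} at the regularity levels $s+1$ and $s+2$; the hypotheses of the present proposition ($\|\partial_x h_{\rm ref}\|_{L^1_T L^1_r H^{s+2}_x}\le M_{\rm ref}$, $\|(h^0,\kappa^{1/2}\partial_x h^0)\|_{L^\infty_r H^{s+1}_x}+\|u^0\|_{L^\infty_r H^{s+2}_x}\le M_0$, $1+h^0\ge\varsigma$) are exactly what is needed. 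This produces $(h_{\rm app},u_{\rm app})$ on $[0,T]$ with a control \emph{uniform in} $\kappa\in(0,1]$: $u_{\rm app}$ bounded in $L^\infty_r H^{s+2}_x$, $h_{\rm app}$ in $L^\infty_r H^{s+1}_x$, $\kappa^{1/2}h_{\rm app}$ in $L^\infty_r H^{s+2}_x$ and $\kappa\partial_x h_{\rm app}$ in $L^2_r H^{s+2}_x$, together with the lower bound $1+h_{\rm app}\ge \varsigma/c$. We take $T$ no larger than this time and than the lifespan assumed for $(h_{\rm ref},u_{\rm ref})$.

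\textbf{Reference minus approximate.} Proposition~\ref{P.convergence-app-bl}, applied with profiles $(\barrho_{\rm ref},\baru_{\rm ref})$ and $(\barrho,\baru)$ at the regularity level $s+1$ (legitimate since $(h_{\rm ref},u_{\rm ref})\in L^\infty_T L^\infty_r H^{s+2}_x$ and $(h_{\rm app},u_{\rm app})$ enjoys the controls above), gives for a.e.\ $r$
\[\|(h_{\rm ref}-h_{\rm app},u_{\rm ref}-u_{\rm app})(\cdot,r)\|_{L^\infty_T H^{s+1}_x}\le \Big(\|(h_{\rm ref}\vert_{t=0}-h^0,u_{\rm ref}\vert_{t=0}-u^0)(\cdot,r)\|_{H^{s+1}_x}+C\kappa\big(|(\dot\barrho,\dot\baru)(r)|+\|\dot\barrho\|_{L^1_r}\big)\Big)\exp(CT/\kappa),\]
with $\dot\barrho:=\barrho_{\rm ref}-\barrho$, $\dot\baru:=\baru_{\rm ref}-\baru$; the same bound at level $s$ controls the first summand of the triangle inequality. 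Integrating the level-$(s+1)$ bound in $r$ and inserting it into Proposition~\ref{P.consistency-app}, the consistency remainder $r_{\rm rem}$ of $(h_{\rm app},u_{\rm app})$ as an approximate solution of~\eqref{eq.hydro-CV}-\eqref{eq.def-mont-CV} with profile $(\barrho,\baru)$ obeys
\[\sup_{[0,T]}\|r_{\rm rem}(t,\cdot)\|_{L^\infty_r H^s_x}\le \barM^2\sup_{[0,T]}\|(h_{\rm ref}-h_{\rm app})(t,\cdot)\|_{L^1_r H^{s+1}_x}\le \Big(\|(h_{\rm ref}\vert_{t=0}-h^0,u_{\rm ref}\vert_{t=0}-u^0)\|_{L^1_r H^{s+1}_x}+C\|(\dot\barrho,\dot\baru)\|_{L^1_r}\Big)\exp(CT/\kappa).\]

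\textbf{Approximate minus exact, and the continuity argument.} Since $(h,u)$ solves~\eqref{eq.hydro-CV}-\eqref{eq.def-mont-CV} exactly with profile $(\barrho,\baru)$ while $(h_{\rm app},u_{\rm app})$ solves it with the same profile up to the source $r_{\rm rem}$ in its velocity equation, their difference satisfies the linearized system of the proof of Proposition~\ref{P.stability-hydro} with \emph{vanishing} profile difference, \emph{vanishing} initial data (both start from $(h^0,u^0)$), and the \emph{extra source} $r_{\rm rem}$. I would re-run that proof with $(h_{\rm app},u_{\rm app})$ in the role of the solution whose $x$-derivatives enter the source terms as coefficients --- it is $\kappa$-uniformly controlled by the first step --- and $(h,u)$ in the role of the transported background, keeping the $L^1_r$-integrated estimate and the pointwise-in-$r$ estimate in tandem as in Proposition~\ref{P.stability-hydro} (the $\frac1{\barrho}\M[\barrho]\partial_x(h_{\rm app}-h)$ term forces this). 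The resulting estimate has lifespan $\sim\kappa$; iterating it over $\lceil CT/\kappa\rceil$ consecutive subintervals of length $\sim\kappa$, exactly as in the proof of Proposition~\ref{P.convergence-app-bl}, yields for a.e.\ $r$
\[\|(h_{\rm app}-h,u_{\rm app}-u)(\cdot,r)\|_{L^\infty_T H^s_x}\le C\Big(\sup_{[0,T]}\|r_{\rm rem}\|_{L^\infty_r H^s_x}\Big)\exp(CT/\kappa),\]
\emph{provided} $(h,u)$ remains defined on $[0,T]$ with $(h,u)$ and $\kappa^{1/2}\partial_x h$ controlled in the norms Proposition~\ref{P.stability-hydro} requires. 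This proviso is secured by a bootstrap on the maximal subinterval of $[0,T]$ on which $(h,u)$ exists and $\|(h-h_{\rm app},u-u_{\rm app})(\cdot,r)\|_{L^\infty_r H^s_x}$ together with the companion $\kappa^{1/2}\partial_x$ quantity stays below a fixed threshold: there $(h,u)$ inherits a $\kappa$-uniform $L^\infty_r H^s_x$ bound and a uniform positive lower bound on $1+h$ from $(h_{\rm app},u_{\rm app})$, hence by the blow-up criterion and continuation statement of Proposition~\ref{P.WP-small-time-hydro} it extends; choosing $\delta_0=\delta_0(\kappa)$ small enough that the right-hand sides of the displays above (all carrying the factor $\exp(CT/\kappa)$ and tending to $0$ with $\delta_0$) stay under the threshold closes the loop and shows $(h,u)$ is defined on all of $[0,T]$. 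Adding the two contributions and using $\kappa\le 1$ to absorb the factors $\kappa$ gives~\eqref{eq.est-convergence}.

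\textbf{Main obstacle.} The crux is the continuity argument of the last step: it must simultaneously produce the a priori control of $(h,u)$ needed to invoke Proposition~\ref{P.stability-hydro} and to absorb $r_{\rm rem}$, while the $O(\kappa)$ lifespan of that stability estimate forces an iteration whose geometric growth is responsible for the $\exp(CT/\kappa)$ factor --- affordable only because $\delta_0$ is allowed to depend on $\kappa$. One must also keep the regularity budget straight: the ``$+2$'' in the hypotheses absorbs two successive derivative losses, one from the $\partial_x$ appearing in the consistency remainder $r_{\rm rem}$ and one from Proposition~\ref{P.convergence-app-bl}, which estimates a difference only one Sobolev order below the regularity assumed on the reference solution; the derivative-in-coefficients loss inherent to Proposition~\ref{P.stability-hydro} is then covered by the same budget through Proposition~\ref{P.control-app} at level $s+2$.
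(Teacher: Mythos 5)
Your proposal is correct and follows essentially the same route as the paper: the same triangle-inequality decomposition through the refined approximate solution issued from the data $(h^0,u^0)$, the same use of Propositions~\ref{P.control-app}, \ref{P.convergence-app-bl} and~\ref{P.consistency-app}, the same adaptation of the stability estimate of Proposition~\ref{P.stability-hydro} iterated over $O(\kappa)$ subintervals to produce the $\exp(CT/\kappa)$ factor, and the same continuity/bootstrap argument with $\delta_0$ chosen small depending on $\kappa$. The accounting of the regularity budget (the two successive losses absorbed by the ``$+2$'') also matches the paper's proof.
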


\begin{proof}
	We first recall that Proposition~\ref{P.convergence-app-bl} provides an estimate on the difference between the reference solution $(h_{\rm ref},u_{\rm ref})\in C([0,T];L^\infty((-1,0);  H^{s+2}(\R)^2)$ and the corresponding approximate solution defined as the solution to the system~\eqref{eq.hydro-app} with initial data $(h_{\rm app},u_{\rm app})\vert_{t=0}= (h^0, u^0)$, $(h_{\rm app},u_{\rm app})\in C([0,T]; L^\infty((-1,0); H^{s+2}_x(\R)^2))$, whose existence and control on the time interval $[0,T]$ (lessening $T$ if necessary) is provided by Proposition~\ref{P.control-app}. 	Specifically we have for almost any $r\in (-1,0)$
	\begin{multline*}\| (h_{\rm ref}-h_{\rm app},u_{\rm ref}-u_{\rm app})(\cdot,r)\|_{L^\infty_T H^{s+1}_x}   
		\leq \Big( \| (h_{\rm ref}-h_{\rm app},u_{\rm ref}-u_{\rm app})(t=0,\cdot,r)\|_{ H^{s+1}_x}\\ + C \kappa \, \big(|(\barrho_{\rm ref}-\barrho,\baru_{\rm ref}-\baru)(r)|+\| (\barrho_{\rm ref}-\barrho)\|_{L^1_r}\big) \Big) \exp(CT/\kappa),
	\end{multline*}	
	where  $C$ depends only on $s,\varsigma,\barM,M_{\rm ref}$ and $M_0$.

	We then consider the difference between the exact solution $(h,u)\in C([0,T^\star); L^\infty((-1,0); H^{s+2}_x(\R)^2))$ ---whose existence is provided by Proposition~\ref{P.WP-small-time-hydro}--- and the approximate solution. 
	By means of the consistency result, Proposition~\ref{P.consistency-app}, we can adapt the proof of Proposition~\ref{P.stability-hydro}
	and we find that
	under the assumptions that
	\begin{equation}\label{eq.condition}
		\| (h,u)\|_{L^\infty_T L^\infty_r H^{s}_x}+ \kappa^{1/2}\, \|\partial_x h\|_{L^\infty_r L^2_T  H^s_x}\leq M\quad \text{ and }\quad \essinf_{(t,x,r)\in [0,T]\times \R\times (-1,0)} 1+h(t,x,r)\geq \varsigma
	\end{equation}
	 there exists $C>0$ depending only on $s,\varsigma,\barM,M_{\rm ref},M$ such that for all $\kappa\in(0,1]$ and $t\in(0,\min(\{T,T^\star\}))$ such that $  C t\leq \kappa $ and for almost any $r\in(-1,0)$ one has
	\begin{multline*}
	\max(\{	\| (h-h_{\rm app},u-u_{\rm app})(\cdot,r)\|_{L^\infty_t  H^s_x} ,\kappa^{1/2}\|  \partial_x (h-h_{\rm app})(\cdot,r)\|_{L^2_t  H^s_x}\})  \\
	\leq 2 \| (h-h_{\rm app},u-u_{\rm app})(t=0,\cdot,r)\|_{L^\infty_r H^s_x} 
		+ C t \, \| (h_{\rm ref}-h_{\rm app})(t,\cdot)\|_{L^1_r H^{s+1}_x} .
	\end{multline*}
	Iterating the stability estimate on $T_n:=\min(\{nT_0,T\})$ we deduce that (augmenting $C$ if necessary) 
	\begin{multline*}
		\max(\{\| (h-h_{\rm app},u-u_{\rm app})\|_{L^\infty_t L^\infty_r H^s_x},\kappa^{1/2}\|  \partial_x (h-h_{\rm app})\|_{L^\infty_r L^2_t  H^s_x}\}) \\
		\leq \exp(CT/\kappa)  \| (h-h_{\rm app},u-u_{\rm app})(t=0,\cdot)\|_{L^\infty_r H^s_x}   
		+ C \kappa  \exp(CT/\kappa) \, \| h_{\rm ref}-h_{\rm app}\|_{L^\infty_T L^1_r H^{s+1}_x} .
	\end{multline*}
	Since by construction $(h_{\rm app},u_{\rm app})\vert_{t=0}=(h,u)\vert_{t=0}$ and using the above control on $  h_{\rm ref}- h_{\rm app}$ we infer (augmenting $C$ if necessary)
	\begin{multline*}\max(\{	\| (h-h_{\rm app},u-u_{\rm app})\|_{L^\infty_t L^\infty_r H^s_x} ,\kappa^{1/2}\|  \partial_x (h-h_{\rm app})\|_{L^\infty_r L^2_t  H^s_x}\})\\
	\leq  C \kappa  \exp(CT/\kappa) \,  \| (h_{\rm ref}-h,u_{\rm ref}-u)(t=0,\cdot)\|_{ L^1_r H^{s+1}_x}  +
	C \big(\kappa  \exp(CT/\kappa)\big)^2  \, \|(\barrho_{\rm ref}-\barrho,\baru_{\rm ref}-\baru)\|_{L^1_r}.
\end{multline*}
	Notice that this estimate implies by triangle inequality an upper bound on $\|(h,u)\|_{L^\infty_t L^\infty_r H^{s}_x}+ \kappa^{1/2}\, \|\partial_x h\|_{L^\infty_r L^2_T  H^s_x}$ which (choosing $M$ sufficiently large and $\delta_0$ sufficiently small) enforces strictly the condition~\eqref{eq.condition}, so that by the standard continuity argument the above holds without restriction on $t\in(0,\min(\{T,T^\star\}))$. By the persistence of regularity stated in Proposition~\ref{P.WP-small-time-hydro} we infer that $T^\star>T$ and the above holds for any $t\in(0,T]$. 
	The estimate~\eqref{eq.est-convergence} then immediately follows from the triangle inequality. 
\end{proof}

\section{Conclusion}\label{S.conclusion}

Thanks to Proposition~\ref{P.uniform-WP} (concerning the well-posedness and control of solutions to the bilayer system) on one hand and Proposition~\ref{P.convergence-hydro} (concerning the control of the deviations of nearby solutions to some given reference solutions) on the other hand, one infers immediately the announced rigorous justification of the propagation in time of the columnar motion and sharp stratification assumptions in near-bilayer situations (within the hydrostatic framework). Specifically, we have the following result.

\begin{Thm}\label{T.CV} 
	Let $s\geq s_0>3/2$, $\barM>0$ ,$M_0>0$, $\varsigma\in(0,1)$, and $\kappa\in(0,1]$. Then there exist $C>0,T>0$ independent of $\kappa$ and $\delta_0>0$ (depending on $\kappa$)  such that the following holds.
	
	Let $(\barrho,\baru)\in L^\infty((-1,0))^2$ and $(h^0,u^0)\in L^\infty((-1,0);   H^{s+2}(\R)^2)$ be such that 
	\[ \| (\barrho,\tfrac1{\barrho})\|_{L^\infty_r }\leq \barM, \quad  \|  (h^0,\kappa^{1/2}\partial_x h^0)\|_{L^\infty_r H^{s+1}_x}+\| u^0\|_{L^\infty_r H^{s+2}_x}\leq M_0\]
	and there exists $(\rho_s,\rho_b, \barH_s,\barH_b,\barU_s,\barU_b)\in\R^6$ such that $\barH_s+\barH_b=1$ and $\barU_s+\barU_b=0$ as well as $( H_s^0, H_b^0, U_s^0, U_b^0)\in H^{s+4}(\R)^2\times H^{s+3}(\R)^2$ such that 
	the hyperbolicity condition holds:
	\[ \forall x\in\R,\quad ( \rho_s,\rho_b,\barH_s+ H_s^0(x), \barH_b + H_b^0(x), \barU_s+ U_s^0(x),\barU_b+ U_b^0(x)) \in \mathfrak p^\varsigma,
	\]
	where $\mathfrak p^\varsigma$ is defined in~\eqref{eq.hyperbolicity-condition}, and
	\[ \| ( H_s^0, H_b^0, U_s^0, U_b^0,\kappa\partial_x H_s^0,\kappa\partial_x H_b^0)\|_{H^{s+3}}\leq M_0\]
	and such that denoting $(\barrho_{\rm bl}^0,\baru_{\rm bl}^0,h_{\rm bl}^0,u_{\rm bl}^0)$ through~\eqref{eq.bilayer-to-continuous-intro} we have
	\[  \| (h_{\rm bl}^0-h^0,u_{\rm bl}^0-u^0)\|_{L^1_r H^{s+1}_x}+\| (\barrho_{\rm bl}-\barrho,\baru_{\rm bl}-\baru)\|_{L^1_r} \leq \delta_0,\]
	then
	\begin{enumerate}
		\item there exists $( H_s,  H_b, U_s, U_b)\in C([0,T];H^{s+3}(\R)^4)$ solution to~\eqref{eq.SV2-intro} emerging from the initial data \[(H_s,H_b,U_s,U_b)\big\vert_{t=0}=( H_s^0,  H_b^0,  U_s^0, U_b^0);\]
		\item there exists  $(h,u)\in C([0,T];L^\infty((-1,0);  H^{s+2}(\R)^2)$ solution to ~\eqref{eq.hydro-intro} emerging from initial data \[(h,u)\vert_{t=0}= (h^0, u^0);\] 
		\item denoting $(\barrho_{\rm bl},\baru_{\rm bl},h_{\rm bl},u_{\rm bl})$ through~\eqref{eq.bilayer-to-continuous-intro} we have for almost all $r\in (-1,0)$,
	\begin{multline*}\| (h_{\rm bl}-h,u_{\rm bl}-u)(\cdot,r)\|_{L^\infty_T  H^{s}_x}  
		\leq \Big(\| (h_{\rm bl}^0-h^0,u_{\rm bl}^0-u^0)(\cdot,r)\|_{ H^{s}_x}+ \| (h_{\rm bl}^0-h^0,u_{\rm bl}^0-u^0)\|_{ L^1_r H^{s+1}_x}\\
		+|(\barrho_{\rm bl}-\barrho,\baru_{\rm bl}-\baru)(r)|+\|(\barrho_{\rm bl}-\barrho,\baru_{\rm bl}-\baru)\|_{L^1_r}\Big)
		\times C \exp(Ct/\kappa) .
	\end{multline*}
\end{enumerate}
	
\end{Thm}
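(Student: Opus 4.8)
The plan is to obtain Theorem~\ref{T.CV} as a synthesis of Proposition~\ref{P.uniform-WP} ---which produces the bilayer solution on a time interval uniform in $\kappa$--- and Proposition~\ref{P.convergence-hydro} ---which controls deviations of nearby solutions of the hydrostatic Euler equations around a \emph{given} reference solution--- the bridge between the two being the fact recalled in the introduction that, through~\eqref{eq.bilayer-to-continuous-intro}, a solution of the bilayer system induces an exact solution of~\eqref{eq.hydro-intro}-\eqref{eq.def-mont-intro}. Concretely, I would first apply Proposition~\ref{P.uniform-WP} with regularity index $s+3$ in place of $s$ to the data $(H_s^0,H_b^0,U_s^0,U_b^0)\in H^{s+4}(\R)^2\times H^{s+3}(\R)^2$: since $\|(H_s^0,H_b^0,U_s^0,U_b^0,\kappa\partial_x H_s^0,\kappa\partial_x H_b^0)\|_{H^{s+3}}\le M_0$ and the hyperbolicity condition $\mathfrak p^\varsigma$ holds pointwise in $x$, this furnishes $T_1,C_1>0$ depending only on $s,s_0,\varsigma,M_0$ and a solution $(H_s,H_b,U_s,U_b)\in C([0,T_1];H^{s+3}(\R)^4)$ to the diffusive bilayer system~\eqref{eq.SV2-intro} with $\|(H_s,H_b,U_s,U_b,\kappa\partial_x H_s,\kappa\partial_x H_b)\|_{L^\infty_{T_1}H^{s+3}}\le C_1M_0$; this is already the first conclusion. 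I would then set $(\barrho_{\rm ref},\baru_{\rm ref},h_{\rm ref},u_{\rm ref}):=(\barrho_{\rm bl},\baru_{\rm bl},h_{\rm bl},u_{\rm bl})$ defined from this solution through~\eqref{eq.bilayer-to-continuous-intro}, which solves~\eqref{eq.hydro-ref}.

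The core step is to check that this reference solution is admissible for Proposition~\ref{P.convergence-hydro} with constants independent of $\kappa\in(0,1]$. I would observe that, since $\mathfrak p^\varsigma$ holds for \emph{every} $x\in\R$ while $H_s^0,H_b^0\in H^{s+4}(\R)\hookrightarrow C_0(\R)$ and $\barH_s+\barH_b=1$, letting $|x|\to\infty$ gives $\barH_s/\barH_b\in[\varsigma,\varsigma^{-1}]$, hence $\barH_s,\barH_b\in[\tfrac{\varsigma}{1+\varsigma},1]$. Consequently the profiles~\eqref{eq.bilayer-to-continuous-intro}, which are piecewise constant in $r$ with coefficients $H_s/\barH_s,H_b/\barH_b,U_s,U_b$, inherit from the $H^{s+3}$ bound above the estimates $\|(h_{\rm ref},u_{\rm ref})\|_{L^\infty_{T_1}L^\infty_r H^{s+2}_x}\le \tfrac{1+\varsigma}{\varsigma}C_1M_0$ and $\|\partial_x h_{\rm ref}(t,\cdot)\|_{L^1_r H^{s+2}_x}=\|\partial_x H_s(t)\|_{H^{s+2}}+\|\partial_x H_b(t)\|_{H^{s+2}}\le C_1M_0$, so that $\|\partial_x h_{\rm ref}\|_{L^1_{T_1}L^1_r H^{s+2}_x}\le T_1C_1M_0$; this pins down an admissible $M_{\rm ref}=M_{\rm ref}(s,\varsigma,M_0)$. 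I would also note that $1+h_{\rm ref}$ is bounded below on $[0,T_1]$ by the propagated condition $\mathfrak p^{\varsigma/2}$, and that provided $\delta_0\le c(\barM,\varsigma)$ the closeness $\|\barrho_{\rm bl}-\barrho\|_{L^1_r}\le\delta_0$, combined with $\barrho\in[\barM^{-1},\barM]$ and $\barH_s,\barH_b\ge\tfrac{\varsigma}{1+\varsigma}$, forces $\rho_s,\rho_b\in[(2\barM)^{-1},2\barM]$ and hence $\|(\barrho_{\rm bl},\tfrac1{\barrho_{\rm bl}})\|_{L^\infty_r}\le 2\barM$. Finally, by~\eqref{eq.bilayer-to-continuous-intro} one has $h_{\rm ref}\vert_{t=0}=h_{\rm bl}^0$, $u_{\rm ref}\vert_{t=0}=u_{\rm bl}^0$, $\barrho_{\rm ref}=\barrho_{\rm bl}$ and $\baru_{\rm ref}=\baru_{\rm bl}$, so the deviation-smallness hypothesis of Proposition~\ref{P.convergence-hydro} is, verbatim, the one assumed in Theorem~\ref{T.CV}.

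It then remains to invoke Proposition~\ref{P.convergence-hydro} with this reference solution (with $\barM$ replaced by $2\barM$ and with the above $M_{\rm ref}$), the profiles $(\barrho,\baru)$ and the data $(h^0,u^0)$: it provides $C,T_2>0$ independent of $\kappa$ and $\delta_0'>0$ depending on $\kappa$ such that the maximal solution $(h,u)$ of~\eqref{eq.hydro-intro} emerging from $(h^0,u^0)$ is defined on $[0,T_2]$ ---the second conclusion--- and satisfies~\eqref{eq.est-convergence}, which, after substituting $h_{\rm ref}=h_{\rm bl}$, $h_{\rm ref}\vert_{t=0}=h_{\rm bl}^0$ and likewise for the other unknowns, is exactly the third conclusion. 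Taking $T:=\min(T_1,T_2)$ (so that the bilayer reference is defined on $[0,T]$) and $\delta_0:=\min(\delta_0',c(\barM,\varsigma))$ concludes the proof, with $C,T$ independent of $\kappa$ and $\delta_0$ depending on $\kappa$. I expect the only real obstacle to be the middle step: there is no new analytic input, but one must verify that the bilayer solution, reread as a continuously stratified one, carries enough horizontal regularity ---this is why extra regularity ($H^{s+4}\times H^{s+3}$, against $H^s$ in the final estimate) is assumed on the bilayer data--- and stays uniformly away from vacuum for $\kappa\in(0,1]$, and that the two thresholds imposed on $\delta_0$ (one to bound $\barrho_{\rm bl}$ from below, one coming from Proposition~\ref{P.convergence-hydro}) are mutually compatible.
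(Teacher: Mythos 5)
Your proposal is correct and follows exactly the route the paper takes: the paper's "proof" of Theorem~\ref{T.CV} consists precisely of the one-line observation that Proposition~\ref{P.uniform-WP} (applied at regularity $s+3$) supplies the bilayer solution, which via~\eqref{eq.bilayer-to-continuous-intro} becomes the reference solution fed into Proposition~\ref{P.convergence-hydro}. Your intermediate verifications (lower bounds on $\barH_s,\barH_b$ from the hyperbolicity condition at spatial infinity, the resulting $L^\infty_r H^{s+2}_x$ and $L^1_T L^1_r H^{s+2}_x$ bounds on the piecewise-constant profiles, and the bound on $\rho_s,\rho_b$ forced by $\|\barrho_{\rm bl}-\barrho\|_{L^1_r}\le\delta_0$) correctly fill in the details the paper leaves implicit.
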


\begin{Rmk} Proposition~\ref{P.convergence-hydro} is not limited to the bilayer framework and applies to {\em any} suitably regular reference solution. Hence it can be combined with results analogous to Proposition~\ref{P.uniform-WP} to provide results analogous to Theorem~\ref{T.CV} in the one-layer and multilayer frameworks.
	
The result analogous to Proposition~\ref{P.uniform-WP} in the one-layer framework (that is associated with the standard shallow water equation with thickness diffusivity that was discussed for instance in~\cite{Gent93}) is stated and proved in~\cite[Sect.~2.2]{AdimPhD}. Notice it requires neither the discussion on the hyperbolicity domain of the non-diffusive equations (since the standard non-cavitation assumption guarantees hyperbolicity) nor the discussion on the parabolic regularization of the total velocity (since the natural symmetrizer behaves well with diffusivity contributions).

A result analogous to Proposition~\ref{P.uniform-WP} in the multilayer framework follows from combining the result of~\cite{Duchene13} with the analysis of Section~\ref{S.SV2-diffusive}. In the former, it is proved that assuming sufficiently small shear velocities is a sufficient condition for the (strict) hyperbolicity of the multilayer system in the stably stratified situation. Notice however that this smallness condition is implicit, and not uniform with respect to the number of layers. 
\end{Rmk}

\paragraph{Acknowledgements} The authors thank the Institute for applied mathematics ``Mauro Picone'' (IAC) for its hospitality when this work was initiated, and Laboratoire Ypatia des Sciences Mathématiques LYSM CNRS-INdAM International Research Laboratory as well as Univ. Rennes funding for its financial support. MA and VD thank the program Centre Henri Lebesgue ANR-11-LABX-0020-0, for fostering an attractive mathematical environment. RB acknowledges fundings from the Italian Ministry of University and Research, PRIN 2022HSSYPN (Teseo).


\begin{thebibliography}{10}
	
	\bibitem{Adim}
	M.~Adim.
	\newblock Approximating a continuously stratified hydrostatic system by the
	multi-layer shallow water system.
	\newblock arXiv preprint: \href{https://arxiv.org/abs/2307.11426}{2307.11426}.
	
	\bibitem{AdimPhD}
	M.~Adim.
	\newblock {\em Modèles continument stratifiés et systèmes multi-couches pour
		les écoulements géophysiques}.
	\newblock PhD thesis, Univ. Rennes, 2024.
	
	\bibitem{AlmgrenCamassaTiron12}
	A.~Almgren, R.~Camassa, and R.~Tiron.
	\newblock Shear instability of internal solitary waves in {E}uler fluids with
	thin pycnoclines.
	\newblock {\em J. Fluid Mech.}, 710:324--361, 2012.
	
	\bibitem{AmickTurner86}
	C.~J. Amick and R.~E.~L. Turner.
	\newblock A global theory of internal solitary waves in two-fluid systems.
	\newblock {\em Trans. Amer. Math. Soc.}, 298(2):431--484, 1986.
	
	\bibitem{AzeradGuillen2001}
	P.~Az\'{e}rad and F.~Guill\'{e}n.
	\newblock Mathematical justification of the hydrostatic approximation in the
	primitive equations of geophysical fluid dynamics.
	\newblock {\em SIAM J. Math. Anal.}, 33(4):847--859, 2001.
	
	\bibitem{BahouriCheminDanchin11}
	H.~Bahouri, J.-Y. Chemin, and R.~Danchin.
	\newblock {\em Fourier analysis and nonlinear partial differential equations},
	volume 343.
	\newblock Springer, 2011.
	
	\bibitem{BarrosChoi08}
	R.~Barros and W.~Choi.
	\newblock On the hyperbolicity of two-layer flows.
	\newblock In {\em Frontiers of applied and computational mathematics}, pages
	95--103. World Sci. Publ., Hackensack, NJ, 2008.
	
	\bibitem{Benzoni-GavageSerre07}
	S.~Benzoni-Gavage and D.~Serre.
	\newblock {\em Multidimensional hyperbolic partial differential equations.
		{F}irst-order systems and applications}.
	\newblock Oxford Mathematical Monographs. The Clarendon Press Oxford University
	Press, Oxford, 2007.
	
	\bibitem{BianchiniDuchene}
	R.~Bianchini and V.~Duchêne.
	\newblock On the hydrostatic limit of stably stratified fluids with isopycnal
	diffusivity.
	\newblock To appear in Comm. Partial Differential Equations. arXiv
	preprint: \href{https://arxiv.org/abs/2206.01058}{2206.01058}.
	
	\bibitem{BoguckiGarrett93}
	D.~Bogucki and C.~Garrett.
	\newblock A simple model for the shear-induced decay of an internal solitary
	wave.
	\newblock {\em J. Phys. Oceanogr.}, 23(8):1767--1776, 1993.
	
	\bibitem{BonaLannesSaut08}
	J.~L. Bona, D.~Lannes, and J.-C. Saut.
	\newblock Asymptotic models for internal waves.
	\newblock {\em J. Math. Pures Appl. (9)}, 89(6):538--566, 2008.
	
	\bibitem{Brenier99}
	Y.~Brenier.
	\newblock Homogeneous hydrostatic flows with convex velocity profiles.
	\newblock {\em Nonlinearity}, 12(3):495--512, 1999.
	
	\bibitem{BreschDesjardins03}
	D.~Bresch and B.~Desjardins.
	\newblock Existence of global weak solutions for a 2{D} viscous shallow water
	equations and convergence to the quasi-geostrophic model.
	\newblock {\em Comm. Math. Phys.}, 238(1-2):211--223, 2003.
	
	\bibitem{BreschDesjardins04}
	D.~Bresch and B.~Desjardins.
	\newblock Some diffusive capillary models of {Korteweg} type.
	\newblock {\em C. R., M{\'e}c., Acad. Sci. Paris}, 332(11):881--886, 2004.
	
	\bibitem{BreschDesjardinsLin03}
	D.~Bresch, B.~Desjardins, and C.-K. Lin.
	\newblock On some compressible fluid models: {K}orteweg, lubrication, and
	shallow water systems.
	\newblock {\em Comm. Partial Differential Equations}, 28(3-4):843--868, 2003.
	
	\bibitem{BreschDesjardinsZatorska15}
	D.~Bresch, B.~Desjardins, and E.~Zatorska.
	\newblock Two-velocity hydrodynamics in fluid mechanics: {P}art {II}.
	{E}xistence of global {$\kappa$}-entropy solutions to the compressible
	{N}avier-{S}tokes systems with degenerate viscosities.
	\newblock {\em J. Math. Pures Appl. (9)}, 104(4):801--836, 2015.
	
	\bibitem{BreschNoble07}
	D.~Bresch and P.~Noble.
	\newblock Mathematical justification of a shallow water model.
	\newblock {\em Methods Appl. Anal.}, 14(2):87--117, 2007.
	
	\bibitem{CamassaTiron11}
	R.~Camassa and R.~Tiron.
	\newblock Optimal two-layer approximation for continuous density
	stratification.
	\newblock {\em J. Fluid Mech.}, 669:32--54, 2011.
	
	\bibitem{DesjardinsLannesSaut21}
	B.~Desjardins, D.~Lannes, and J.-C. Saut.
	\newblock Normal mode decomposition and dispersive and nonlinear mixing in
	stratified fluids.
	\newblock {\em Water Waves}, 3(1):153--192, 2021.
	
	\bibitem{MM4WW}
	V.~Duch{\^e}ne.
	\newblock {Many Models for Water Waves}.
	\newblock Open Math Notes,
	\href{https://www.ams.org/open-math-notes/omn-view-listing?listingId=111309}{OMN:202109.111309},
	2021.
	
	\bibitem{Duchene10}
	V.~Duchêne.
	\newblock Asymptotic shallow water models for internal waves in a two-fluid
	system with a free surface.
	\newblock {\em SIAM J. Math. Anal.}, 42(5):2229--2260, 2010.
	
	\bibitem{Duchene13}
	V.~Duchêne.
	\newblock A note on the well-posedness of the one-dimensional multilayer
	shallow water model.
	\newblock HaL preprint: \href{http://hal.archives-ouvertes.fr/hal-00922045}{00922045}, 2013.
	
	\bibitem{FurukawaGigaHieberEtAl20}
	K.~Furukawa, Y.~Giga, M.~Hieber, A.~Hussein, T.~Kashiwabara, and M.~Wrona.
	\newblock Rigorous justification of the hydrostatic approximation for the
	primitive equations by scaled {N}avier-{S}tokes equations.
	\newblock {\em Nonlinearity}, 33(12):6502--6516, 2020.
	
	\bibitem{Gent93}
	P.~R. Gent.
	\newblock The energetically consistent shallow-water equations.
	\newblock {\em Journal of the atmospheric sciences}, 50(9):1323--1325, 1993.
	
	\bibitem{GentMcWilliams90}
	P.~R. Gent and J.~C. McWilliams.
	\newblock Isopycnal mixing in ocean circulation models.
	\newblock {\em J. Phys. Oceanogr.}, 20(1):150--155, 1990.
	
	\bibitem{GerbeauPerthame01}
	J.-F. Gerbeau and B.~Perthame.
	\newblock Derivation of viscous {S}aint-{V}enant system for laminar shallow
	water; numerical validation.
	\newblock {\em Discrete Contin. Dyn. Syst. Ser. B}, 1(1):89--102, 2001.
	
	\bibitem{Gill82}
	A.~E. Gill.
	\newblock {\em Atmosphere-ocean dynamics}, volume~30 of {\em International
		geophysics series}.
	\newblock Academic Press, 1982.
	
	\bibitem{Grenier99}
	E.~Grenier.
	\newblock On the derivation of homogeneous hydrostatic equations.
	\newblock {\em M2AN Math. Model. Numer. Anal.}, 33(5):965--970, 1999.
	
	\bibitem{GrueJensenRusaasEtAl99}
	J.~Grue, A.~Jensen, P.-O. Rus{\aa}s, and J.~K. Sveen.
	\newblock Properties of large-amplitude internal waves.
	\newblock {\em J. Fluid Mech.}, 380:257--278, 1999.
	
	\bibitem{Howard61}
	L.~N. Howard.
	\newblock Note on a paper of {J}ohn {W}. {M}iles.
	\newblock {\em J. Fluid Mech.}, 10:509--512, 1961.
	
	\bibitem{James01}
	G.~James.
	\newblock Internal travelling waves in the limit of a discontinuously
	stratified fluid.
	\newblock {\em Arch. Ration. Mech. Anal.}, 160(1):41--90, 2001.
	
	\bibitem{KamotskiLebeau05}
	V.~Kamotski and G.~Lebeau.
	\newblock On 2{D} {R}ayleigh-{T}aylor instabilities.
	\newblock {\em Asymptot. Anal.}, 42(1-2):1--27, 2005.
	
	\bibitem{Kato95}
	T.~Kato.
	\newblock {\em Perturbation theory for linear operators}.
	\newblock Classics in Mathematics. Springer-Verlag, Berlin, 1995.
	\newblock Reprint of the 1980 edition.
	
	\bibitem{Lannes13}
	D.~Lannes.
	\newblock A {S}tability {C}riterion for {T}wo-{F}luid {I}nterfaces and
	{A}pplications.
	\newblock {\em Arch. Ration. Mech. Anal.}, 208(2):481--567, 2013.
	
	\bibitem{Lannes}
	D.~Lannes.
	\newblock {\em The water waves problem}, volume 188 of {\em Mathematical
		Surveys and Monographs}.
	\newblock American Mathematical Society, Providence, RI, 2013.
	\newblock Mathematical analysis and asymptotics.
	
	\bibitem{LiTiti2019}
	J.~Li and E.~S. Titi.
	\newblock The primitive equations as the small aspect ratio limit of the
	{N}avier-{S}tokes equations: rigorous justification of the hydrostatic
	approximation.
	\newblock {\em J. Math. Pures Appl. (9)}, 124:30--58, 2019.
	
	\bibitem{LiTitiYuan21}
	J.~Li, E.~S. Titi, and G.~Yuan.
	\newblock The primitive equations approximation of the anisotropic horizontally
	viscous 3{$D$} {N}avier-{S}tokes equations.
	\newblock {\em J. Differential Equations}, 306:492--524, 2022.
	
	\bibitem{MasmoudiWong12}
	N.~Masmoudi and T.~K. Wong.
	\newblock On the {$H^s$} theory of hydrostatic {E}uler equations.
	\newblock {\em Arch. Ration. Mech. Anal.}, 204(1):231--271, 2012.
	
	\bibitem{Miles61}
	J.~W. Miles.
	\newblock On the stability of heterogeneous shear flows.
	\newblock {\em J. Fluid Mech.}, 10:496--508, 1961.
	
	\bibitem{Ovsjannikov79}
	L.~V. Ovsjannikov.
	\newblock Models of two-layered ``shallow water''.
	\newblock {\em Zh. Prikl. Mekh. i Tekhn. Fiz.}, (2):3--14, 180, 1979.
	
	\bibitem{PuZhou22}
	X.~Pu and W.~Zhou.
	\newblock On the rigorous mathematical derivation for the viscous primitive
	equations with density stratification.
	\newblock {\em Acta Math. Sci. Ser. B (Engl. Ed.)}, 43(3):1081--1104, 2023.
	
	\bibitem{PuZhou21}
	X.~Pu and W.~Zhou.
	\newblock Rigorous derivation of the full primitive equations by the scaled
	{B}oussinesq equations with rotation.
	\newblock {\em Bull. Malays. Math. Sci. Soc.}, 46(3):Paper No. 88, 23, 2023.
	
	\bibitem{Teshukov92}
	V.~M. Teshukov.
	\newblock On {C}auchy problem for long wave equations.
	\newblock In {\em Free boundary problems in continuum mechanics ({N}ovosibirsk,
		1991)}, volume 106 of {\em Internat. Ser. Numer. Math.}, pages 331--338.
	Birkh\"{a}user, Basel, 1992.
	
	\bibitem{Turner81}
	R.~E.~L. Turner.
	\newblock Internal waves in fluids with rapidly varying density.
	\newblock {\em Ann. Scuola Norm. Sup. Pisa Cl. Sci. (4)}, 8(4):513--573, 1981.
	
	\bibitem{VirissimoMilewski20}
	F.~d.~M. Vir\'{\i}ssimo and P.~A. Milewski.
	\newblock Nonlinear stability of two-layer shallow water flows with a free
	surface.
	\newblock {\em Proc. A.}, 476(2236):20190594, 20, 2020.
	
	\bibitem{WhiteHelfrich14}
	B.~L. White and K.~R. Helfrich.
	\newblock A model for internal bores in continuous stratification.
	\newblock {\em J. Fluid Mech.}, 761:282--304, 2014.
	
\end{thebibliography}
\end{document}